\date{\today}
\newtheorem{theorem}{Theorem}[section]
\newtheorem{lemma}[theorem]{Lemma}
\newtheorem{corollary}[theorem]{Corollary}
\theoremstyle{definition}
\newtheorem{definition}[theorem]{Definition}
\newtheorem{remark}[theorem]{Remark}
\newcommand{\ot}{\otimes}
\newcommand{\co}{\circ}
\DeclareMathOperator{\hatast}{\,\hat{\ast}\,}
\DeclareMathOperator{\Hom}{Hom}
\title{Categorical isomorphisms for Hopf braces} 
\begin{document}
	
\maketitle
	
\begin{center}
	{\bf Jos\'e Manuel Fern\'andez Vilaboa$^{1}$, Ram\'on
	Gonz\'{a}lez Rodr\'{\i}guez$^{2}$ and Brais Ramos P\'erez$^{3}$}.
	\end{center}
	
	\vspace{0.4cm}
	\begin{center}
	{\small $^{1}$ [https://orcid.org/0000-0002-5995-7961].}
	\end{center}
	\begin{center}
	{\small  CITMAga, 15782 Santiago de Compostela, Spain.}
	\end{center}
	\begin{center}
	{\small  Universidade de Santiago de Compostela. Departamento de Matem\'aticas,  Facultade de Matem\'aticas, E-15771 Santiago de Compostela, Spain. 
	\\ email: josemanuel.fernandez@usc.es.}
	\end{center}
	\vspace{0.2cm}
	
	\begin{center}
	{\small $^{2}$ [https://orcid.org/0000-0003-3061-6685].}
	\end{center}
	\begin{center}
	{\small  CITMAga, 15782 Santiago de Compostela, Spain.}
	\end{center}
	\begin{center}
	{\small  Universidade de Vigo, Departamento de Matem\'{a}tica Aplicada II,  E. E. Telecomunicaci\'on,
	E-36310  Vigo, Spain.
	\\email: rgon@dma.uvigo.es}
	\end{center}

\vspace{0.2cm}

   \begin{center}
   	{\small $^{3}$ [https://orcid.org/0009-0006-3912-4483].}
   \end{center}
	\begin{center}
	{\small  CITMAga, 15782 Santiago de Compostela, Spain. \\}
	\end{center}
    \begin{center}
	{\small  Universidade de Santiago de Compostela. Departamento de Matem\'aticas,  Facultade de Matem\'aticas, E-15771 Santiago de Compostela, Spain. 
	\\email: braisramos.perez@usc.es}
	\end{center}
	\vspace{0.2cm}

\begin{abstract}  
In this paper, we introduce the category of brace triples in a braided monoidal setting and prove that it is isomorphic to the category of \texttt{s}-Hopf braces, which are a generalization of cocommutative Hopf braces. After that, we obtain a categorical isomorphism between finite cocommutative Hopf braces and a certain subcategory of cocommutative post-Hopf algebras, which suppose an expansion to the braided monoidal setting of the equivalence obtained by Y. Li, Y. Sheng and R. Tang in \cite{LST} for the category of vector spaces over a field $\mathbb{K}$.
\end{abstract} 

\vspace{0.2cm}

{\sc Keywords}: Braided monoidal category, Hopf algebra, Hopf brace, brace triple, post-Hopf algebra. 

{\sc MSC2020}: 18M05, 16T05.

\section*{Introduction}

Hopf braces were born in \cite{AGV} as the quantum version of skew braces, introduced by L. Guarnieri and L. Vendramin in \cite{GV}. The importance of these objects is fundamentally due to the fact that they provide solutions of the Quantum Yang-Baxter equation, that is a relevant subject in mathematical physics (see \cite{Yang} and \cite{Baxter}). Despite the simplicity of its formulation, finding solutions of the Yang-Baxter equation is not an easy task. In fact, the problem of classifying all the solutions of the equation is still open and different approaches have been proposed since the end of the last century. One of them was proposed by Drinfel'd in \cite{Dr1}, that consists on studying non-degenerate set-theoretical solutions. Research into this kind of solutions with the involutive property was what gave rise to the concept of brace introduced by Rump in \cite{Rump} for which skew braces are a generalization. So, a skew brace consists on two different group structures, $(G,.)$ and $(G,\star)$, satisfying the following compatibility condition 
\begin{equation}\label{ccskewbraces}g\star(h. t)=(g\star h). g^{-1}. (g\star t)\end{equation}
for all $g,h,t\in G$, where $g^{-1}$ denotes the inverse of $g$ with respect to the group structure $(G,.)$. These structures are useful to find non-degenerate solutions of the Yang-Baxter equation not neccesarily involutive. The linearization of skew braces give rise to the notion of Hopf brace defined by I. Angiono, C. Galindo and L. Vendramin in \cite{AGV}: If $(H,\epsilon,\Delta)$ is a coalgebra, a Hopf brace structure over $H$ consists on two different Hopf algebra structures,
$$
H_{1}=(H,1,\cdot,\epsilon,\Delta,\lambda),\;\;\; H_{2}=(H,1_{\circ},\circ,\epsilon,\Delta,S),
$$
where $\lambda$ and $S$ denote the antipodes, satisfying the following compatibility condition 
\[g\circ(h\cdot t)=(g_{1}\circ h)\cdot\lambda(g_{2})\cdot(g_{3}\circ t)\quad\forall g,h,t\in H\]
which generalizes \eqref{ccskewbraces}. Moreover, as was pointed in \cite[Corollary 2.4]{AGV}, cocommutative Hopf braces give rise to solutions of Yang-Baxter equation too.

On the other hand, without going into detail, it is not irrelevant to highlight the relationship between Hopf braces and invertible 1-cocycles, which are nothing more than coalgebra isomorphisms between two different Hopf algebras, $\pi\colon H\rightarrow B$, such that $B$ is a $H$-module algebra. In \cite[Theorem 1.12]{AGV} it is proved that the category of Hopf braces with $H_{1}$ fixed is equivalent to the category of invertible 1-cocycles $\pi\colon H_{1}\rightarrow B$. Moreover, this result remains valid in the case that $H_{1}$ is not fixed (see \cite{VRBAMod}).

Therefore, motivated by the fact that cocommutative Hopf braces induce solutions of Quantum Yang-Baxter equation, in this paper we study another objects that characterise the structure of Hopf braces in the cocommutative setting. So, given $\sf{C}$ a braided monoidal category, in Section \ref{sect2} we introduce the category of brace triples (see Definition \ref{BTdef}) and the category of \texttt{s}-Hopf braces (see Definition \ref{sHbr}). Note that \texttt{s}-Hopf braces generalize cocommutative Hopf braces because both categories are the same under cocommutativity assumption (see Remark \ref{sHB-cocHB}). After that, a functor from brace triples to \texttt{s}-Hopf braces is constructed explicitly (see Theorem \ref{Prop Brace TB}), and another from \texttt{s}-Hopf braces to brace triples (see Theorem \ref{sHB to BT}), ending the section with the main Theorem \ref{iso1} where we prove that the previous correspondence give rise to a categorical isomorphism. As a consequence (see Corollary \ref{Cor iso cocHB cocTB}), we obtain a categorical isomorphism between cocommutative Hopf braces and cocommutative brace triples, that is to say, a cocommutative Hopf brace is no more than a cocommutative Hopf algebra $H$ together with a pair of morphisms, $T_{H}\colon H\rightarrow H$ and $\gamma_{H}\colon H\otimes H\rightarrow H$, satisfying some compatibility conditions between them and the Hopf algebra structure over $H$. 

In Section \ref{sec3}, we introduce the notion of post-Hopf algebra in a braided monoidal category $\sf{C}$ (see Definition \ref{PHopf algebra}), which generalizes the one introduced by Y. Li, Y. Sheng and R. Tang in \cite{LST} for a category of vector spaces over a field $\mathbb{K}$ (see also \cite{BGST}). After proving some interesting properties of these objects that can be deducted from the definition, we obtain a functor from finite brace triples to post-Hopf algebras (see Theorem \ref{inversebetaBT}). At this point, hypothesis of cocommutativity acquires significant importance and it is essential, together with technical condition \eqref{property for betatilde2}, to prove the existence of a functor from the category of cocommutative post-Hopf algebras that satisfy \eqref{property for betatilde2} to the category of finite cocommutative Hopf braces (see Theorem \ref{Hhat brace}). Therefore, Theorem \ref{iso2} is the main result of this section, where we prove that this correspondence induces a categorical isomorphism between $\sf{cocPost}\textnormal{-}\sf{Hopf}^{\star}$, the category of cocommutative post-Hopf algebras satisfying \eqref{property for betatilde2}, and finite cocommutative brace triples. So, as a consequence, $\sf{cocPost}\textnormal{-}\sf{Hopf}^{\star}$, finite cocommutative brace triples and finite cocommutative Hopf braces are isomorphic, which suppose a generalization of \cite[Theorem 2.13]{LST} to the braided monoidal setting.

In the following diagram it is posible to consult a summary of the categorical relationships that can be seen along this paper. Detailed notation information will be introduced throughout the paper.
\vspace{-0.5cm}
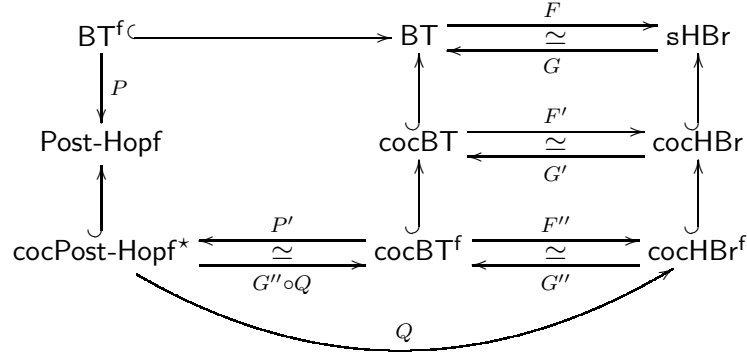
\begin{figure}[h]
\[\xymatrix{
 &\sf{BT^{f}}\ar[d]^-{P}\ar@{^(->}[rr] & &\sf{BT}\ar@<1 ex>[rr]^-{F} &\simeq &\texttt{s}\sf{HBr}\ar@<1 ex>[ll]^-{G} \\
 &\sf{Post}\textnormal{-}\sf{Hopf} & &\sf{cocBT}\ar@{^(->}[u]\ar@<1ex>[rr]^-{F'} &\simeq &\sf{cocHBr}\ar@{^(->}[u]\ar@<1ex>[ll]^-{G'}\\
 &\sf{cocPost}\textnormal{-}\sf{Hopf}^{\star}\ar@/_{13mm}/[rrrr]^-{Q}\ar@<-1ex>[rr]_-{G''\circ Q}\ar@{^(->}[u] &\simeq &\sf{cocBT^{f}}\ar@<-1ex>[ll]_-{P'}\ar@{^(->}[u]\ar@<1ex>[rr]^-{F''} &\simeq &\sf{cocHBr^{f}}\ar@{^(->}[u]\ar@<1ex>[ll]^-{G''}
}
\]
\label{Figure 1}
\caption{Categorical relationships between $\sf{HBr}$, $\sf{BT}$ and $\sf{Post}$-${\sf Hopf}$.}
\end{figure}

\section{Preliminaries}

Throughout this paper we are going to denote by ${\sf  C}$ a strict braided monoidal category with tensor product $\ot$, unit object $K$ and braiding $c$. 

As can be found in \cite{Mac}, a monoidal category is a category ${\sf  C}$ together with a functor $\ot :{\sf  C}\times {\sf  C}\rightarrow {\sf  C}$, called tensor product, an object $K$ of ${\sf C}$, called the unit object, and  families of natural isomorphisms 
$$a_{M,N,P}:(M\ot N)\ot P\rightarrow M\ot (N\ot P),\;\;\;r_{M}:M\ot K\rightarrow M, \;\;\; l_{M}:K\ot M\rightarrow M,$$
in ${\sf  C}$, called  associativity, right unit and left unit constraints, respectively, which satisfy the Pentagon Axiom and the Triangle Axiom, i.e.,
$$a_{M,N, P\ot Q}\co a_{M\ot N,P,Q}= (id_{M}\ot a_{N,P,Q})\co a_{M,N\ot P,Q}\co (a_{M,N,P}\ot id_{Q}),$$
$$(id_{M}\ot l_{N})\co a_{M,K,N}=r_{M}\ot id_{N},$$
where for each object $X$ in ${\sf  C}$, $id_{X}$ denotes the identity morphism of $X$. A monoidal category is called strict if the previous constraints are identities. It is an important result (see for example \cite{K}) that every non-strict monoidal category is monoidal equivalent to a strict one, so the strict character can be assumed without loss of generality. Then, results proved in a strict setting hold for every non-strict monoidal category that include, between others, the category ${\mathbb K}$-{\sf Vect} of vector spaces over a field ${\mathbb K}$, the category $R$-{\sf Mod} of left modules over a commutative ring $R$ or the category of sets, $\sf{Set}$.
For
simplicity of notation, given objects $M$, $N$, $P$ in ${\sf  C}$ and a morphism $f:M\rightarrow N$, in most cases we will write $P\ot f$ for $id_{P}\ot f$ and $f \ot P$ for $f\ot id_{P}$.

A braiding for a strict monoidal category ${\sf  C}$ is a natural family of isomorphisms 
$$c_{M,N}:M\ot N\rightarrow N\ot M$$ subject to the conditions 
$$
c_{M,N\ot P}= (N\ot c_{M,P})\co (c_{M,N}\ot P),\;\;
c_{M\ot N, P}= (c_{M,P}\ot N)\co (M\ot c_{N,P}).
$$

A strict braided monoidal category ${\sf  C}$ is a strict monoidal category with a braiding. These categories were introduced by Joyal and Street in \cite{JS1} (see also  \cite{JS2}) motivated by the theory of braids and links in topology. Note that, as a consequence of the definition, the equalities $c_{M,K}=c_{K,M}=id_{M}$ hold, for all object  $M$ of ${\sf  C}$. Moreover, if $\sf{C}$ is braided with braiding $c$, then $\sf{C}$ is also braided with braiding $c^{-1}$. We will denote by $\overline{\sf{C}}$ the category $\sf{C}$ with braiding $c^{-1}$.

If the braiding satisfies that  $c_{N,M}\co c_{M,N}=id_{M\ot N},$ for all $M$, $N$ in ${\sf  C}$, we will say that ${\sf C}$  is symmetric. In this case, we call the braiding $c$ a symmetry for the category ${\sf  C}$.

In the following definitions we sum up some basic notions in the braided monoidal setting.

\begin{definition}
{\rm 
An algebra in ${\sf  C}$ is a triple $A=(A, \eta_{A},
\mu_{A})$ where $A$ is an object in ${\sf  C}$ and
 \mbox{$\eta_{A}:K\rightarrow A$} (unit), $\mu_{A}:A\otimes A
\rightarrow A$ (product) are morphisms in ${\sf  C}$ such that
$\mu_{A}\circ (A\otimes \eta_{A})=id_{A}=\mu_{A}\circ
(\eta_{A}\otimes A)$, $\mu_{A}\circ (A\otimes \mu_{A})=\mu_{A}\circ
(\mu_{A}\otimes A)$. Given two algebras $A= (A, \eta_{A}, \mu_{A})$
and $B=(B, \eta_{B}, \mu_{B})$, a morphism  $f:A\rightarrow B$ in {\sf  C} is an algebra morphism if $\mu_{B}\circ (f\otimes f)=f\circ \mu_{A}$, $ f\circ
\eta_{A}= \eta_{B}$. 

If  $A$, $B$ are algebras in ${\sf  C}$, the tensor product
$A\otimes B$ is also an algebra in
${\sf  C}$ where
$\eta_{A\otimes B}=\eta_{A}\otimes \eta_{B}$ and $\mu_{A\otimes
	B}=(\mu_{A}\otimes \mu_{B})\circ (A\otimes c_{B,A}\otimes B).$
}
\end{definition}

\begin{definition}
{\rm 
A coalgebra  in ${\sf  C}$ is a triple ${D} = (D,
\varepsilon_{D}, \delta_{D})$ where $D$ is an object in ${\sf
C}$ and $\varepsilon_{D}: D\rightarrow K$ (counit),
$\delta_{D}:D\rightarrow D\otimes D$ (coproduct) are morphisms in
${\sf  C}$ such that $(\varepsilon_{D}\otimes D)\circ
\delta_{D}= id_{D}=(D\otimes \varepsilon_{D})\circ \delta_{D}$,
$(\delta_{D}\otimes D)\circ \delta_{D}=
 (D\otimes \delta_{D})\circ \delta_{D}.$ If ${D} = (D, \varepsilon_{D},
 \delta_{D})$ and
${ E} = (E, \varepsilon_{E}, \delta_{E})$ are coalgebras, a morphism 
$f:D\rightarrow E$ in  {\sf  C} is a coalgebra morphism if $(f\otimes f)\circ
\delta_{D} =\delta_{E}\circ f$, $\varepsilon_{E}\circ f
=\varepsilon_{D}.$ 

Given  $D$, $E$ coalgebras in ${\sf  C}$, the tensor product $D\otimes E$ is a
coalgebra in ${\sf  C}$ where $\varepsilon_{D\otimes
E}=\varepsilon_{D}\otimes \varepsilon_{E}$ and $\delta_{D\otimes
E}=(D\otimes c_{D,E}\otimes E)\circ( \delta_{D}\otimes \delta_{E}).$
}
\end{definition}

\begin{definition}
	{\rm 
 Let ${D} = (D, \varepsilon_{D},
\delta_{D})$ be a coalgebra and $A=(A, \eta_{A}, \mu_{A})$ an
algebra in $\sf{C}$. By ${\mathcal  H}(D,A)$ we denote the set of morphisms
$f:D\rightarrow A$ in ${\sf  C}$. With the convolution operation
$f\ast g= \mu_{A}\circ (f\otimes g)\circ \delta_{D}$, ${\mathcal  H}(D,A)$ is an algebra where the unit element is $\eta_{A}\circ \varepsilon_{D}=\varepsilon_{D}\otimes \eta_{A}$.
}
\end{definition}

\begin{definition}
{\rm 
 Let  $A$ be an algebra. The pair
$(M,\varphi_{M})$ is a left $A$-module if $M$ is an object in
${\sf  C}$ and $\varphi_{M}:A\otimes M\rightarrow M$ is a morphism
in ${\sf  C}$ satisfying $\varphi_{M}\circ(
\eta_{A}\ot M)=id_{M}$, $\varphi_{M}\circ (A\ot \varphi_{M})=\varphi_{M}\circ
(\mu_{A}\ot M)$. Given two left ${A}$-modules $(M,\varphi_{M})$
and $(N,\varphi_{N})$, $f:M\rightarrow N$ is a morphism of left
${A}$-modules if $\varphi_{N}\circ (A\ot f)=f\circ \varphi_{M}$.  

The  composition of morphisms of left $A$-modules is a morphism of left $A$-modules. Then left $A$-modules form a category that we will denote by $\;_{\sf A}${\sf Mod}.

}

\end{definition}

\begin{definition}
{\rm 
We say that $X$ is a
bialgebra  in ${\sf  C}$ if $(X, \eta_{X}, \mu_{X})$ is an
algebra, $(X, \varepsilon_{X}, \delta_{X})$ is a coalgebra, and
$\varepsilon_{X}$ and $\delta_{X}$ are algebra morphisms
(equivalently, $\eta_{X}$ and $\mu_{X}$ are coalgebra morphisms). Moreover, if there exists a morphism $\lambda_{X}:X\rightarrow X$ in ${\sf  C}$,
called the antipode of $X$, satisfying that $\lambda_{X}$ is the inverse of $id_{X}$ in ${\mathcal  H}(X,X)$, i.e., 
\begin{equation}
\label{antipode}
id_{X}\ast \lambda_{X}= \eta_{X}\circ \varepsilon_{X}= \lambda_{X}\ast id_{X},
\end{equation}
we say that $X$ is a Hopf algebra. A morphism of Hopf algebras is an algebra-coalgebra morphism. Note that, if $f:X\rightarrow Y$ is a Hopf algebra morphism the following equality holds:
\begin{equation}
\label{morant}
\lambda_{Y}\co f=f\co \lambda_{X}.
\end{equation} 

With the composition of morphisms in {\sf C} we can define a category whose objects are Hopf algebras  and whose morphisms are morphisms of Hopf algebras. We denote this category by ${\sf  Hopf}$.

Note that if $X=(X,\eta_{X},\mu_{X},\varepsilon_{X},\delta_{X},\lambda_{X})$ is a Hopf algebra in $\sf{C}$ such that its antipode, $\lambda_{X}$, is an isomorphism, then $X^{cop}=(X,\eta_{X},\mu_{X},\varepsilon_{X},c_{X,X}^{-1}\circ\delta_{X},\lambda_{X}^{-1})$ is a Hopf algebra in $\overline{\sf{C}}$ (see \cite{MAJ}).

A Hopf algebra is commutative if $\mu_{X}\co c_{X,X}=\mu_{X}$ and cocommutative if $c_{X,X}\co \delta_{X}=\delta_{X}.$ It is easy to see that in both cases $\lambda_{X}\circ \lambda_{X} =id_{X}$.
}
\end{definition}

If $X$ is a Hopf algebra, relevant properties of its antipode, $\lambda_{X}$, are the following:  It is antimultiplicative and anticomultiplicative 
\begin{equation}
\label{a-antip}
\lambda_{X}\co \mu_{X}=  \mu_{X}\co (\lambda_{X}\ot \lambda_{X})\co c_{X,X},\;\;\;\; \delta_{X}\co \lambda_{X}=c_{X,X}\co (\lambda_{X}\ot \lambda_{X})\co \delta_{X}, 
\end{equation}
and leaves the unit and counit invariant, i.e., 
\begin{equation}
\label{u-antip}
\lambda_{X}\co \eta_{X}=  \eta_{X},\;\; \varepsilon_{X}\co \lambda_{X}=\varepsilon_{X}.
\end{equation}
So, it is a direct consequence of these identities that, if $X$ is commutative, then $\lambda_{X}$ is an algebra morphism and, if $X$ is cocommutative, then $\lambda_{X}$ is a coalgebra morphism.

In the following definition we recall the notion of left module (co)algebra.

\begin{definition}
{\rm 
Let $X$ be a Hopf algebra. An algebra $A$  is said to be a left $X$-module algebra if $(A, \varphi_{A})$ is a left $X$-module and $\eta_{A}$, $\mu_{A}$ are morphisms of left $X$-modules, i.e.,
\begin{equation}
\label{mod-alg}
\varphi_{A}\circ (X\otimes \eta_{A})=\varepsilon_{X}\otimes \eta_{A},\;\;\varphi_{A}\circ (X\otimes \mu_{A})=\mu_{A}\circ \varphi_{A\otimes A},
\end{equation}
where  $\varphi_{A\otimes A}=(\varphi_{A}\otimes \varphi_{A})\circ (X\otimes c_{X,A}\otimes A)\circ (\delta_{X}\otimes A\otimes A)$ is the left action on $A\otimes A$. 
}
\end{definition}
\begin{definition}
Let $X$ be a Hopf algebra. A coalgebra $D$ is said to be a left $X$-module coalgebra if $(D,\varphi_{D})$ is a left $X$-module and $\varepsilon_{D}$, $\delta_{D}$ are morphisms of left $X$-modules, in other words, the following equalities hold:
\begin{equation}\label{mod-coalg}
\varepsilon_{D}\circ\varphi_{D}=\varepsilon_{H}\otimes\varepsilon_{D},\;\;\delta_{D}\circ\varphi_{D}=\varphi_{D\otimes D}\circ(H\otimes\delta_{D}).
\end{equation}
Equivalently, $(D,\varphi_{D})$ is a left $X$-module coalgebra if and only if $\varphi_{D}$ is a coalgebra morphism.
\end{definition}

The following result will be interesting along this paper.
\begin{theorem}\label{th.interest}
Let $X=(X,\eta_{X},\mu_{X},\varepsilon_{X},\delta_{X},\lambda_{X})$ and $H=(H,\eta_{H},\mu_{H},\varepsilon_{H},\delta_{H},\lambda_{H})$ be Hopf algebras in $\sf{C}$ such that there exists a morphism $\varphi_{H}\colon X\otimes H\rightarrow H$ satisfying the following conditions:
\begin{itemize}
\item[(i)] $\varphi_{H}\circ(X\otimes\mu_{H})=\mu_{H}\circ(\varphi_{H}\otimes\varphi_{H})\circ(X\otimes c_{X,H}\otimes H)\circ(\delta_{X}\otimes H\otimes H),$
\item[(ii)] $\varphi_{H}$ is a coalgebra morphism.
\end{itemize}
Then, $\varphi_{H}\circ(X\otimes\eta_{H})=\varepsilon_{X}\otimes\eta_{H}$ holds.
\end{theorem}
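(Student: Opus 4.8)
The plan is to identify the morphism $f:=\varphi_{H}\circ(X\otimes\eta_{H})\colon X\rightarrow H$ with the unit $\eta_{H}\circ\varepsilon_{X}$ of the convolution algebra $\mathcal{H}(X,H)$. The strategy is to extract one structural property of $f$ from each hypothesis and then conclude by a purely monoid-theoretic observation: an element of a monoid which is simultaneously idempotent and invertible must be the neutral element.

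First, I would prove that $f$ is idempotent for the convolution product, i.e. $f\ast f=f$. This is obtained by precomposing the equality in $(i)$ with $X\otimes\eta_{H}\otimes\eta_{H}$. On the left-hand side one gets $\varphi_{H}\circ\big(X\otimes(\mu_{H}\circ(\eta_{H}\otimes\eta_{H}))\big)=\varphi_{H}\circ(X\otimes\eta_{H})=f$, using $\mu_{H}\circ(\eta_{H}\otimes\eta_{H})=\eta_{H}$. On the right-hand side the braiding is trivialised, since by naturality of $c$ and $c_{X,K}=id_{X}$ one has $c_{X,H}\circ(X\otimes\eta_{H})=\eta_{H}\otimes X$; what remains is $\mu_{H}\circ(\varphi_{H}\otimes\varphi_{H})\circ(X\otimes\eta_{H}\otimes X\otimes\eta_{H})\circ(\delta_{X}\otimes K\otimes K)=\mu_{H}\circ(f\otimes f)\circ\delta_{X}=f\ast f$.

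Second, I would observe that $f$ is a coalgebra morphism, being the composite of coalgebra morphisms $X\cong X\otimes K\xrightarrow{\,X\otimes\eta_{H}\,}X\otimes H\xrightarrow{\,\varphi_{H}\,}H$, where $\eta_{H}$ is a coalgebra morphism because $H$ is a bialgebra and $\varphi_{H}$ is one by hypothesis $(ii)$. Any coalgebra morphism $g$ into a Hopf algebra is convolution-invertible with inverse $\lambda_{H}\circ g$: indeed $g\ast(\lambda_{H}\circ g)=\mu_{H}\circ(H\otimes\lambda_{H})\circ(g\otimes g)\circ\delta_{X}=\mu_{H}\circ(H\otimes\lambda_{H})\circ\delta_{H}\circ g=\eta_{H}\circ\varepsilon_{H}\circ g=\eta_{H}\circ\varepsilon_{X}$ by \eqref{antipode} together with $\varepsilon_{H}\circ g=\varepsilon_{X}$, and symmetrically for the opposite composite. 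Hence $f$ is invertible in $\mathcal{H}(X,H)$.

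Finally, combining the two properties: in the monoid $(\mathcal{H}(X,H),\ast)$ the element $f$ satisfies $f\ast f=f$ and is invertible, so $f=f^{-1}\ast f\ast f=f^{-1}\ast f=\eta_{H}\circ\varepsilon_{X}$, which is exactly the claimed identity $\varphi_{H}\circ(X\otimes\eta_{H})=\varepsilon_{X}\otimes\eta_{H}$. I do not expect a genuine obstacle in this argument; the only point requiring some care is the bookkeeping of the braiding $c_{X,H}$ when restricting $(i)$ to the units, which is controlled by naturality of $c$ and by $c_{X,K}=id_{X}$, and the fact that $\eta_{H}\circ\varepsilon_{X}$ is indeed the unit of $\mathcal{H}(X,H)$, already recorded in the preliminaries.
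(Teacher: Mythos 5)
Your proof is correct and is, at bottom, the same argument as the paper's: the paper's single long computation is precisely the identity $f=(f\ast f)\ast(\lambda_{H}\circ f)=f\ast(\lambda_{H}\circ f)=\eta_{H}\circ\varepsilon_{X}$ unwound into explicit composites, with hypothesis (i) supplying the idempotency $f\ast f=f$ and hypothesis (ii) supplying the convolution inverse $\lambda_{H}\circ f$. Your repackaging into the monoid-theoretic statement ``idempotent and invertible implies neutral'' is a cleaner presentation of the identical mechanism, and all the individual steps (the trivialisation of $c_{X,H}$ against $\eta_{H}$ via naturality and $c_{X,K}=id_{X}$, and the fact that a coalgebra morphism into a Hopf algebra is convolution-invertible) check out.
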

\begin{proof} The equality follows by:
\begin{align*}
&\varphi_{H}\circ(X\otimes\eta_{H})\\=&(\varphi_{H}\otimes(\varepsilon_{H}\circ\varphi_{H}))\circ(X\otimes c_{X,H}\otimes H)\circ(\delta_{X}\otimes \eta_{H}\otimes\eta_{H})\;\footnotesize\textnormal{(by (ii), naturality of $c$ and (co)unit properties)}\\=&\mu_{H}\circ(\varphi_{H}\otimes(\eta_{H}\circ\varepsilon_{H}\circ\varphi_{H}))\circ(X\otimes c_{X,H}\otimes H)\circ(\delta_{X}\otimes\eta_{H}\otimes\eta_{H})\;\footnotesize\textnormal{(by unit property)}\\=&\mu_{H}\circ(\varphi_{H}\otimes((id_{H}\ast \lambda_{H})\circ\varphi_{H}))\circ(X\otimes c_{X,H}\otimes H)\circ(\delta_{X}\otimes\eta_{H}\otimes\eta_{H})\;\footnotesize\textnormal{(by \eqref{antipode})}\\=&\mu_{H}\circ(\varphi_{H}\otimes(\mu_{H}\circ(H\otimes\lambda_{H})\circ(\varphi_{H}\otimes\varphi_{H})\circ(X\otimes c_{X,H}\otimes H)\circ(\delta_{X}\otimes\delta_{H})))\circ(X\otimes c_{X,H}\otimes H)\\&\circ(\delta_{X}\otimes\eta_{H}\otimes\eta_{H})\;\footnotesize\textnormal{(by (ii))}\\=&\mu_{H}\circ((\mu_{H}\circ(\varphi_{H}\otimes\varphi_{H})\circ(X\otimes c_{X,H}\otimes H)\circ(\delta_{X}\otimes H\otimes H))\otimes(\lambda_{H}\circ\varphi_{H}))\\&\circ(X\otimes((H\otimes c_{X,H}\otimes H)\circ(c_{X,H}\otimes\delta_{H})))\circ(\delta_{X}\otimes\eta_{H}\otimes\eta_{H})\;\footnotesize\textnormal{(by naturality of $c$, coassociativity of $\delta_{X}$}\\&\footnotesize\textnormal{and associativity of $\mu_{H}$)}\\=&\mu_{H}\circ((\varphi_{H}\circ(X\otimes\mu_{H}))\otimes(\lambda_{H}\circ\varphi_{H}))\circ(X\otimes((H\otimes c_{X,H}\otimes H)\circ(c_{X,H}\otimes\delta_{H})))\circ(\delta_{X}\otimes\eta_{H}\otimes\eta_{H})\\&\footnotesize\textnormal{(by (i))}\\=&\mu_{H}\circ(H\otimes\lambda_{H})\circ(\varphi_{H}\otimes\varphi_{H})\circ(X\otimes c_{X,H}\otimes H)\circ(\delta_{X}\otimes(\delta_{H}\circ\eta_{H}))\;\footnotesize\textnormal{(by naturality of $c$ and unit property)}\\=&(id_{H}\ast\lambda_{H})\circ\varphi_{H}\circ(X\otimes\eta_{H})\;\footnotesize\textnormal{(by (ii))}\\=&\eta_{H}\circ\varepsilon_{H}\circ\varphi_{H}\circ(X\otimes\eta_{H})\;\footnotesize\textnormal{(by \eqref{antipode})}\\=&\varepsilon_{X}\otimes\eta_{H}\;\footnotesize\textnormal{(by (ii) and (co)unit properties)}.\qedhere
\end{align*}
\end{proof}
\begin{corollary}
Let $X=(X,\eta_{X},\mu_{X},\varepsilon_{X},\delta_{X},\lambda_{X})$ and $H=(H,\eta_{H},\mu_{H},\varepsilon_{H},\delta_{H},\lambda_{H})$ be Hopf algebras in $\sf{C}$. If $(H,\varphi_{H})$ is a left $X$-module coalgebra and $\mu_{H}$ is a morphism of left $X$-modules, then $(H,\varphi_{H})$ is a left $X$-module algebra.
\end{corollary}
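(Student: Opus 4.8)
The plan is to observe that this corollary is an immediate consequence of Theorem \ref{th.interest} once the hypotheses are unwound. To prove that $(H,\varphi_{H})$ is a left $X$-module algebra one must verify three things: that $(H,\varphi_{H})$ is a left $X$-module, that $\eta_{H}$ is a morphism of left $X$-modules, that is, $\varphi_{H}\circ(X\otimes\eta_{H})=\varepsilon_{X}\otimes\eta_{H}$, and that $\mu_{H}$ is a morphism of left $X$-modules, which is precisely the second equality in \eqref{mod-alg}, namely $\varphi_{H}\circ(X\otimes\mu_{H})=\mu_{H}\circ\varphi_{H\otimes H}$ with $\varphi_{H\otimes H}=(\varphi_{H}\otimes\varphi_{H})\circ(X\otimes c_{X,H}\otimes H)\circ(\delta_{X}\otimes H\otimes H)$.

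First I would note that the first and third of these conditions are literally among the hypotheses of the statement: $(H,\varphi_{H})$ is assumed to be a left $X$-module coalgebra, hence in particular a left $X$-module, and $\mu_{H}$ is assumed to be a morphism of left $X$-modules. Thus only the middle condition, the compatibility of the action with the unit $\eta_{H}$, requires an argument.

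Next I would recognise that the two assumptions of the statement are exactly the two hypotheses of Theorem \ref{th.interest}. Indeed, unravelling the definition of $\varphi_{H\otimes H}$ shows that ``$\mu_{H}$ is a morphism of left $X$-modules'' is precisely condition (i) of that theorem; and ``$(H,\varphi_{H})$ is a left $X$-module coalgebra'' is, as remarked just after the definition of left $X$-module coalgebra, equivalent to ``$\varphi_{H}$ is a coalgebra morphism'', which is condition (ii). Applying Theorem \ref{th.interest} then gives $\varphi_{H}\circ(X\otimes\eta_{H})=\varepsilon_{X}\otimes\eta_{H}$, which completes the verification of \eqref{mod-alg} and hence shows that $(H,\varphi_{H})$ is a left $X$-module algebra.

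I do not expect any real obstacle: the proof is pure bookkeeping, the only point to check carefully being that the left action on $H\otimes H$ used in the definition of module algebra coincides with the expression appearing in Theorem \ref{th.interest}, which is immediate from the definition of the tensor product module structure.
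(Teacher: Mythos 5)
Your proposal is correct and matches the paper's intent exactly: the corollary is stated immediately after Theorem \ref{th.interest} precisely so that the unit compatibility $\varphi_{H}\circ(X\otimes\eta_{H})=\varepsilon_{X}\otimes\eta_{H}$ follows from that theorem, while the module and product conditions are hypotheses. Your bookkeeping of which conditions are assumed and which one needs the theorem is exactly the argument the paper leaves implicit.
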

In the braided setting the definition of Hopf brace is the following:

\begin{definition}
\label{H-brace}
{\rm Let $H=(H, \varepsilon_{H}, \delta_{H})$ be a coalgebra in {\sf C}. Let's assume that there are two algebra structures $(H, \eta_{H}^1, \mu_{H}^1)$, $(H, \eta_{H}^2, \mu_{H}^2)$ defined on $H$ and suppose that there exist two endomorphism of $H$ denoted by $\lambda_{H}^{1}$ and $\lambda_{H}^{2}$. We will say that 
$$(H, \eta_{H}^{1}, \mu_{H}^{1}, \eta_{H}^{2}, \mu_{H}^{2}, \varepsilon_{H}, \delta_{H}, \lambda_{H}^{1}, \lambda_{H}^{2})$$
is a Hopf brace in {\sf C} if:
\begin{itemize}
\item[(i)] $H_{1}=(H, \eta_{H}^{1}, \mu_{H}^{1},  \varepsilon_{H}, \delta_{H}, \lambda_{H}^{1})$ is a Hopf algebra in {\sf C}.
\item[(ii)] $H_{2}=(H, \eta_{H}^{2}, \mu_{H}^{2},  \varepsilon_{H}, \delta_{H}, \lambda_{H}^{2})$ is a Hopf algebra in {\sf C}.
\item[(iii)] The  following equality holds:
$$\mu_{H}^{2}\co (H\ot \mu_{H}^{1})=\mu_{H}^{1}\co (\mu_{H}^{2}\ot \Gamma_{H_{1}} )\co (H\ot c_{H,H}\ot H)\co (\delta_{H}\ot H\ot H),$$
\end{itemize}
where  $$\Gamma_{H_{1}}\coloneqq\mu_{H}^{1}\co (\lambda_{H}^{1}\ot \mu_{H}^{2})\co (\delta_{H}\ot H).$$

Following \cite{RGON}, a Hopf brace will be denoted by ${\mathbb H}=(H_{1}, H_{2})$ or in a simpler way by $\mathbb{H}$.

}
\end{definition}

\begin{definition}
 If  ${\mathbb H}$ is a Hopf brace in {\sf C}, we will say that ${\mathbb H}$ is cocommutative if $\delta_{H}=c_{H,H}\circ \delta_{H}$, i.e., if $H_{1}$ and $H_{2}$ are cocommutative Hopf algebras in {\sf C}.
 
Note that by \cite[Corollary 5]{Sch}, if $H$ is a  cocommutative Hopf algebra  in the  braided monoidal category {\sf C}, the identity 
\begin{equation}
\label{ccb}
c_{H,H}\circ c_{H,H}=id_{H\otimes H} 
\end{equation}
holds.
\end{definition}

\begin{definition}
{\rm  Given two Hopf braces ${\mathbb H}$  and  ${\mathbb B}$ in {\sf C}, a morphism $f$ in {\sf C} between the two underlying objects is called a morphism of Hopf braces if both $f:H_{1}\rightarrow B_{1}$ and $f:H_{2}\rightarrow B_{2}$ are Hopf algebra morphisms.
		
Hopf braces together with morphisms of Hopf braces form a category which we denote by {\sf HBr}. Moreover, cocommutative Hopf braces constitute a full subcategory of $\sf{HBr}$ which we will denote by $\sf{cocHBr}$.
		
}
\end{definition}
Let ${\mathbb H}$  be a Hopf brace in {\sf C}. Then 
\begin{equation}
\eta_{H}^{1}=\eta_{H}^2, 
 \end{equation}
holds and, by \cite[Lemma 1.7]{AGV},  in this braided setting  the equality
\begin{equation}
\label{agv1}
\Gamma_{H_{1}}\circ (H\otimes \lambda_{H}^1)=\mu_{H}^{1}\circ ((\lambda_{H}^1\circ \mu_{H}^{2})\otimes H)\circ (H\otimes c_{H,H}) \circ (\delta_{H}\otimes H)
\end{equation}
also holds. Moreover, in our braided context \cite[Lemma 1.8]{AGV} and \cite[Remark 1.9]{AGV} hold and then we have that the algebra $(H,\eta_{H}^{1}, \mu_{H}^{1})$ is a left $H_{2}$-module algebra with action $\Gamma_{H_{1}}$ and $\mu_{H}^2$ admits the following expression:
\begin{equation}
\label{eb2}
\mu_{H}^2=\mu_{H}^{1}\circ (H\otimes \Gamma_{H_{1}})\circ (\delta_{H}\otimes H). 
\end{equation}
In addition, by \cite[Lemma 2.2]{AGV}, $\Gamma_{H_{1}}$ is a coalgebra morphism when $\mathbb{H}$ is cocommutative.

\begin{lemma}
Let $\mathbb{H}$ be a Hopf brace in $\sf{C}$. The equality
\begin{equation}\label{GammaL2}
\Gamma_{H_{1}}\circ(H\otimes\lambda_{H}^{2})\circ\delta_{H}=\lambda_{H}^{1}
\end{equation}
holds.
\end{lemma}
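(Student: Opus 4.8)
\emph{Proof proposal.} The plan is to unfold the definition of $\Gamma_{H_{1}}$ and reduce the left-hand side to the antipode axiom for the Hopf algebra $H_{2}$. Recall that $\Gamma_{H_{1}}=\mu_{H}^{1}\co (\lambda_{H}^{1}\ot \mu_{H}^{2})\co (\delta_{H}\ot H)$. Precomposing with $(H\ot\lambda_{H}^{2})\co\delta_{H}$ and moving the factor $\lambda_{H}^{2}$ inside, the expression $\Gamma_{H_{1}}\co(H\ot\lambda_{H}^{2})\co\delta_{H}$ equals
$$\mu_{H}^{1}\co (\lambda_{H}^{1}\ot \mu_{H}^{2})\co (\delta_{H}\ot \lambda_{H}^{2})\co\delta_{H}.$$
A single application of the coassociativity of $\delta_{H}$ rewrites $(\delta_{H}\ot \lambda_{H}^{2})\co\delta_{H}$ as $(H\ot((H\ot\lambda_{H}^{2})\co\delta_{H}))\co\delta_{H}$, so that the left-hand side becomes
$$\mu_{H}^{1}\co\big(\lambda_{H}^{1}\ot(\mu_{H}^{2}\co(H\ot\lambda_{H}^{2})\co\delta_{H})\big)\co\delta_{H}.$$

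Next I would recognise the inner composite: $\mu_{H}^{2}\co(H\ot\lambda_{H}^{2})\co\delta_{H}$ is exactly $id_{H}\ast\lambda_{H}^{2}$ in the convolution algebra $\mathcal H(H,H)$ associated with the coalgebra $(H,\varepsilon_{H},\delta_{H})$ and the algebra $(H,\eta_{H}^{2},\mu_{H}^{2})$, hence by the antipode identity \eqref{antipode} applied to $H_{2}$ it equals $\eta_{H}^{2}\co\varepsilon_{H}$. Substituting this, the left-hand side reduces to $\mu_{H}^{1}\co(\lambda_{H}^{1}\ot(\eta_{H}^{2}\co\varepsilon_{H}))\co\delta_{H}$, and the counit property of $\delta_{H}$ on the second tensor factor collapses the coproduct, leaving $\mu_{H}^{1}\co(\lambda_{H}^{1}\ot\eta_{H}^{2})$. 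Finally, using the equality $\eta_{H}^{1}=\eta_{H}^{2}$ recorded just before the statement together with the right unit axiom $\mu_{H}^{1}\co(H\ot\eta_{H}^{1})=id_{H}$, this becomes $\lambda_{H}^{1}$, which is the claimed identity.

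There is no serious obstacle here: the argument is a short diagrammatic computation. The only point needing a little care is the bookkeeping in the first step, namely invoking coassociativity of $\delta_{H}$ so that the segment $\mu_{H}^{2}\co(H\ot\lambda_{H}^{2})\co\delta_{H}$ is isolated and can be identified with the defining expression of the antipode of $H_{2}$; once that is done, everything else follows from the (co)unit constraints. The computation can equivalently be presented as a brief string-diagram manipulation.
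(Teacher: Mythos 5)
Your proposal is correct and follows essentially the same route as the paper: unfold $\Gamma_{H_{1}}$, use coassociativity of $\delta_{H}$ to isolate $\mu_{H}^{2}\circ(H\otimes\lambda_{H}^{2})\circ\delta_{H}=id_{H}\ast\lambda_{H}^{2}$, and conclude by the antipode axiom \eqref{antipode} for $H_{2}$ together with the (co)unit properties (and $\eta_{H}^{1}=\eta_{H}^{2}$). No gaps.
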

\begin{proof}The equality follows by:
\begin{align*} 
&\Gamma_{H_{1}}\circ(H\otimes\lambda_{H}^{2})\circ\delta_{H}\\=&\mu_{H}^{1}\circ(\lambda_{H}^{1}\otimes\mu_{H}^{2})\circ(\delta_{H}\otimes\lambda_{H}^{2})\circ\delta_{H}\;\footnotesize\textnormal{(by definition of $\Gamma_{H_{1}}$)}\\=&\mu_{H}^{1}\circ(\lambda_{H}^{1}\otimes(id_{H}\ast\lambda_{H}^{2}))\circ\delta_{H}\;\footnotesize\textnormal{(by coassociativity of $\delta_{H}$)}\\=&\lambda_{H}^{1}\;\footnotesize\textnormal{(by \eqref{antipode} and (co)unit properties)}.\qedhere
\end{align*}
\end{proof}

To conclude this introductory section we will remember the notion of finite object in $\sf{C}$, since they are going to be of special interest throughout Section \ref{sec3}.
\begin{definition}\label{finite}
An object $P$ in $\sf{C}$ is finite if there exists an object $P^{\ast}$, called the dual of $P$, and a $\sf{C}$-adjunction $P\otimes -\dashv P^{\ast}\otimes -$ between the tensor functors.
\end{definition}
We will denote by $a_{P}$ and $b_{P}$ the unit and the counit of the previous $\sf{C}$-adjunction, respectively. Finite objects in $\sf{C}$ constitute a full subcategory of $\sf{C}$ that we will denote by $\sf{C^{f}}$. Note that, for every finite object $P$ in $\sf{C}$, we have a natural algebra structure in $\sf{C}$ over the tensor object $P^{\ast}\otimes P$ as we can see in the following lemma, whose proof is straightforward.
\begin{lemma}
Let $P$ be a finite object in $\sf{C}$, then $P^{\ast}\otimes P$ is an algebra in $\sf{C}$ with product and unit given by
\[\mu_{P^{\ast}\otimes P}\coloneqq P^{\ast}\otimes b_{P}(K)\otimes P\]and\[\eta_{P^{\ast}\otimes P}\coloneqq a_{P}(K),\] respectively.
\end{lemma}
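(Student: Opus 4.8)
The plan is to verify the two algebra axioms---associativity of $\mu_{P^{\ast}\otimes P}$ and the left/right unit laws with respect to $\eta_{P^{\ast}\otimes P}$---by translating the defining identities of the $\sf{C}$-adjunction $P\otimes-\dashv P^{\ast}\otimes-$ into equalities between morphisms in $\sf{C}$. Recall that an adjunction $P\otimes-\dashv P^{\ast}\otimes-$ is encoded by its unit $a_P\colon \mathrm{Id}\Rightarrow (P^{\ast}\otimes-)\circ(P\otimes-)$ and counit $b_P\colon (P\otimes-)\circ(P^{\ast}\otimes-)\Rightarrow\mathrm{Id}$, so component-wise $a_P(X)\colon X\to P^{\ast}\otimes P\otimes X$ and $b_P(X)\colon P\otimes P^{\ast}\otimes X\to X$, and these are natural in $X$ and subject to the two triangle identities
\[
(b_P(P\otimes X))\circ(P\otimes a_P(X))=\mathrm{id}_{P\otimes X},\qquad
(P^{\ast}\otimes b_P(X))\circ(a_P(P^{\ast}\otimes X))=\mathrm{id}_{P^{\ast}\otimes X}.
\]
First I would fix notation by writing $a\coloneqq a_P(K)\colon K\to P^{\ast}\otimes P$ and $b\coloneqq b_P(K)\colon P\otimes P^{\ast}\to K$ (using $r,l$ identities since $\sf C$ is strict), and then express all the higher components $a_P(X)$, $b_P(X)$ in terms of $a$ and $b$ via naturality: for any object $X$ and any morphism $f\colon K\to X$ one has $a_P(X)\circ f=(P^{\ast}\otimes P\otimes f)\circ a$, and dually for $b$. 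This reduces the triangle identities to the ``zig-zag'' equations $(b\otimes P)\circ(P\otimes a)=\mathrm{id}_P$ and $(P^{\ast}\otimes b)\circ(a\otimes P^{\ast})=\mathrm{id}_{P^{\ast}}$.

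Next I would check the unit axioms. We have $\mu_{P^{\ast}\otimes P}\circ(\eta_{P^{\ast}\otimes P}\otimes(P^{\ast}\otimes P))=(P^{\ast}\otimes b\otimes P)\circ(a\otimes P^{\ast}\otimes P)$; after inserting the identity in the form $P^{\ast}\otimes((P^{\ast}\otimes b)\circ(a\otimes P^{\ast}))\otimes P$ the left zig-zag identity collapses this to $\mathrm{id}_{P^{\ast}\otimes P}$, and symmetrically the right unit law follows from the other zig-zag identity. For associativity, both $\mu_{P^{\ast}\otimes P}\circ(\mu_{P^{\ast}\otimes P}\otimes(P^{\ast}\otimes P))$ and $\mu_{P^{\ast}\otimes P}\circ((P^{\ast}\otimes P)\otimes\mu_{P^{\ast}\otimes P})$ unwind to the same morphism $P^{\ast}\otimes b\otimes b\otimes P\colon P^{\ast}\otimes(P\otimes P^{\ast})\otimes(P\otimes P^{\ast})\otimes P\to P^{\ast}\otimes P$: in the first case the outer $\mu$ contracts the leftmost $P\otimes P^{\ast}$ pair and the inner $\mu$ the middle one, in the second case the reverse order, but since the two copies of $b$ act on disjoint tensor factors the results agree; here no triangle identity is even needed, only the interchange of morphisms acting on disjoint factors (the functoriality of $\otimes$). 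No braiding enters any of these computations.

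The main obstacle is bookkeeping rather than conceptual: one must be careful about which tensor slot each component of $a_P$ and $b_P$ sits in when reducing the natural transformation triangle identities to the zig-zag equations, and about applying naturality of $a_P,b_P$ correctly in a strict monoidal category where the ``$-$'' slot is being instantiated at composite objects such as $P^{\ast}\otimes P$ or $(P^{\ast}\otimes P)\otimes(P^{\ast}\otimes P)$. Once the zig-zag equations in terms of $a$ and $b$ are in hand, each verification is a two- or three-line diagram chase, which is why the lemma is stated as having a straightforward proof; accordingly I would simply record the zig-zag identities and indicate the cancellations rather than write out every intermediate morphism.
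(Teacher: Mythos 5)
Your verification is correct and is exactly the straightforward check the paper has in mind (the paper omits the proof entirely, remarking only that it is straightforward): the unit laws are the two zig-zag identities $(P^{\ast}\otimes b_{P}(K))\circ(a_{P}(K)\otimes P^{\ast})=id_{P^{\ast}}$ and $(b_{P}(K)\otimes P)\circ(P\otimes a_{P}(K))=id_{P}$, and associativity is just the interchange law for $\otimes$ applied to two copies of $b_{P}(K)$ acting on disjoint tensor factors. One small point of phrasing: the identification $a_{P}(X)=a_{P}(K)\otimes X$ and $b_{P}(X)=b_{P}(K)\otimes X$ is not a consequence of mere naturality (a morphism $f\colon K\to X$ need not exist, let alone determine $a_{P}(X)$) but is part of what it means for $P\otimes-\dashv P^{\ast}\otimes-$ to be a $\sf{C}$-adjunction between the tensor functors, which is the convention the paper uses throughout (e.g.\ in the proof of \eqref{finitebraid}); with that understood, your reduction to the zig-zag equations and the subsequent cancellations are exactly right.
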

Moreover, it is going to be useful the following lemma.
\begin{lemma}
If $P$ is a finite object in $\sf{C}$, then 
\begin{equation}\label{finitebraid}
(c_{P,P^{\ast}}\otimes P)\circ(P\otimes a_{P}(K))=(P^{\ast}\otimes c_{P,P}^{-1})\circ(a_{P}(K)\otimes P).
\end{equation}
\end{lemma}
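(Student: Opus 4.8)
The plan is to show that identity \eqref{finitebraid} is not really a statement about the $\sf{C}$-adjunction: it holds with \emph{any} morphism in place of $a_{P}(K)\colon K\to P^{\ast}\otimes P$, being a direct consequence of the naturality of the braiding $c$ together with the hexagon axiom. In particular neither the triangle identities relating $a_{P}$ and $b_{P}$ nor the algebra structure on $P^{\ast}\otimes P$ are needed; all that is used about $a_{P}(K)$ is that it is a morphism $K\to P^{\ast}\otimes P$ and that $c_{P,K}=id_{P}$.

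Concretely, I would first apply the (first) hexagon identity $c_{M,N\otimes Q}=(N\otimes c_{M,Q})\circ(c_{M,N}\otimes Q)$ with $M=Q=P$ and $N=P^{\ast}$ to obtain $c_{P,\,P^{\ast}\otimes P}=(P^{\ast}\otimes c_{P,P})\circ(c_{P,P^{\ast}}\otimes P)$, and hence, since $(P^{\ast}\otimes c_{P,P})^{-1}=P^{\ast}\otimes c_{P,P}^{-1}$,
\[c_{P,P^{\ast}}\otimes P=(P^{\ast}\otimes c_{P,P}^{-1})\circ c_{P,\,P^{\ast}\otimes P}.\]
Composing on the right with $P\otimes a_{P}(K)$, then using the naturality of $c$ in its second argument applied to the morphism $a_{P}(K)\colon K\to P^{\ast}\otimes P$, and finally $c_{P,K}=id_{P}$, gives
\[(c_{P,P^{\ast}}\otimes P)\circ(P\otimes a_{P}(K))=(P^{\ast}\otimes c_{P,P}^{-1})\circ c_{P,\,P^{\ast}\otimes P}\circ(P\otimes a_{P}(K))=(P^{\ast}\otimes c_{P,P}^{-1})\circ(a_{P}(K)\otimes P),\]
which is exactly \eqref{finitebraid}.

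I do not expect any real obstacle here; the only thing that requires care is the bookkeeping of braiding conventions, namely checking that it is precisely $c_{P,P}^{-1}$ (and not $c_{P,P}$) that appears on the $P\otimes P$ tensorand, which is forced by inverting the factor $P^{\ast}\otimes c_{P,P}$ produced by the hexagon, and that the hexagon identity invoked is the one expanding the braiding in its right tensor argument (the first of the two hexagon axioms recalled in the Preliminaries).
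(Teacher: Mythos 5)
Your proof is correct and is essentially the same argument as the paper's: the paper inserts $P^{\ast}\otimes(c_{P,P}^{-1}\circ c_{P,P})$ and then collapses $(P^{\ast}\otimes c_{P,P})\circ(c_{P,P^{\ast}}\otimes P)\circ(P\otimes a_{P}(K))$ to $a_{P}(K)\otimes P$ via the (implicit) hexagon identity, naturality of $c$ and $c_{P,K}=id_{P}$, which is exactly the computation you perform after rearranging the hexagon. Your observation that the adjunction data plays no role and that any morphism $K\rightarrow P^{\ast}\otimes P$ would do is accurate and consistent with the paper's proof, which likewise only uses naturality.
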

\begin{proof}
The equality \eqref{finitebraid} follows by:
\begin{align*}
&(c_{P,P^{\ast}}\otimes P)\circ(P\otimes a_{P}(K))\\=&(P^{\ast}\otimes(c_{P,P}^{-1}\circ c_{P,P}))\circ(c_{P,P^{\ast}}\otimes P)\circ(P\otimes a_{P}(K))\;\footnotesize\textnormal{(by the isomorphism condition for $c_{P,P}$)}\\=&(P^{\ast}\otimes c_{P,P}^{-1})\circ(a_{P}(K)\otimes P)\;\footnotesize\textnormal{(by naturality of $c$ and $c_{P,K}=id_{K}$}).\qedhere
\end{align*}
\end{proof}
\section{Brace triples and Hopf braces}\label{sect2}
The aim of this part is to prove that we can characterise Hopf braces in $\sf{C}$ via another structures. This new structures will be known as {brace triples}.
\begin{definition}\label{BTdef}
	Consider $H=(H,\eta_{H},\mu_{H},\varepsilon_{H},\delta_{H},\lambda_{H})$ a Hopf algebra in $\sf{C}$ with $\lambda_{H}$ an isomorphism and let $\gamma_{H}\colon H\otimes H\rightarrow H$ and $T_{H}\colon H\rightarrow H$ be morphisms in $\sf{C}.$ We will say that $(H,\gamma_{H},T_{H})$ is a brace triple if the following conditions hold:
	\begin{itemize}
		\item[(i)] $(\gamma_{H}\otimes H)\circ(H\otimes c_{H,H})\circ(\delta_{H}\otimes H)=(\gamma_{H}\otimes H)\circ(H\otimes c_{H,H})\circ((c_{H,H}\circ\delta_{H})\otimes H).$
		\item[(ii)] $\gamma_{H}$ is a coalgebra morphism, i.e.:
		\begin{itemize}
			\item[(ii.1)] $\delta_{H}\circ\gamma_{H}=(\gamma_{H}\otimes\gamma_{H})\circ(H\otimes c_{H,H}\otimes H)\circ(\delta_{H}\otimes\delta_{H})$,
			 \item[(ii.2)]$\varepsilon_{H}\circ\gamma_{H}=\varepsilon_{H}\otimes\varepsilon_{H}$.
		\end{itemize} 
		\item[(iii)] $\gamma_{H}\circ(H\otimes\mu_{H})=\mu_{H}\circ(\gamma_{H}\otimes\gamma_{H})\circ(H\otimes c_{H,H}\otimes H)\circ(\delta_{H}\otimes H\otimes H)$.
		\item[(iv)] $\gamma_{H}\circ(H\otimes\gamma_{H})=\gamma_{H}\circ((\mu_{H}\circ(H\otimes\gamma_{H})\circ(\delta_{H}\otimes H))\otimes H)$.
		\item[(v)] $\gamma_{H}\circ(\eta_{H}\otimes H)=id_{H}$.
		\item[(vi)] $T_{H}$ is an isomorphism in $\sf{C}$ such that the following equalities are verified:
		\begin{itemize}
			\item[(vi.1)] $\delta_{H}\circ T_{H}=c_{H,H}\circ(T_{H}\otimes T_{H})\circ\delta_{H}$.
			\item[(vi.2)] $\varepsilon_{H}\circ T_{H}=\varepsilon_{H}$
			\item[(vi.3)] $\mu_{H}\circ(H\otimes\gamma_{H})\circ((\delta_{H}\circ T_{H})\otimes H) =\mu_{H}\circ(H\otimes\gamma_{H})\circ(((T_{H}\otimes T_{H})\circ\delta_{H})\otimes H)$.
			\item[(vi.4)]$\gamma_{H}\circ(H\otimes T_{H})\circ\delta_{H}=\lambda_{H}$.
			\item[(vi.5)] $\gamma_{H}\circ (T_{H}\otimes H)\circ\delta_{H}=\lambda_{H}^{-1}\circ T_{H}$.
		\end{itemize}
	\end{itemize}
\end{definition}
\begin{remark}
Given a brace triple $(H,\gamma_{H},T_{H})$, conditions (ii) and (iii) of Definition \ref{BTdef} imply that 
\begin{equation}\label{olde6}
\gamma_{H}\circ(H\otimes\eta_{H})=\varepsilon_{H}\otimes \eta_{H}
\end{equation}
holds by Theorem \ref{th.interest}.
\end{remark}
\begin{remark}
Let $(H,\gamma_{H},T_{H})$ be a brace triple. Note that condition (vi.5) of Definition \ref{BTdef} is equivalent to 
\begin{equation}\label{olde75}
\gamma_{H}\circ(H\otimes T_{H}^{-1})\circ c_{H,H}^{-1}\circ\delta_{H}=\lambda_{H}^{-1}.
\end{equation}
In fact, on the one side, suppose that (vi.5) of Definition \ref{BTdef} holds. Then, we have that:
\begin{align*}
&\lambda_{H}^{-1}\\=&\lambda_{H}^{-1}\circ T_{H}\circ T_{H}^{-1}\;\footnotesize\textnormal{(by the condition of isomorphism for $T_{H}$)}\\=&\gamma_{H}\circ(T_{H}\otimes H)\circ\delta_{H}\circ T_{H}^{-1}\;\footnotesize\textnormal{(by (vi.5) of Definition \ref{BTdef})}\\=&\gamma_{H}\circ(T_{H}\otimes H)\circ(T_{H}^{-1}\otimes T_{H}^{-1})\circ c_{H,H}^{-1}\circ\delta_{H}\;\footnotesize\textnormal{(by (vi.1) of Definition \ref{BTdef} and the isomorphism condition}\\&\footnotesize\textnormal{for $T_{H}$ and $c_{H,H}$)}\\=&\gamma_{H}\circ(H\otimes T_{H}^{-1})\circ c_{H,H}^{-1}\circ\delta_{H}\;\footnotesize\textnormal{(by the condition of isomorphism for $T_{H}$)}.
\end{align*}
On the other side, suppose now that \eqref{olde75} holds. Then,
\begin{align*}
&\lambda_{H}^{-1}\circ T_{H}\\=&\gamma_{H}\circ(H\otimes T_{H}^{-1})\circ c_{H,H}^{-1}\circ\delta_{H}\circ T_{H}\;\footnotesize\textnormal{(by \eqref{olde75})}\\=&\gamma_{H}\circ(H\otimes T_{H}^{-1})\circ c_{H,H}^{-1}\circ c_{H,H}\circ (T_{H}\otimes T_{H})\circ\delta_{H}\;\footnotesize\textnormal{(by (vi.1) of Definition \ref{BTdef})}\\=&\gamma_{H}\circ(T_{H}\otimes H)\circ\delta_{H}\;\footnotesize\textnormal{(by the isomorphism condition for $c_{H,H}$ and $T_{H}$)}.
\end{align*}
\end{remark}
\begin{definition}
	Let $(H,\gamma_{H},T_{H})$ and $(B,\gamma_{B}, T_{B})$ be brace triples and $f\colon H\rightarrow B$ a morphism in $\sf{C}$. We will say that $f$ is a morphism of brace triples if $f$ is a Hopf algebra morphism and
		\begin{equation}\label{CondMorBT} f\circ\gamma_{H}=\gamma_{B}\circ(f\otimes f)
		\end{equation}
	holds.
\end{definition}
Brace triples and their morphisms form a category which we will denote by $\sf{BT}$. 

\begin{remark}\label{TB-cocTB}
Suppose that $(H,\gamma_{H},T_{H})$ is a brace triple with $H$ cocommutative. Under this condition, note that (i) of Definition \ref{BTdef} always holds and take also into account that (vi.1) becomes $\delta_{H}\circ T_{H}=(T_{H}\otimes T_{H})\circ\delta_{H}$. This implies that (vi.3) always holds in the cocommutative setting. Moreover, $\lambda_{H}^{-1}=\lambda_{H}$  (due to $\lambda_{H}\circ\lambda_{H}=id_{H}$), so this implies that (vi.5) becomes $\gamma_{H}\circ(T_{H}\otimes H)\circ\delta_{H}=\lambda_{H}\circ T_{H}$. Therefore, as a consequence of (vi.4) and (vi.5), we obtain that $$\gamma_{H}\circ (H\otimes T_{H})\circ\delta_{H}=\lambda_{H}=\gamma_{H}\circ(T_{H}\otimes H)\circ\delta_{H}\circ T_{H}^{-1}.$$

Cocommutative brace triples constitute a full subcategory of $\sf{BT}$ which we will denote by $\sf{cocBT}.$
\end{remark}
\begin{definition}\label{sHbr}
Let $\mathbb{H}$ be a Hopf brace in $\sf{C}$. We will say that $\mathbb{H}$ is a \texttt{s}-Hopf brace if the following conditions hold:
\begin{itemize}
\item[(i)] $(\Gamma_{H_{1}}\otimes H)\circ(H\otimes c_{H,H})\circ(\delta_{H}\otimes H)=(\Gamma_{H_{1}}\otimes H)\circ(H\otimes c_{H,H})\circ((c_{H,H}\circ\delta_{H})\otimes H)$.
\item[(ii)] $\lambda_{H}^{1}$ and $\lambda_{H}^{2}$ are isomorphisms in $\sf{C}$ such that the following conditions hold:
\begin{itemize}
\item[(ii.1)] $\mu_{H}^{1}\circ(H\otimes\Gamma_{H_{1}})\circ((\delta_{H}\circ\lambda_{H}^{2})\otimes H)=\mu_{H}^{1}\circ(H\otimes\Gamma_{H_{1}})\circ(((\lambda_{H}^{2}\otimes\lambda_{H}^{2})\circ\delta_{H})\otimes H)$.
\item[(ii.2)] $\Gamma_{H_{1}}\circ(\lambda_{H}^{2}\otimes H)\circ\delta_{H}=(\lambda_{H}^{1})^{-1}\circ\lambda_{H}^{2}.$
\end{itemize} 
\end{itemize}
\end{definition}
With the obvious morphisms, \texttt{s}-Hopf braces constitute a full subcategory of $\sf{HBr}$, and we will denote it by \texttt{s}$\sf{HBr}$.
\begin{remark}\label{remark cocom class GH1}
Let's assume that $\sf{C}$ is symmetric. Under this assumption, condition (i) of Definition \ref{sHbr} means that $(H_{1},\Gamma_{H_{1}})$ is in the cocommutativity class of $H_{2}$ following the notion introduced in \cite[Definition 2.1 and Definition 2.2]{CCH}.
\end{remark}
\begin{remark}
Note that, for a \texttt{s}-Hopf brace $\mathbb{H}$, condition (ii.2) of Definition \ref{sHbr} is equivalent to 
\begin{equation}\label{oldz22}
\Gamma_{H_{1}}\circ(H\otimes (\lambda_{H}^{2})^{-1})\circ c_{H,H}^{-1}\circ\delta_{H}=(\lambda_{H}^{1})^{-1}.
\end{equation}
Indeed, suppose that (ii.2) of Definition \ref{sHbr} holds, then:
\begin{align*}
&(\lambda_{H}^{1})^{-1}\\=&(\lambda_{H}^{1})^{-1}\circ\lambda_{H}^{2}\circ(\lambda_{H}^{2})^{-1}\;\footnotesize\textnormal{(by the condition of isomorphism for $\lambda_{H}^{2}$)}\\=&\Gamma_{H_{1}}\circ(\lambda_{H}^{2}\otimes H)\circ\delta_{H}\circ(\lambda_{H}^{2})^{-1}\;\footnotesize\textnormal{(by (ii.2) of Definition \ref{sHbr})}\\=&\Gamma_{H_{1}}\circ(\lambda_{H}^{2}\otimes H)\circ((\lambda_{H}^{2})^{-1}\otimes(\lambda_{H}^{2})^{-1})\circ c_{H,H}^{-1}\circ\delta_{H}\;\footnotesize\textnormal{(by \eqref{a-antip} and the isomorphism condition}\\&\footnotesize\textnormal{for $\lambda_{H}^{2}$ and $c_{H,H}$)}\\=&\Gamma_{H_{1}}\circ(H\otimes(\lambda_{H}^{2})^{-1})\circ c_{H,H}^{-1}\circ\delta_{H}\;\footnotesize\textnormal{(by the condition of isomorphism for $\lambda_{H}^{2}$)}.
\end{align*}
On the other hand, we have that:
\begin{align*}
&(\lambda_{H}^{1})^{-1}\circ\lambda_{H}^{2}\\=&\Gamma_{H_{1}}\circ(H\otimes(\lambda_{H}^{2})^{-1})\circ c_{H,H}^{-1}\circ\delta_{H}\circ\lambda_{H}^{2}\;\footnotesize\textnormal{(by \eqref{oldz22})}\\=&\Gamma_{H_{1}}\circ(H\otimes(\lambda_{H}^{2})^{-1})\circ c_{H,H}^{-1}\circ c_{H,H}\circ(\lambda_{H}^{2}\otimes\lambda_{H}^{2})\circ\delta_{H}\;\footnotesize\textnormal{(by \eqref{a-antip})}\\=&\Gamma_{H_{1}}\circ(\lambda_{H}^{2}\otimes H)\circ\delta_{H}\;\footnotesize\textnormal{(by the isomorphism condition for $\lambda_{H}^{2}$ and $c_{H,H}$)}.
\end{align*}
\end{remark}
\begin{remark}\label{sHB-cocHB} Consider $\mathbb{H}$ a cocommutative \texttt{s}-Hopf brace. Under cocommutativity assumption, note that (i) of Definition \ref{sHbr} always holds. In addition, $\delta_{H}\circ\lambda_{H}^{2}=(\lambda_{H}^{2}\otimes\lambda_{H}^{2})\circ\delta_{H}$, so (ii.1) always holds too. Moreover, under cocommutativity conditions, $\lambda_{H}^{k}$ is an isomorphism with inverse itself for all $k=1,2$. Therefore, condition (ii.2) of Definition \ref{sHbr} is satisfied. Indeed:
\begin{align*}
&\Gamma_{H_{1}}\circ(\lambda_{H}^{2}\otimes H)\circ\delta_{H}\\=&\mu_{H}^{1}\circ(\lambda_{H}^{1}\otimes\mu_{H}^{2})\circ((\delta_{H}\circ\lambda_{H}^{2})\otimes H)\circ\delta_{H}\;\footnotesize\textnormal{(by definition of $\Gamma_{H_{1}}$)}\\=&\mu_{H}^{1}\circ(\lambda_{H}^{1}\otimes\mu_{H}^{2})\circ(((\lambda_{H}^{2}\otimes\lambda_{H}^{2})\circ\delta_{H})\otimes H)\circ\delta_{H}\;\footnotesize\textnormal{(by \eqref{a-antip} and cocommutativity of $\delta_{H}$)}\\=&\mu_{H}^{1}\circ((\lambda_{H}^{1}\circ\lambda_{H}^{2})\otimes (\lambda_{H}^{2}\ast id_{H}))\circ\delta_{H}\;\footnotesize\textnormal{(by coassociativity of $\delta_{H}$)}\\=&\mu_{H}^{1}\circ((\lambda_{H}^{1}\circ\lambda_{H}^{2})\otimes (\eta_{H}\circ\varepsilon_{H}))\circ\delta_{H}\;\footnotesize\textnormal{(by \eqref{antipode})}\\=&\lambda_{H}^{1}\circ\lambda_{H}^{2}\;\footnotesize\textnormal{(by (co)unit property)}.
\end{align*}
So, under cocommutativity supposition, every Hopf brace is a \texttt{s}-Hopf brace, that is to say, $\texttt{s}\sf{HBr}=\sf{cocHBr}$.
\end{remark}

In this first result, we will prove that every brace triple induce a \texttt{s}-Hopf brace in $\sf{C}$.
\begin{theorem}\label{Prop Brace TB}
	Let $(H,\gamma_{H},T_{H})$ be a brace triple in $\sf{C}$. Then $\mathbb{H}_{\sf{BT}}=(H,H_{\sf{BT}})$
	is a \texttt{s}-Hopf brace in $\sf{C}$ being $H_{\sf{BT}}$ the Hopf algebra structure defined by $H_{\sf{BT}}= (H,\eta_{H},\mu_{H}^{\scalebox{0.6}{\sf{BT}}},\varepsilon_{H},\delta_{H},T_{H}),$
	where $\mu_{H}^{\scalebox{0.6}{\sf{BT}}}\coloneqq\mu_{H}\circ(H\otimes\gamma_{H})\circ(\delta_{H}\otimes H)$.
\end{theorem}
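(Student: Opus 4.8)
The goal is to verify that $\mathbb{H}_{\sf{BT}} = (H, H_{\sf{BT}})$ is an \texttt{s}-Hopf brace, where $H_1 = H$ is the original Hopf algebra and $H_2 = H_{\sf{BT}}$ has the same coalgebra structure but the new product $\mu_H^{\scalebox{0.6}{\sf{BT}}} = \mu_H \circ (H \otimes \gamma_H) \circ (\delta_H \otimes H)$, unit $\eta_H$ and antipode $T_H$. The plan is to proceed in four stages: (1) show $H_{\sf{BT}}$ is a Hopf algebra; (2) verify the Hopf brace compatibility condition (iii) of Definition \ref{H-brace}, which amounts to identifying $\Gamma_{H_1}$ with $\gamma_H$; (3) translate conditions (i), (vi.3), (vi.5) of Definition \ref{BTdef} into conditions (i), (ii.1), (ii.2) of Definition \ref{sHbr}; and (4) check the isomorphism requirements on the antipodes.

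For stage (1): the unit axiom for $\mu_H^{\scalebox{0.6}{\sf{BT}}}$ on the right comes from (v) of Definition \ref{BTdef} ($\gamma_H \circ (\eta_H \otimes H) = id_H$) plus the counit property, and on the left from \eqref{olde6} ($\gamma_H \circ (H \otimes \eta_H) = \varepsilon_H \otimes \eta_H$). Associativity of $\mu_H^{\scalebox{0.6}{\sf{BT}}}$ is the crucial computation: expanding both sides and using (iii) (which says $\gamma_H$ is ``multiplicative in the second variable'' in the twisted sense) together with (iv) (the ``twisted associativity'' of $\gamma_H$), the two sides should coincide; this is where condition (iv) is designed to be used and I expect it to be the most delicate bookkeeping. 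That $\delta_H, \varepsilon_H$ are algebra morphisms for $\mu_H^{\scalebox{0.6}{\sf{BT}}}$ follows from (ii) ($\gamma_H$ a coalgebra morphism) together with the fact that $\delta_H, \varepsilon_H$ are already algebra morphisms for $\mu_H$; here one also needs condition (i) of Definition \ref{BTdef} to move the braiding past $\gamma_H$ correctly when checking $\delta_H \circ \mu_H^{\scalebox{0.6}{\sf{BT}}}$. Finally, $T_H$ is the antipode: using (vi.1), (vi.2) one checks $T_H$ is a coalgebra morphism for $\delta_H$, and the convolution identities $id_H * T_H = \eta_H \circ \varepsilon_H = T_H * id_H$ with respect to $\mu_H^{\scalebox{0.6}{\sf{BT}}}$ should reduce, via (vi.4) and \eqref{olde75} (equivalently (vi.5)), to the antipode identity for $\lambda_H$ with respect to $\mu_H$. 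Indeed (vi.4) says $\gamma_H \circ (H \otimes T_H) \circ \delta_H = \lambda_H$, which is exactly the ingredient needed so that $\mu_H^{\scalebox{0.6}{\sf{BT}}} \circ (H \otimes T_H) \circ \delta_H = \mu_H \circ (H \otimes \lambda_H) \circ \delta_H = \eta_H \circ \varepsilon_H$.

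For stage (2): by \eqref{eb2}, in any Hopf brace $\mu_H^2 = \mu_H^1 \circ (H \otimes \Gamma_{H_1}) \circ (\delta_H \otimes H)$; comparing with the definition of $\mu_H^{\scalebox{0.6}{\sf{BT}}}$ we expect $\Gamma_{H_1} = \gamma_H$. I would establish this by first verifying the brace compatibility (iii) of Definition \ref{H-brace} directly from conditions (iii) and (iv) of Definition \ref{BTdef} — this is essentially the same computation as associativity but organized to exhibit the $\Gamma_{H_1}$-term — and then reading off $\Gamma_{H_1} = \mu_H \circ (\lambda_H \otimes \mu_H^{\scalebox{0.6}{\sf{BT}}}) \circ (\delta_H \otimes H)$ and simplifying using (vi.4) (or \eqref{GammaL2}-style manipulations) to get $\Gamma_{H_1} = \gamma_H$. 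Once this identification is in hand, stage (3) is immediate: condition (i) of Definition \ref{sHbr} is literally condition (i) of Definition \ref{BTdef} with $\Gamma_{H_1}$ in place of $\gamma_H$; condition (ii.1) of Definition \ref{sHbr} is (vi.3) of Definition \ref{BTdef} with $\lambda_H^2 = T_H$; and condition (ii.2) of Definition \ref{sHbr} is (vi.5) of Definition \ref{BTdef}, noting that $\lambda_H^1 = \lambda_H$ and $\lambda_H^2 = T_H$, so $(\lambda_H^1)^{-1} \circ \lambda_H^2 = \lambda_H^{-1} \circ T_H$. Stage (4), the isomorphism requirement on $\lambda_H^1 = \lambda_H$ and $\lambda_H^2 = T_H$, holds because $\lambda_H$ is assumed invertible in Definition \ref{BTdef} and $T_H$ is assumed invertible in (vi).

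The main obstacle I anticipate is the associativity of $\mu_H^{\scalebox{0.6}{\sf{BT}}}$ (and the closely parallel verification of the brace compatibility in stage (2)): both require carefully expanding a composite involving two nested copies of $\gamma_H$, three copies of $\delta_H$, and several braidings, then applying (iii) and (iv) at exactly the right spot. Condition (iv) — $\gamma_H \circ (H \otimes \gamma_H) = \gamma_H \circ ((\mu_H \circ (H \otimes \gamma_H)\circ(\delta_H \otimes H)) \otimes H) = \gamma_H \circ (\mu_H^{\scalebox{0.6}{\sf{BT}}} \otimes H)$ — is precisely the associativity obstruction reformulated, so the computation should close, but the diagrammatic manipulation is lengthy. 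Everything else is a routine application of the coalgebra-morphism property of $\gamma_H$ and $T_H$, naturality of the braiding, and the antipode axioms.
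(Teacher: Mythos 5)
Your plan follows the paper's proof essentially step for step: unit via (v) of Definition \ref{BTdef} and \eqref{olde6}, associativity via (i), (ii.1), (iii) and (iv), the coalgebra-morphism property of $\mu_{H}^{\scalebox{0.6}{\sf{BT}}}$ via (i) and (ii), the antipode identities via (vi.1)--(vi.5), the identification $\Gamma_{H}^{\scalebox{0.6}{\sf{BT}}}=\gamma_{H}$ (which in fact needs only $\lambda_{H}\ast id_{H}=\eta_{H}\circ\varepsilon_{H}$ and coassociativity, not (vi.4)), and then the \texttt{s}-Hopf brace axioms by direct translation. Two minor corrections that do not affect the argument: $T_{H}$ is anti-comultiplicative by (vi.1), not a coalgebra morphism (and this is not needed anyway); and the left-inverse identity $T_{H}\ast_{\scalebox{0.6}{\sf{BT}}}id_{H}=\varepsilon_{H}\otimes\eta_{H}$ does not follow from (vi.5) alone but also requires (vi.3) to replace $\delta_{H}\circ T_{H}$ by $(T_{H}\otimes T_{H})\circ\delta_{H}$ under $\mu_{H}\circ(H\otimes\gamma_{H})$, together with (vi.1), (vi.2) and the antipode identity for $H^{cop}$.
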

	\begin{proof}
		At first we will prove that $H_{\sf{BT}}$ is a Hopf algebra in $\sf{C}$. Note that we already know that $(H,\varepsilon_{H},\delta_{H})$ is a coalgebra in $\sf{C}$ and that $\eta_{H}$ is a coalgebra morphism. We begin by proving the unit property for $\mu_{H}^{\scalebox{0.6}{\sf{BT}}}$. Indeed, on the one side,
		\begin{align*}
			&\mu_{H}^{\scalebox{0.6}{\sf{BT}}}\circ(\eta_{H}\otimes H)\\=&\mu_{H}\circ(H\otimes\gamma_{H})\circ((\delta_{H}\circ\eta_{H})\otimes H)\;\footnotesize\textnormal{(by definition of $\mu_{H}^{\scalebox{0.6}{\sf{BT}}}$)}\\=&\mu_{H}\circ(\eta_{H}\otimes(\gamma_{H}\circ(\eta_{H}\otimes H)))\;\footnotesize\textnormal{(by the condition of coalgebra morphism for $\eta_{H}$)}\\=&\mu_{H}\circ(\eta_{H}\otimes H)\;\footnotesize\textnormal{(by (v) of Definition \ref{BTdef})}\\=&id_{H}\;\footnotesize\textnormal{(by unit property)},
		\end{align*}
		and, on the other side,
		\begin{align*}
			&\mu_{H}^{\scalebox{0.6}{\sf{BT}}}\circ(H\otimes\eta_{H})\\=&\mu_{H}\circ(H\otimes(\gamma_{H}\circ(H\otimes\eta_{H})))\circ\delta_{H}\;\footnotesize\textnormal{(by definition of $\mu_{H}^{\scalebox{0.6}{\sf{BT}}}$)}\\=&\mu_{H}\circ(H\otimes\varepsilon_{H}\otimes H)\circ(\delta_{H}\otimes\eta_{H})\;\footnotesize\textnormal{(by \eqref{olde6})}\\=&id_{H}\;\footnotesize\textnormal{(by (co)unit property)}.
		\end{align*}
The associativity of $\mu_{H}^{\scalebox{0.6}{\sf{BT}}}$ follows by
		\begin{align*}
			&\mu_{H}^{\scalebox{0.6}{\sf{BT}}}\circ(\mu_{H}^{\scalebox{0.6}{\sf{BT}}}\otimes H)\\=&\mu_{H}\circ(H\otimes \gamma_{H})\circ((\delta_{H}\circ\mu_{H})\otimes H)\circ(H\otimes\gamma_{H}\otimes H)\circ(\delta_{H}\otimes H\otimes H)
			\;\footnotesize\textnormal{(by definition of $\mu_{H}^{\scalebox{0.6}{\sf{BT}}}$)}\\=&\mu_{H}\circ(H\otimes\gamma_{H})\circ(((\mu_{H}\otimes\mu_{H})\circ(H\otimes c_{H,H}\otimes H)\circ(\delta_{H}\otimes(\delta_{H}\circ\gamma_{H})))\otimes H)\circ(\delta_{H}\otimes H\otimes H)\\&\footnotesize\textnormal{(by the condition of coalgebra morphism for $\mu_{H}$)}\\=& \mu_{H}\circ(H\otimes\gamma_{H})\circ(((\mu_{H}\otimes\mu_{H})\circ(H\otimes c_{H,H}\otimes H))\otimes H)\circ(\delta_{H}\otimes((\gamma_{H}\otimes\gamma_{H})\circ(H\otimes c_{H,H}\otimes H)\\&\circ(\delta_{H}\otimes\delta_{H})))\otimes H)\circ(\delta_{H}\otimes H\otimes H)\;\footnotesize\textnormal{(by (ii.1) of Definition \ref{BTdef})}\\=& \mu_{H}\circ(\mu_{H}\otimes(\gamma_{H}\circ(\mu_{H}\otimes H)))\circ(H\otimes((\gamma_{H}\otimes H)\circ(H\otimes c_{H,H})\circ((c_{H,H}\circ\delta_{H})\otimes H))\otimes\gamma_{H}\otimes H)\\&\circ(H\otimes H\otimes c_{H,H}\otimes H\otimes H)\circ(((H\otimes\delta_{H})\circ\delta_{H})\otimes\delta_{H}\otimes H)\;\footnotesize\textnormal{(by naturality of $c$ and coassociativity of $\delta_{H}$)}\\=& \mu_{H}\circ(\mu_{H}\otimes(\gamma_{H}\circ(\mu_{H}\otimes H)))\circ(H\otimes((\gamma_{H}\otimes H)\circ(H\otimes c_{H,H})\circ(\delta_{H}\otimes H))\otimes\gamma_{H}\otimes H)\\&\circ(H\otimes H\otimes c_{H,H}\otimes H\otimes H)\circ(((H\otimes\delta_{H})\circ\delta_{H})\otimes\delta_{H}\otimes H)\;\footnotesize\textnormal{(by (i) of Definition \ref{BTdef})}\\=& \mu_{H}\circ((\mu_{H}\circ(H\otimes\gamma_{H}))\otimes(\gamma_{H}\circ((\mu_{H}\circ(H\otimes\gamma_{H})\circ(\delta_{H}\otimes H))\otimes H)))\circ(H\otimes((H\otimes c_{H,H})\\&\circ(\delta_{H}\otimes H))\otimes H\otimes H)\circ(\delta_{H}\otimes\delta_{H}\otimes H)\;\footnotesize\textnormal{(by naturality of $c$ and coassociativity of $\delta_{H}$)}\\=& \mu_{H}\circ(H\otimes(\mu_{H}\circ(\gamma_{H}\otimes\gamma_{H})\circ(H\otimes c_{H,H}\otimes H)\circ(\delta_{H}\otimes H\otimes H)))\circ(H\otimes H\otimes H\otimes \gamma_{H})\\&\circ(\delta_{H}\otimes\delta_{H}\otimes H)\;\footnotesize\textnormal{(by (iv) of Definition \ref{BTdef} and associativity of $\mu_{H}$)}\\=& \mu_{H}\circ(H\otimes\gamma_{H})\circ(\delta_{H}\otimes(\mu_{H}\circ(H\otimes \gamma_{H})\circ(\delta_{H}\otimes H))))\;\footnotesize\textnormal{(by (iii) of Definition \ref{BTdef})}\\=& \mu_{H}^{\scalebox{0.6}{\sf{BT}}}\circ(H\otimes\mu_{H}^{\scalebox{0.6}{\sf{BT}}})\;\footnotesize\textnormal{(by definition of $\mu_{H}^{\scalebox{0.6}{\sf{BT}}}$)}.
		\end{align*}

Also, $\mu_{H}^{\scalebox{0.6}{\sf{BT}}}$ is a coalgebra morphism. On the one hand, by the condition of coalgebra morphism for $\mu_{H}$, (ii.2) of Definition \ref{BTdef} and the counit property, it is straightforward to compute that $\varepsilon_{H}\circ\mu_{H}^{\scalebox{0.6}{\sf{BT}}}=\varepsilon_{H}\otimes\varepsilon_{H}$ and, on the other hand,
		\begin{align*}
			&\delta_{H}\circ\mu_{H}^{\scalebox{0.6}{\sf{BT}}}\\=& \delta_{H}\circ\mu_{H}\circ(H\otimes\gamma_{H})\circ(\delta_{H}\otimes H)\;\footnotesize\textnormal{(by definition of $\mu_{H}^{\scalebox{0.6}{\sf{BT}}}$)}\\=& (\mu_{H}\otimes\mu_{H})\circ(H\otimes c_{H,H}\otimes H)\circ(\delta_{H}\otimes(\delta_{H}\circ\gamma_{H}))\circ(\delta_{H}\otimes H)\;\footnotesize\textnormal{(by the condition of coalgebra morphism}\\&\footnotesize\textnormal{for $\mu_{H}$)}\\=& (\mu_{H}\otimes\mu_{H})\circ(H\otimes c_{H,H}\otimes H)\circ(\delta_{H}\otimes((\gamma_{H}\otimes\gamma_{H})\circ(H\otimes c_{H,H}\otimes H)\circ(\delta_{H}\otimes\delta_{H})))\circ(\delta_{H}\otimes H)\\&\footnotesize\textnormal{(by (ii.1) of Definition \ref{BTdef})}\\=& (\mu_{H}\otimes\mu_{H})\circ(H\otimes((\gamma_{H}\otimes H)\circ(H\otimes c_{H,H})\circ((c_{H,H}\circ\delta_{H})\otimes H))\otimes\gamma_{H})\circ(H\otimes H\otimes c_{H,H}\otimes H)\\&\circ(((H\otimes \delta_{H})\circ\delta_{H})\otimes\delta_{H})\;\footnotesize\textnormal{(by naturality of $c$ and coassociativity of $\delta_{H}$)}\\=& (\mu_{H}\otimes\mu_{H})\circ(H\otimes((\gamma_{H}\otimes H)\circ(H\otimes c_{H,H})\circ(\delta_{H}\otimes H))\otimes\gamma_{H})\circ(H\otimes H\otimes c_{H,H}\otimes H)\\&\circ(((H\otimes \delta_{H})\circ\delta_{H})\otimes\delta_{H})\;\footnotesize\textnormal{(by (i) of Definition \ref{BTdef})}\\=& ((\mu_{H}\circ(H\otimes\gamma_{H})\circ(\delta_{H}\otimes H))\otimes(\mu_{H}\circ(H\otimes\gamma_{H})\circ(\delta_{H}\otimes H)))\circ(H\otimes c_{H,H}\otimes H)\circ(\delta_{H}\otimes \delta_{H})\\&\footnotesize\textnormal{(by coassociativity of $\delta_{H}$ and naturality of $c$)}\\=& (\mu_{H}^{\scalebox{0.6}{\sf{BT}}}\otimes\mu_{H}^{\scalebox{0.6}{\sf{BT}}})\circ(H\otimes c_{H,H}\otimes H)\circ(\delta_{H}\otimes\delta_{H})\;\footnotesize\textnormal{(by definition of $\mu_{H}^{\scalebox{0.6}{\sf{BT}}}$)}.
		\end{align*}

So, $H_{\sf{BT}}$ is a bialgebra in $\sf{C}$. From now on, we will denote by $\ast_{\scalebox{0.6}{\sf{BT}}}$ the convolution in $\mathcal{H}(H,H_{\sf{BT}})$. 

The conditions for $T_{H}$ to be the antipode for $H_{\sf{BT}}$ follows from the following facts. First note that
		\begin{align*}
			&id_{H}\ast_{\scalebox{0.6}{\sf{BT}}}  T_{H}\\=& \mu_{H}^{\scalebox{0.6}{\sf{BT}}}\circ(H\otimes T_{H})\circ\delta_{H}\;\footnotesize\textnormal{(by definition of $\ast_{\scalebox{0.6}{\sf{BT}}} $)}\\=& \mu_{H}\circ (H\otimes\gamma_{H})\circ(\delta_{H}\otimes T_{H})\circ\delta_{H}\;\footnotesize\textnormal{(by definition of $\mu_{H}^{\scalebox{0.6}{\sf{BT}}}$)}\\=& \mu_{H}\circ(H\otimes(\gamma_{H}\circ(H\otimes T_{H})\circ\delta_{H}))\circ\delta_{H}\;\footnotesize\textnormal{(by coassociativity of $\delta_{H}$)}\\=& id_{H}\ast\lambda_{H}\;\footnotesize\textnormal{(by (vi.4) of Definition \ref{BTdef})}\\=& \varepsilon_{H}\otimes\eta_{H}\;\footnotesize\textnormal{(by \eqref{antipode})}.
		\end{align*}
On the other side,
		\begin{align*}
			&T_{H}\ast_{\scalebox{0.6}{\sf{BT}}}  id_{H}\\=& \mu_{H}^{\scalebox{0.6}{\sf{BT}}}\circ(T_{H}\otimes H)\circ\delta_{H}\;\footnotesize\textnormal{(by definition of $\ast_{\scalebox{0.6}{\sf{BT}}} $)}\\=& \mu_{H}\circ(H\otimes\gamma_{H})\circ((\delta_{H}\circ T_{H})\otimes H)\circ\delta_{H}\;\footnotesize\textnormal{(by definition of $\mu_{H}^{\scalebox{0.6}{\sf{BT}}}$)}\\=& \mu_{H}\circ(H\otimes \gamma_{H})\circ(((T_{H}\otimes T_{H})\circ\delta_{H})\otimes H)\circ\delta_{H}\;\footnotesize\textnormal{(by (vi.3) of Definition \ref{BTdef})}\\=& \mu_{H}\circ(T_{H}\otimes(\gamma_{H}\circ(T_{H}\otimes H)\circ\delta_{H}))\circ\delta_{H}\;\footnotesize\textnormal{(by coassociativity of $\delta_{H}$)}\\=& \mu_{H}\circ(T_{H}\otimes(\lambda_{H}^{-1}\circ T_{H}))\circ\delta_{H}\;\footnotesize\textnormal{(by (vi.5) of Definition \ref{BTdef})}\\=& \mu_{H}\circ(H\otimes\lambda_{H}^{-1})\circ c_{H,H}^{-1}\circ\delta_{H}\circ T_{H}\;\footnotesize\textnormal{(by (vi.1) of Definition \ref{BTdef} and the condition of isomorphism for $c_{H,H}$)}\\=& \eta_{H}\circ\varepsilon_{H}\circ T_{H}\;\footnotesize\textnormal{(by \eqref{antipode} for $H^{cop}$)}\\=& \varepsilon_{H}\otimes\eta_{H}\;\footnotesize\textnormal{(by (vi.2) of Definition \ref{BTdef})}
		\end{align*}
Therefore, $H_{\sf{BT}}$ is a Hopf algebra in $\sf{C}$.

	To conclude the proof we have to show that (iii) of Definition \ref{H-brace} holds. Note that 
	\begin{equation}\label{Gbt=gamma}
	\Gamma_{H}^{\scalebox{0.6}{\sf{BT}}}=\gamma_{H}
	\end{equation}
	holds. Indeed,
		\begin{align*}
			&\Gamma_{H}^{\scalebox{0.6}{\sf{BT}}}\\=&\mu_{H}\circ(\lambda_{H}\otimes \mu_{H}^{\scalebox{0.6}{\sf{BT}}})\circ(\delta_{H}\otimes H)\;\footnotesize\textnormal{(by definition of $\Gamma_{H}^{\scalebox{0.6}{\sf{BT}}}$)}\\=& \mu_{H}\circ(\lambda_{H}\otimes(\mu_{H}\circ(H\otimes\gamma_{H})\circ(\delta_{H}\otimes H)))\circ(\delta_{H}\otimes H)\;\footnotesize\textnormal{(by definition of $\mu_{H}^{\scalebox{0.6}{\sf{BT}}}$)}\\=& \mu_{H}\circ((\lambda_{H}\ast id_{H})\otimes\gamma_{H})\circ(\delta_{H}\otimes H)\;\footnotesize\textnormal{(by associativity of $\mu_{H}$ and coassociativity of $\delta_{H}$)}\\=& \gamma_{H}\;\footnotesize\textnormal{(by \eqref{antipode} and (co)unit property)}.
		\end{align*}
		Consequently,
		\begin{align*}
			&\mu_{H}\circ(\mu_{H}^{\scalebox{0.6}{\sf{BT}}}\otimes\Gamma_{H}^{\scalebox{0.6}{\sf{BT}}})\circ(H\otimes c_{H,H}\otimes H)\circ(\delta_{H}\otimes H\otimes H)\\=& \mu_{H}\circ((\mu_{H}\circ(H\otimes\gamma_{H})\circ(\delta_{H}\otimes H))\otimes\gamma_{H})\circ(H\otimes c_{H,H}\otimes H)\circ(\delta_{H}\otimes H\otimes H)\\&\footnotesize\textnormal{(by definition of $\mu_{H}^{\scalebox{0.6}{\sf{BT}}}$ and \eqref{Gbt=gamma})}\\=& \mu_{H}\circ(H\otimes(\mu_{H}\circ(\gamma_{H}\otimes\gamma_{H})\circ(H\otimes c_{H,H}\otimes H)\circ(\delta_{H}\otimes H\otimes H)))\circ(\delta_{H}\otimes H\otimes H)\\&\footnotesize\textnormal{(by associativity of $\mu_{H}$ and coassociativity of $\delta_{H}$)}\\=&  \mu_{H}\circ(H\otimes\gamma_{H})\circ(\delta_{H}\otimes\mu_{H})\;\footnotesize\textnormal{(by (iii) of Definition \ref{BTdef})}\\=& \mu_{H}^{\scalebox{0.6}{\sf{BT}}}\circ(H\otimes\mu_{H})\;\footnotesize\textnormal{(by definition of $\mu_{H}^{\scalebox{0.6}{\sf{BT}}}$)}.
		\end{align*}
		
	Finally, by \eqref{Gbt=gamma} and thanks to axioms (i), (vi), (vi.3) and (vi.5) of Definition \ref{BTdef}, conditions (i), (ii), (ii.1) and (ii.2) of Definition \ref{sHbr} are obvious.
	\end{proof}
\begin{remark}
When $H$ is a cocommutative Hopf algebra, we recover \cite[Remark 4.5]{GGV}.
\end{remark}
\begin{corollary}
Let $(H,\gamma_{H},T_{H})$ be a brace triple in $\sf{C}$. Then, $(H,\gamma_{H})$ is a left $H_{\sf{BT}}$-module algebra.
\end{corollary}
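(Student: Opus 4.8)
The plan is to show that each of the four conditions defining a left $H_{\sf{BT}}$-module algebra for $(H,\gamma_{H})$ is, almost verbatim, one of the axioms of the brace triple $(H,\gamma_{H},T_{H})$. Recall from Theorem \ref{Prop Brace TB} that $H_{\sf{BT}}$ is a Hopf algebra in $\sf{C}$ sharing the coalgebra structure $(H,\varepsilon_{H},\delta_{H})$ with $H$ and having product $\mu_{H}^{\scalebox{0.6}{\sf{BT}}}=\mu_{H}\circ(H\otimes\gamma_{H})\circ(\delta_{H}\otimes H)$, so that the relevant convolution and coproduct are those of $H$.

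First I would check that $(H,\gamma_{H})$ is a left $H_{\sf{BT}}$-module. The unit condition $\gamma_{H}\circ(\eta_{H}\otimes H)=id_{H}$ is exactly (v) of Definition \ref{BTdef}. For the associativity condition, one has $\gamma_{H}\circ(\mu_{H}^{\scalebox{0.6}{\sf{BT}}}\otimes H)=\gamma_{H}\circ((\mu_{H}\circ(H\otimes\gamma_{H})\circ(\delta_{H}\otimes H))\otimes H)$ by the definition of $\mu_{H}^{\scalebox{0.6}{\sf{BT}}}$, and this equals $\gamma_{H}\circ(H\otimes\gamma_{H})$ by (iv) of Definition \ref{BTdef}; this is the single place where a brace triple axiom is genuinely invoked. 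Since $\gamma_{H}$ is a coalgebra morphism by (ii) of Definition \ref{BTdef}, the module $(H,\gamma_{H})$ is in fact a left $H_{\sf{BT}}$-module coalgebra.

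Next I would verify that $\mu_{H}$ is a morphism of left $H_{\sf{BT}}$-modules. The left $H_{\sf{BT}}$-action on $H\otimes H$ is $\gamma_{H\otimes H}=(\gamma_{H}\otimes\gamma_{H})\circ(H\otimes c_{H,H}\otimes H)\circ(\delta_{H}\otimes H\otimes H)$, because $H_{\sf{BT}}$ has coproduct $\delta_{H}$; hence the condition $\gamma_{H}\circ(H\otimes\mu_{H})=\mu_{H}\circ\gamma_{H\otimes H}$ is precisely (iii) of Definition \ref{BTdef}. At this point the statement follows immediately from the Corollary following Theorem \ref{th.interest}, applied with $X=H_{\sf{BT}}$ and $\varphi_{H}=\gamma_{H}$. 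Alternatively, one may argue directly: the last remaining condition $\gamma_{H}\circ(H\otimes\eta_{H})=\varepsilon_{H}\otimes\eta_{H}$ is \eqref{olde6}, which was itself obtained from Theorem \ref{th.interest} using (ii) and (iii) of Definition \ref{BTdef}, so together with (v), (iv) and (iii) all four module-algebra axioms are in place.

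There is no substantial obstacle here: the whole argument is a dictionary translation between the axioms of Definition \ref{BTdef} and the definition of a left module algebra, the only nontrivial identification being $\gamma_{H}\circ(\mu_{H}^{\scalebox{0.6}{\sf{BT}}}\otimes H)=\gamma_{H}\circ(H\otimes\gamma_{H})$, which is where axiom (iv) enters.
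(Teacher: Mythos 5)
Your proof is correct, but it takes a genuinely different route from the paper's. The paper's proof is a two-line corollary of what has already been established: since $\mathbb{H}_{\sf{BT}}=(H,H_{\sf{BT}})$ is a Hopf brace (Theorem \ref{Prop Brace TB}), the general theory of Hopf braces (the braided version of \cite[Lemma 1.8]{AGV}, recalled in the preliminaries) gives that $(H,\Gamma_{H}^{\scalebox{0.6}{\sf{BT}}})$ is a left $H_{\sf{BT}}$-module algebra, and the identification $\Gamma_{H}^{\scalebox{0.6}{\sf{BT}}}=\gamma_{H}$ from \eqref{Gbt=gamma} finishes the argument. You instead verify the four module-algebra axioms directly against Definition \ref{BTdef}: the unit of the action is (v), the associativity of the action is (iv) rewritten via the definition of $\mu_{H}^{\scalebox{0.6}{\sf{BT}}}$, compatibility with $\mu_{H}$ is (iii), and compatibility with $\eta_{H}$ is \eqref{olde6} (or, as you note, follows for free from the corollary to Theorem \ref{th.interest} once the module-coalgebra structure and the compatibility with $\mu_{H}$ are in hand). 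Both arguments are sound; your direct translation is more self-contained and makes transparent exactly which brace-triple axioms carry which part of the module-algebra structure, whereas the paper's route is shorter and reuses the already-imported Hopf-brace machinery. All identifications you make (same coalgebra $\delta_{H}$, same unit $\eta_{H}$ for $H_{\sf{BT}}$) are legitimate, so there is no gap.
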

\begin{proof}
Thanks to the fact that $\mathbb{H}_{\sf{BT}}=(H,H_{\sf{BT}})$ is a Hopf brace, we know that $(H,\Gamma_{H}^{\scalebox{0.6}{\sf{BT}}})$ is a left $H_{\sf{BT}}$-module algebra. Due to being $\Gamma_{H}^{\scalebox{0.6}{\sf{BT}}}=\gamma_{H}$, as we have proved in the previous result, we conclude that $(H,\gamma_{H})$ is a left $H_{\sf{BT}}$-module algebra.
\end{proof}
\begin{remark} Let's assume that $\sf{C}$ is symmetric. Under this assumption and thanks to the previous corollary, axiom (i) of Definition \ref{BTdef} means that $(H,\gamma_{H})$ is in the cocommutativity class of $H_{\sf{BT}}$.
\end{remark}
\begin{corollary}
Let $(H,\gamma_{H},T_{H})$ be a cocommutative brace triple, then 
\begin{equation}\label{TH2=ID}
T_{H}\circ T_{H}=id_{H}.
\end{equation} 
Therefore, conditions \emph{(vi.4)} and \emph{(vi.5)} of Definition \ref{BTdef} are equivalent in the cocommutative setting.
\end{corollary}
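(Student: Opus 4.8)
The plan is to obtain $T_{H}\circ T_{H}=id_{H}$ as a direct consequence of Theorem \ref{Prop Brace TB} together with the elementary fact, recalled in the Preliminaries, that a cocommutative Hopf algebra has an involutive antipode; the equivalence of \emph{(vi.4)} and \emph{(vi.5)} will then follow from a short precomposition argument.

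First I would apply Theorem \ref{Prop Brace TB} to the cocommutative brace triple $(H,\gamma_{H},T_{H})$: it produces the \texttt{s}-Hopf brace $\mathbb{H}_{\sf{BT}}=(H,H_{\sf{BT}})$ in which $H_{\sf{BT}}=(H,\eta_{H},\mu_{H}^{\scalebox{0.6}{\sf{BT}}},\varepsilon_{H},\delta_{H},T_{H})$ is a Hopf algebra whose antipode is precisely $T_{H}$. The underlying coalgebra of $H_{\sf{BT}}$ is $(H,\varepsilon_{H},\delta_{H})$, which is cocommutative by hypothesis, so $H_{\sf{BT}}$ is a cocommutative Hopf algebra. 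Hence, by the fact recalled after the definition of (co)commutative Hopf algebra that $\lambda_{X}\circ\lambda_{X}=id_{X}$ when $X$ is cocommutative, applied to $X=H_{\sf{BT}}$, we obtain $T_{H}\circ T_{H}=id_{H}$, which is \eqref{TH2=ID}.

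For the second assertion, I would use that, by Remark \ref{TB-cocTB}, in the cocommutative setting axiom \emph{(vi.1)} reads $\delta_{H}\circ T_{H}=(T_{H}\otimes T_{H})\circ\delta_{H}$ and $\lambda_{H}^{-1}=\lambda_{H}$ (the latter since $H=H_{1}$ is itself cocommutative). Assume \emph{(vi.4)}, i.e. $\gamma_{H}\circ(H\otimes T_{H})\circ\delta_{H}=\lambda_{H}$; precomposing both sides with $T_{H}$ and using the cocommutative form of \emph{(vi.1)} followed by \eqref{TH2=ID}, the left-hand side becomes
\[\gamma_{H}\circ(H\otimes T_{H})\circ(T_{H}\otimes T_{H})\circ\delta_{H}=\gamma_{H}\circ(T_{H}\otimes(T_{H}\circ T_{H}))\circ\delta_{H}=\gamma_{H}\circ(T_{H}\otimes H)\circ\delta_{H},\]
while the right-hand side is $\lambda_{H}\circ T_{H}=\lambda_{H}^{-1}\circ T_{H}$, so \emph{(vi.5)} holds. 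Precomposing the cocommutative form of \emph{(vi.5)} with $T_{H}$ and using the same two facts recovers \emph{(vi.4)} in the symmetric way, whence \emph{(vi.4)} and \emph{(vi.5)} are equivalent under cocommutativity.

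The computations involved are completely routine; the only point that deserves attention is the logical order, namely that \eqref{TH2=ID} is established first for the \emph{full} cocommutative brace triple structure, and it is this identity --- rather than \emph{(vi.4)} or \emph{(vi.5)} taken in isolation --- that drives the equivalence. If one wished to bypass Theorem \ref{Prop Brace TB}, an alternative would be to run directly, on the cocommutative bialgebra $H_{\sf{BT}}$, the standard convolution-algebra argument that a cocommutative bialgebra whose identity morphism is convolution invertible has an involutive antipode; but routing through Theorem \ref{Prop Brace TB} is the shortest path.
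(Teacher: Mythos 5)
Your proof is correct and follows essentially the same route as the paper: apply Theorem \ref{Prop Brace TB} to see that $T_{H}$ is the antipode of the cocommutative Hopf algebra $H_{\sf{BT}}$, then invoke the fact from the Preliminaries that cocommutative Hopf algebras have involutive antipodes. Your explicit precomposition argument for the equivalence of (vi.4) and (vi.5), and your remark that it is \eqref{TH2=ID} (established from the full structure) rather than either axiom in isolation that drives the equivalence, supply details the paper leaves implicit, but they do not change the approach.
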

\begin{proof}
As was proved in Theorem \ref{Prop Brace TB}, $T_{H}$ is the antipode for the Hopf algebra $H_{\sf{BT}}$. Then, if $H$ is cocommutative, $H_{\sf{BT}}$ is cocommutative too and, as a consequence, \eqref{TH2=ID} holds.
\end{proof}
\begin{corollary}\label{compat ant}
If $f\colon (H,\gamma_{H}, T_{H})\rightarrow (B,\gamma_{B},T_{B})$ is a morphism of brace triples in $\sf{C}$, then $$f\circ T_{H}=T_{B}\circ f.$$
\end{corollary}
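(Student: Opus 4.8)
The plan is to observe that, by Theorem \ref{Prop Brace TB}, $T_{H}$ and $T_{B}$ are precisely the antipodes of the Hopf algebras $H_{\sf{BT}}$ and $B_{\sf{BT}}$ associated to the brace triples $(H,\gamma_{H},T_{H})$ and $(B,\gamma_{B},T_{B})$. Hence, by the functoriality of the antipode recalled in \eqref{morant}, it suffices to prove that $f\colon H_{\sf{BT}}\to B_{\sf{BT}}$ is a morphism of Hopf algebras, i.e.\ an algebra-coalgebra morphism between the bialgebra structures $(H,\eta_{H},\mu_{H}^{\scalebox{0.6}{\sf{BT}}},\varepsilon_{H},\delta_{H})$ and $(B,\eta_{B},\mu_{B}^{\scalebox{0.6}{\sf{BT}}},\varepsilon_{B},\delta_{B})$.

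Now $f$ is by hypothesis a morphism of the original Hopf algebras $H$ and $B$, so it is already a coalgebra morphism for $(\varepsilon_{H},\delta_{H})$, $(\varepsilon_{B},\delta_{B})$ — which are unchanged in $H_{\sf{BT}}$, $B_{\sf{BT}}$ — and it satisfies $f\circ\eta_{H}=\eta_{B}$. Thus the only thing left to check is compatibility with the new products, namely $f\circ\mu_{H}^{\scalebox{0.6}{\sf{BT}}}=\mu_{B}^{\scalebox{0.6}{\sf{BT}}}\circ(f\otimes f)$. Starting from $\mu_{H}^{\scalebox{0.6}{\sf{BT}}}=\mu_{H}\circ(H\otimes\gamma_{H})\circ(\delta_{H}\otimes H)$, I would push $f$ through the composite: first use that $f$ is multiplicative for $\mu_{H}$, then use the defining identity \eqref{CondMorBT} of a morphism of brace triples, $f\circ\gamma_{H}=\gamma_{B}\circ(f\otimes f)$, to rewrite $f\circ\gamma_{H}$, and finally use that $f$ is comultiplicative, $(f\otimes f)\circ\delta_{H}=\delta_{B}\circ f$, to reassemble the right-hand side as $\mu_{B}\circ(B\otimes\gamma_{B})\circ(\delta_{B}\otimes B)\circ(f\otimes f)=\mu_{B}^{\scalebox{0.6}{\sf{BT}}}\circ(f\otimes f)$.

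This chain of equalities is entirely formal and presents no genuine obstacle; the only point requiring care is bookkeeping the tensor factors so that the three properties of $f$ (algebra morphism for $\mu$, coalgebra morphism, and \eqref{CondMorBT}) are applied in the correct order. Once $f\colon H_{\sf{BT}}\to B_{\sf{BT}}$ has been established as a Hopf algebra morphism, the desired equality $f\circ T_{H}=T_{B}\circ f$ is immediate from \eqref{morant}.
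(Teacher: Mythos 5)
Your proposal is correct and follows exactly the paper's own argument: reduce to showing $f\colon H_{\sf{BT}}\to B_{\sf{BT}}$ is a Hopf algebra morphism (only the compatibility with the new products needs checking, via multiplicativity of $f$, then \eqref{CondMorBT}, then comultiplicativity), and conclude by \eqref{morant}. No differences worth noting.
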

\begin{proof} It is enough to see that $f\colon H_{\sf{BT}}\rightarrow B_{\sf{BT}}$ is a Hopf algebra morphism. Due to the fact that $H_{\sf{BT}}$ and $B_{\sf{BT}}$ are Hopf algebras in $\sf{C}$ with the same underlying coalgebra structure and the same unit morphisms as $H$ and $B$, respectively, it is enough to prove that $f$ is compatible with the products $\mu_{H}^{\scalebox{0.6}{\sf{BT}}}$ and $\mu_{B}^{\scalebox{0.6}{\sf{BT}}}$. Indeed,
\begin{align*}
	&f\circ\mu_{H}^{\scalebox{0.6}{\sf{BT}}}\\=& f\circ\mu_{H}\circ(H\otimes\gamma_{H})\circ(\delta_{H}\otimes H)\;\footnotesize\textnormal{(by definition of $\mu_{H}^{\scalebox{0.6}{\sf{BT}}}$)}\\=& \mu_{B}\circ(f\otimes f)\circ(H\otimes\gamma_{H})\circ(\delta_{H}\otimes H)\;\footnotesize\textnormal{(by the condition of algebra morphism for $f\colon H\rightarrow B$)}\\=& \mu_{B}\circ(B\otimes\gamma_{B})\circ(((f\otimes f)\circ\delta_{H})\otimes f)\;\footnotesize\textnormal{(by \eqref{CondMorBT})}\\=& \mu_{B}\circ(B\otimes\gamma_{B})\circ(\delta_{B}\otimes B)\circ (f\otimes f)\;\footnotesize\textnormal{(by the condition of coalgebra morphism for $f$)}\\=& \mu_{B}^{\scalebox{0.6}{\sf{BT}}}\circ(f\otimes f)\;\footnotesize\textnormal{(by definition of $\mu_{B}^{\scalebox{0.6}{\sf{BT}}}$)}.
\end{align*}
So, due to being $f\colon H_{\sf{BT}}\rightarrow B_{\sf{BT}}$ a Hopf algebra morphism in $\sf{C}$, we can apply \eqref{morant} what concludes the proof.
\end{proof}
	
Theorem \ref{Prop Brace TB} implies that there exist a functor $F\colon \sf{BT}\longrightarrow\textnormal{\texttt{s}}\sf{HBr}$ defined by $F((H,\gamma_{H},T_{H}))=\mathbb{H}_{\sf{BT}}$ on objects and on morphisms by the identity. To see that $F$ is well-defined on morphisms, we have to prove that if $f$ is a morphism in $\sf{BT}$, then $f$ is a morphism in $\sf{HBr}$. To verify this fact, it is enough to compute that $f\circ \mu_{H}^{\scalebox{0.6}{\sf{BT}}}=\mu_{B}^{\scalebox{0.6}{\sf{BT}}}\circ(f\otimes f)$, what we have just seen in the proof of Corollary \ref{compat ant}. 

Moreover, we can also construct a brace triple from every \texttt{s}-Hopf brace. First of all, we are going to prove the following lemma.
\begin{lemma} Let $\mathbb{H}$ be a Hopf brace in $\sf{C}$. If $\Gamma_{H_{1}}$ satisfies condition \textnormal{(i)} of Definition \ref{sHbr}, then $\Gamma_{H_{1}}$ is a coalgebra morphism.
\end{lemma}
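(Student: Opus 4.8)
The plan is to check the two defining conditions of a coalgebra morphism for $\Gamma_{H_{1}}$, namely $\varepsilon_{H}\circ\Gamma_{H_{1}}=\varepsilon_{H}\otimes\varepsilon_{H}$ and $\delta_{H}\circ\Gamma_{H_{1}}=(\Gamma_{H_{1}}\otimes\Gamma_{H_{1}})\circ(H\otimes c_{H,H}\otimes H)\circ(\delta_{H}\otimes\delta_{H})$, separately. The counit identity holds for an arbitrary Hopf brace and needs no hypothesis: expanding $\varepsilon_{H}\circ\Gamma_{H_{1}}=\varepsilon_{H}\circ\mu_{H}^{1}\circ(\lambda_{H}^{1}\otimes\mu_{H}^{2})\circ(\delta_{H}\otimes H)$ and using that $\mu_{H}^{1}$ and $\mu_{H}^{2}$ are coalgebra morphisms, that $\varepsilon_{H}\circ\lambda_{H}^{1}=\varepsilon_{H}$ by \eqref{u-antip}, and the counit property of $\delta_{H}$, the left-hand side collapses to $\varepsilon_{H}\otimes\varepsilon_{H}$ in two or three lines.

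The comultiplicativity identity is where condition \textnormal{(i)} of Definition \ref{sHbr} enters. I would begin with $\delta_{H}\circ\Gamma_{H_{1}}$, substitute the definition of $\Gamma_{H_{1}}$, and push $\delta_{H}$ through the composite: first through $\mu_{H}^{1}$, using that it is a coalgebra morphism, and then through $\lambda_{H}^{1}$ and through $\mu_{H}^{2}$, using the anticomultiplicativity of the antipode in \eqref{a-antip} and again that $\mu_{H}^{2}$ is a coalgebra morphism. The resulting expression carries several iterated copies of $\delta_{H}$ and of the braiding $c_{H,H}$; with coassociativity of $\delta_{H}$ and naturality of $c$ (together with the braiding axioms) I would reorganize it so that the block $\mu_{H}^{1}\circ(\lambda_{H}^{1}\otimes\mu_{H}^{2})\circ(\delta_{H}\otimes H)=\Gamma_{H_{1}}$ reappears, thereby exposing a subdiagram of the form $(\Gamma_{H_{1}}\otimes H)\circ(H\otimes c_{H,H})\circ((c_{H,H}\circ\delta_{H})\otimes H)$ --- the factor $c_{H,H}\circ\delta_{H}$ being exactly the transposition produced by the anticomultiplicativity of $\lambda_{H}^{1}$. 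At that point condition \textnormal{(i)} of Definition \ref{sHbr} replaces $c_{H,H}\circ\delta_{H}$ by $\delta_{H}$ in that subdiagram, which is precisely the move that in the cocommutative case reduces to the trivial identity $c_{H,H}\circ\delta_{H}=\delta_{H}$ used in \cite[Lemma 2.2]{AGV}. A final round of coassociativity and naturality then folds the expression back into $(\Gamma_{H_{1}}\otimes\Gamma_{H_{1}})\circ(H\otimes c_{H,H}\otimes H)\circ(\delta_{H}\otimes\delta_{H})$, which finishes the argument.

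The main obstacle I anticipate is purely the combinatorial bookkeeping of the braidings: one has to keep track of which of the iterated coproducts of the first argument is fed to $\lambda_{H}^{1}$ and which to $\mu_{H}^{2}$, confirm that the copy transposed by the anticomultiplicativity of $\lambda_{H}^{1}$ is exactly the one controlled by condition \textnormal{(i)}, check that the condition is applied in the right direction, and verify that the remaining strand genuinely plays the role of the spectator tensorand in that condition. Everything else --- the passages through $\mu_{H}^{1}$, $\mu_{H}^{2}$ and $\lambda_{H}^{1}$, and the coassociativity and naturality rearrangements --- is routine given the properties of Hopf algebras and Hopf braces recalled in the Preliminaries.
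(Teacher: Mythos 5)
Your plan reproduces the paper's proof: the counit identity is handled exactly as you describe, and for comultiplicativity the paper likewise pushes $\delta_{H}$ through $\mu_{H}^{1}$, $\lambda_{H}^{1}$ and $\mu_{H}^{2}$, reorganizes by coassociativity and naturality of $c$ until the block $(\Gamma_{H_{1}}\otimes H)\circ(H\otimes c_{H,H})\circ((c_{H,H}\circ\delta_{H})\otimes H)$ appears (with the $c_{H,H}\circ\delta_{H}$ coming from the anticomultiplicativity of $\lambda_{H}^{1}$, as you predict), applies condition (i) to straighten it, and folds back. The approach and the key pivot point are the same; only the explicit braiding bookkeeping remains to be written out.
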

\begin{proof} On the one hand, it is straightforward to see that $\varepsilon_{H}\circ\Gamma_{H_{1}}=\varepsilon_{H}\otimes\varepsilon_{H}$. Let's see that $\delta_{H}\circ\Gamma_{H_{1}}=(\Gamma_{H_{1}}\otimes\Gamma_{H_{1}})\circ(H\otimes c_{H,H}\otimes H)\circ(\delta_{H}\otimes\delta_{H})$. Indeed:
\begin{align*}
&\delta_{H}\circ\Gamma_{H_{1}}\\=& \delta_{H}\circ\mu_{H}^{1}\circ(\lambda_{H}^{1}\otimes\mu_{H}^{2})\circ(\delta_{H}\otimes H)\;\footnotesize\textnormal{(by definition of $\Gamma_{H_{1}}$)}\\=& (\mu_{H}^{1}\otimes\mu_{H}^{1})\circ(H\otimes c_{H,H}\otimes H)\circ((\delta_{H}\circ\lambda_{H}^{1})\otimes(\delta_{H}\circ\mu_{H}^{2}))\circ(\delta_{H}\otimes H)\;\footnotesize\textnormal{(by the condition of coalgebra}\\&\footnotesize\textnormal{morphism for $\mu_{H}^{1}$)}\\=& (\mu_{H}^{1}\otimes\mu_{H}^{1})\circ(H\otimes c_{H,H}\otimes H)\circ((c_{H,H}\circ(\lambda_{H}^{1}\otimes\lambda_{H}^{1})\circ\delta_{H})\otimes((\mu_{H}^{2}\otimes\mu_{H}^{2})\circ(H\otimes c_{H,H}\otimes H)\\&\circ(\delta_{H}\otimes\delta_{H})))\circ(\delta_{H}\otimes H)\;\footnotesize\textnormal{(by the condition of coalgebra morphism for $\mu_{H}^{2}$ and \eqref{a-antip})}\\=& (H\otimes (\mu_{H}^{1}\circ(\lambda_{H}^{1}\otimes H)))\circ(((\Gamma_{H_{1}}\otimes H)\circ(H\otimes c_{H,H})\circ((c_{H,H}\circ\delta_{H})\otimes H))\otimes\mu_{H}^{2})\\&\circ(H\otimes c_{H,H}\otimes H)\circ(\delta_{H}\otimes\delta_{H})\;\footnotesize\textnormal{(by coassociativity of $\delta_{H}$, naturality of $c$ and definition of $\Gamma_{H_{1}}$)}\\=& (H\otimes (\mu_{H}^{1}\circ(\lambda_{H}^{1}\otimes H)))\circ(((\Gamma_{H_{1}}\otimes H)\circ(H\otimes c_{H,H})\circ(\delta_{H}\otimes H))\otimes\mu_{H}^{2})\circ(H\otimes c_{H,H}\otimes H)\\&\circ(\delta_{H}\otimes\delta_{H})\;\footnotesize\textnormal{(by (i) of Definition \ref{sHbr})}\\=& (\Gamma_{H_{1}}\otimes\Gamma_{H_{1}})\circ(H\otimes c_{H,H}\otimes H)\circ(\delta_{H}\otimes\delta_{H})\;\footnotesize\textnormal{(by coassociativity of $\delta_{H}$ and naturality of $c$)}.\qedhere
\end{align*}

\end{proof}
\begin{theorem}\label{sHB to BT} If $\mathbb{H}$ is an object in \textnormal{\texttt{s}}$\sf{HBr}$, then $(H_{1},\Gamma_{H_{1}},\lambda_{H}^{2})$ is a brace triple.
\end{theorem}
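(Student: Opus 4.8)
The plan is to check, one at a time, the six axioms of Definition~\ref{BTdef} for the triple $(H_{1},\Gamma_{H_{1}},\lambda_{H}^{2})$; most of them turn out to be direct translations of facts already recorded in the Preliminaries or of the defining conditions of an \texttt{s}-Hopf brace, so no long computation should be needed. To begin with, $H_{1}$ is a Hopf algebra in $\sf{C}$ by Definition~\ref{H-brace}(i), and both $\lambda_{H}^{1}$ and $\lambda_{H}^{2}$ are isomorphisms by condition (ii) of Definition~\ref{sHbr}; hence the standing hypotheses of Definition~\ref{BTdef} on the underlying Hopf algebra and on $T_{H}=\lambda_{H}^{2}$ are met. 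Axiom (i) of Definition~\ref{BTdef} with $\gamma_{H}=\Gamma_{H_{1}}$ is verbatim condition (i) of Definition~\ref{sHbr}, and the previous Lemma then yields that $\Gamma_{H_{1}}$ is a coalgebra morphism, which is exactly axiom (ii).

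For axioms (iii), (iv) and (v) I would use the fact, recalled right after Definition~\ref{H-brace}, that $(H,\eta_{H}^{1},\mu_{H}^{1})$ is a left $H_{2}$-module algebra with action $\Gamma_{H_{1}}$, together with the identity~\eqref{eb2} and the equality $\eta_{H}^{1}=\eta_{H}^{2}$. The second equality in \eqref{mod-alg} applied to $\mu_{H}^{1}$ is precisely axiom (iii); the module-unit axiom $\Gamma_{H_{1}}\circ(\eta_{H}^{2}\otimes H)=id_{H}$ gives axiom (v); and the module-associativity axiom $\Gamma_{H_{1}}\circ(H\otimes\Gamma_{H_{1}})=\Gamma_{H_{1}}\circ(\mu_{H}^{2}\otimes H)$, after rewriting $\mu_{H}^{2}$ by means of \eqref{eb2} as $\mu_{H}^{1}\circ(H\otimes\Gamma_{H_{1}})\circ(\delta_{H}\otimes H)$, becomes axiom (iv).

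Finally, for axiom (vi) with $T_{H}=\lambda_{H}^{2}$: (vi.1) and (vi.2) are the anticomultiplicativity and the counit invariance of the antipode $\lambda_{H}^{2}$ of $H_{2}$, namely \eqref{a-antip} and \eqref{u-antip}; (vi.3) is exactly condition (ii.1) of Definition~\ref{sHbr}; (vi.4) is exactly the identity \eqref{GammaL2}; and (vi.5) is exactly condition (ii.2) of Definition~\ref{sHbr}. Assembling these facts proves that $(H_{1},\Gamma_{H_{1}},\lambda_{H}^{2})$ is a brace triple. I do not expect a genuine obstacle here; the only spot that requires a small manipulation rather than a direct citation is axiom (iv), where one must first recognise $\mu_{H}^{1}\circ(H\otimes\Gamma_{H_{1}})\circ(\delta_{H}\otimes H)$ as $\mu_{H}^{2}$ through \eqref{eb2} before invoking the left $H_{2}$-module structure of $(H,\Gamma_{H_{1}})$.
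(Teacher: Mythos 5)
Your proposal is correct and follows essentially the same route as the paper's proof: axiom (i) of Definition~\ref{BTdef} is condition (i) of Definition~\ref{sHbr}, axiom (ii) comes from the preceding lemma, axioms (iii)--(v) from the left $H_{2}$-module algebra structure of $(H,\Gamma_{H_{1}})$ together with \eqref{eb2}, and axioms (vi.1)--(vi.5) from \eqref{a-antip}, \eqref{u-antip}, (ii.1) of Definition~\ref{sHbr}, \eqref{GammaL2} and (ii.2) of Definition~\ref{sHbr}, respectively. Your write-up merely makes explicit the module-algebra bookkeeping that the paper leaves implicit.
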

\begin{proof}It is a consequence of the following facts: By (i) of Definition \ref{sHbr} and previous lemma, conditions (i) and (ii) of Definition \ref{BTdef} hold. Moreover, it is well known that $(H_{1},\Gamma_{H_{1}})$ is a left $H_{2}$-module algebra. This property together with \eqref{eb2} implies that axioms (iii), (iv) and (v) of Definition \ref{BTdef} also hold. Identities (vi), (vi.1) and (vi.2) of Definition \ref{BTdef} follow by (ii) of Definition \ref{sHbr} and equations \eqref{a-antip} and \eqref{u-antip}. The remaining axioms, (vi.3), (vi.4) and (vi.5) of Definition \ref{BTdef}, are consequence of (ii.1) of Definition \ref{sHbr}, equation \eqref{GammaL2} and (ii.2) of Definition \ref{sHbr}, respectively.
\end{proof}
As a consequence of the previous theorem, we obtain a functor $G\colon \textnormal{\texttt{s}}\sf{HBr}\longrightarrow\sf{BT}$ acting on objects by $G(\mathbb{H})=(H_{1},\Gamma_{H_{1}},\lambda_{H}^{2})$ and on morphisms by the identity. To see that $G_{1}$ is well-defined on morphisms, we have to compute that if $f\colon\mathbb{H}\rightarrow\mathbb{B}$ is a morphism in $\textnormal{\texttt{s}}\sf{HBr}$, then $f\circ\Gamma_{H_{1}}=\Gamma_{B_{1}}\circ(f\otimes f)$. Indeed:
\begin{align*}
	&f\circ\Gamma_{H_{1}}\\=& f\circ\mu_{H}^{1}\circ(\lambda_{H}^{1}\otimes \mu_{H}^{2})\circ(\delta_{H}\otimes H)\;\footnotesize\textnormal{(by definition of $\Gamma_{H_{1}}$)}\\=&  \mu_{B}^{1}\circ((f\circ\lambda_{H}^{1})\otimes(f\circ\mu_{H}^{2}))\circ(\delta_{H}\otimes H)\;\footnotesize\textnormal{(by the condition of algebra morphism for $f\colon H_{1}\rightarrow B_{1}$)}\\=& \mu_{B}^{1}\circ(\lambda_{B}^{1}\otimes\mu_{B}^{2})\circ(((f\otimes f)\circ\delta_{H})\otimes f)\;\footnotesize\textnormal{(by the condition of algebra morphism for $f\colon H_{2}\rightarrow B_{2}$ and \eqref{morant})}\\=&  \mu_{B}^{1}\circ(\lambda_{B}^{1}\otimes\mu_{B}^{2})\circ(\delta_{B}\otimes B)\circ(f\otimes f)\;\footnotesize\textnormal{(by the condition of coalgebra morphism for $f$)}\\=& \Gamma_{B_{1}}\circ (f\otimes f)\;\footnotesize\textnormal{(by definition of $\Gamma_{B_{1}}$)}.
\end{align*}

Next theorem is the main result of this section. We will prove that functors $F$ and $G$ induce a categorical isomorphism between $\textnormal{\texttt{s}}\sf{HBr}$ and $\sf{BT}$.
\begin{theorem}\label{iso1}
The categories $\textnormal{\texttt{s}}\sf{HBr}$ and $\sf{BT}$ are isomorphic.
\end{theorem}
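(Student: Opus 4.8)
The plan is to prove the categorical isomorphism by showing directly that the composites $G\circ F$ and $F\circ G$ are the identity functors on $\sf{BT}$ and $\texttt{s}\sf{HBr}$, respectively. Since both $F$ and $G$ act on morphisms by the identity on the underlying morphism of $\sf{C}$ —and this assignment is well defined in both directions, as was checked in the computation inside the proof of Corollary \ref{compat ant} for $F$ and in the computation immediately preceding this theorem for $G$— the composites $G\circ F$ and $F\circ G$ also act on morphisms by the identity. Hence the whole problem reduces to verifying that $G\circ F$ and $F\circ G$ fix objects.

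For $G\circ F$, let $(H,\gamma_{H},T_{H})$ be a brace triple. By Theorem \ref{Prop Brace TB} we have $F((H,\gamma_{H},T_{H}))=\mathbb{H}_{\sf{BT}}=(H,H_{\sf{BT}})$, whose first Hopf algebra is exactly the original $H$ and whose second Hopf algebra $H_{\sf{BT}}=(H,\eta_{H},\mu_{H}^{\scalebox{0.6}{\sf{BT}}},\varepsilon_{H},\delta_{H},T_{H})$ has antipode $T_{H}$. Applying $G$ we obtain the brace triple $(H,\Gamma_{H}^{\scalebox{0.6}{\sf{BT}}},T_{H})$, and by the identity \eqref{Gbt=gamma} established in that same proof, $\Gamma_{H}^{\scalebox{0.6}{\sf{BT}}}=\gamma_{H}$. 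Therefore $G(F((H,\gamma_{H},T_{H})))=(H,\gamma_{H},T_{H})$.

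For $F\circ G$, let $\mathbb{H}=(H_{1},H_{2})$ be an object of $\texttt{s}\sf{HBr}$. By Theorem \ref{sHB to BT}, $G(\mathbb{H})=(H_{1},\Gamma_{H_{1}},\lambda_{H}^{2})$, a brace triple whose underlying Hopf algebra is $H_{1}$ with antipode $\lambda_{H}^{1}$. Applying $F$ produces the \texttt{s}-Hopf brace whose first Hopf algebra is again $H_{1}$ and whose second Hopf algebra is $(H,\eta_{H},\mu_{H}^{1}\circ(H\otimes\Gamma_{H_{1}})\circ(\delta_{H}\otimes H),\varepsilon_{H},\delta_{H},\lambda_{H}^{2})$. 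By \eqref{eb2} the product $\mu_{H}^{1}\circ(H\otimes\Gamma_{H_{1}})\circ(\delta_{H}\otimes H)$ equals $\mu_{H}^{2}$, so this Hopf algebra is precisely $H_{2}$, and hence $F(G(\mathbb{H}))=(H_{1},H_{2})=\mathbb{H}$.

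Combining the two computations, $G\circ F=\mathrm{id}_{\sf{BT}}$ and $F\circ G=\mathrm{id}_{\texttt{s}\sf{HBr}}$, so $F$ and $G$ are mutually inverse isomorphisms of categories. There is no serious obstacle in this argument: the substantive ingredients, namely $\Gamma_{H}^{\scalebox{0.6}{\sf{BT}}}=\gamma_{H}$ and $\mu_{H}^{2}=\mu_{H}^{1}\circ(H\otimes\Gamma_{H_{1}})\circ(\delta_{H}\otimes H)$, are already available from \eqref{Gbt=gamma} and \eqref{eb2}, and the only point that genuinely requires attention is that the two notions of morphism correspond under these object assignments, which is exactly what the well-definedness verifications for $F$ and $G$ on morphisms provide; everything else is a direct check against the definitions.
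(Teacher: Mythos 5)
Your proposal is correct and follows essentially the same route as the paper: both directions reduce to the identities $\Gamma_{H}^{\scalebox{0.6}{\sf{BT}}}=\gamma_{H}$ from \eqref{Gbt=gamma} and $\mu_{H}^{\scalebox{0.6}{\sf{BT}}}=\mu_{H}^{2}$ from \eqref{eb2}, with the morphism-level check already handled by the well-definedness verifications for $F$ and $G$. No gaps.
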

\begin{proof} 
First of all, it results clear that $G\circ F=\sf{id}_{\sf{BT}}$. Indeed, consider $(H,\gamma_{H},T_{H})$ a brace triple, we obtain that:
\begin{align*}
&(G\circ F)((H,\gamma_{H},T_{H}))\\=&G(\mathbb{H}_{\sf{BT}})\;\footnotesize\textnormal{(by definition of functor $F$)}\\=&(H,\Gamma_{H}^{\scalebox{0.6}{\sf{BT}}},T_{H})\;\footnotesize\textnormal{(by definition of functor $G$)}\\=&(H,\gamma_{H},T_{H})\;\footnotesize\textnormal{(by \eqref{Gbt=gamma})}.
\end{align*}

On the other side, consider $\mathbb{H}$ an object in $\textnormal{\texttt{s}}\sf{HBr}$. We have that:
\begin{align*}
&(F\circ G)(\mathbb{H})\\=&F((H_{1},\Gamma_{H_{1}},\lambda_{H}^{2}))\;\footnotesize\textnormal{(by definition of functor $G$)}\\=&(H_{1},H_{\sf{BT}})\;\footnotesize\textnormal{(by definition of functor $F$)},
\end{align*}
where, in this particular case,
\begin{align*}
&\mu_{H}^{\scalebox{0.6}{\sf{BT}}}\\=&\mu_{H}^{1}\circ(H\otimes\Gamma_{H_{1}})\circ(\delta_{H}\otimes H)\;\footnotesize\textnormal{(by definition of $\mu_{H}^{\scalebox{0.6}{\sf{BT}}}$)}\\=&\mu_{H}^{2}\;\footnotesize\textnormal{(by \eqref{eb2})}.
\end{align*}
Therefore, $H_{\sf{BT}}=H_{2}$, and then $F\circ G=\sf{id}_{\textnormal{\texttt{s}}\sf{HBr}}$.
\end{proof}

\begin{corollary}\label{Cor iso cocHB cocTB}
Categories $\sf{cocHBr}$ and $\sf{cocBT}$ are isomorphic.
\end{corollary}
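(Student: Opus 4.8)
The plan is to obtain this corollary by restricting the categorical isomorphism of Theorem \ref{iso1} to the cocommutative full subcategories, without any new computation. First I would recall two facts already available: by Remark \ref{sHB-cocHB}, every cocommutative Hopf brace is a \texttt{s}-Hopf brace, so $\sf{cocHBr}$ is a full subcategory of $\texttt{s}\sf{HBr}$; and by Remark \ref{TB-cocTB}, $\sf{cocBT}$ is a full subcategory of $\sf{BT}$.

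The key step is then to verify that the functors $F$ and $G$ of Theorem \ref{iso1} restrict to these subcategories, and for this the crucial observation is that neither functor modifies the underlying coalgebra. Indeed, on objects $F((H,\gamma_{H},T_{H}))=\mathbb{H}_{\sf{BT}}=(H,H_{\sf{BT}})$, where both $H$ and $H_{\sf{BT}}$ carry the coalgebra $(H,\varepsilon_{H},\delta_{H})$, while $G(\mathbb{H})=(H_{1},\Gamma_{H_{1}},\lambda_{H}^{2})$ has underlying Hopf algebra $H_{1}$, again with coalgebra $(H,\varepsilon_{H},\delta_{H})$. Hence, if $(H,\gamma_{H},T_{H})$ is a cocommutative brace triple, then $\delta_{H}=c_{H,H}\circ\delta_{H}$, so both Hopf algebras of $\mathbb{H}_{\sf{BT}}$ are cocommutative and $F$ sends $\sf{cocBT}$ into $\sf{cocHBr}$; conversely, if $\mathbb{H}$ is a cocommutative Hopf brace, then $H_{1}$ is a cocommutative Hopf algebra, so $(H_{1},\Gamma_{H_{1}},\lambda_{H}^{2})$ lies in $\sf{cocBT}$ and $G$ sends $\sf{cocHBr}$ into $\sf{cocBT}$. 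Since $F$ and $G$ act by the identity on morphisms and both subcategories are full, these assignments define functors $F'\colon\sf{cocBT}\to\sf{cocHBr}$ and $G'\colon\sf{cocHBr}\to\sf{cocBT}$.

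To finish I would simply restrict the identities $G\circ F=\sf{id}_{\sf{BT}}$ and $F\circ G=\sf{id}_{\texttt{s}\sf{HBr}}$ proved in Theorem \ref{iso1}, obtaining $G'\circ F'=\sf{id}_{\sf{cocBT}}$ and $F'\circ G'=\sf{id}_{\sf{cocHBr}}$, which is exactly the claimed isomorphism. I do not expect any real obstacle: the entire argument rests on the fact that passing between a brace triple and its associated \texttt{s}-Hopf brace leaves $\varepsilon_{H}$ and $\delta_{H}$ untouched, together with the identifications of Remarks \ref{TB-cocTB} and \ref{sHB-cocHB}. The only point worth stating explicitly is that $\mathbb{H}_{\sf{BT}}$ is cocommutative \emph{as a Hopf brace}, i.e. that its two Hopf algebras $H$ and $H_{\sf{BT}}$ are both cocommutative; but this is immediate because they share the coalgebra $(H,\varepsilon_{H},\delta_{H})$.
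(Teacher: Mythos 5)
Your proposal is correct and follows exactly the paper's argument: the paper also deduces the corollary from Theorem \ref{iso1} together with Remarks \ref{TB-cocTB} and \ref{sHB-cocHB}, taking $F'$ and $G'$ to be the restrictions of $F$ and $G$. Your extra observation that both functors leave the underlying coalgebra $(H,\varepsilon_{H},\delta_{H})$ untouched is precisely the (implicit) reason the restrictions are well defined, so nothing is missing.
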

\begin{proof}
It is enough to take into account Remarks \ref{TB-cocTB} and \ref{sHB-cocHB} and the previous theorem. The isomorphism in this case is given by functors $F'$ and $G'$ which are the restrictions of $F$ and $G$ to $\sf{cocBT}$ and $\sf{cocHBr}$, respectively. 
\vspace{-0.15cm}\[\xymatrix{
&\sf{BT}\ar@<1ex>[rr]^-{F} &\simeq &\sf{HBr}\ar@<1ex>[ll]^-{G}\\
&\sf{cocBT}\ar@<1ex>[rr]^-{F'}\ar@{^(->}[u] &\simeq &\sf{cocHBr}\ar@<1ex>[ll]^-{G'}\ar@{^(->}[u]
}\]
\end{proof}
\section{Post-Hopf algebras and Hopf braces}\label{sec3}
In this section we introduce the notion of post-Hopf algebra in the braided monoidal context. In particular, for the category of vector spaces over a field $\mathbb{K}$, we obtain the concept of post-Hopf algebra presented in \cite{LST}, where the authors get an equivalence between Hopf braces and these objects under cocommutativity assumption. Besides being working in a more general setting, in this section we prove that the categories of finite cocommutative Hopf braces and cocommutative post-Hopf algebras satisfying condition \eqref{property for betatilde2} are isomoprhic. As a consequence of this result together with Corollary \ref{Cor iso cocHB cocTB}, we also deduce that finite cocommutative brace triples are isomorphic to post-Hopf algebras verifying \eqref{property for betatilde2}.

\begin{definition}\label{PHopf algebra}
A post-Hopf algebra in $\sf{C}$ is a pair $(H,m_{H})$ where $H$ is a finite Hopf algebra in $\sf{C}$ and $m_{H}\colon H\otimes H\rightarrow H$ is a morphism in $\sf{C}$ that satisfies the following conditions:
\begin{itemize}
	\item[(i)] $m_{H}$ is a coalgebra morphism, which means that the following equalities are satisfied:
	\begin{itemize}
	\item[(i.1)] $\delta_{H}\circ m_{H}=(m_{H}\otimes m_{H})\circ(H\otimes c_{H,H}\otimes H)\circ(\delta_{H}\otimes\delta_{H})$,
	\item[(i.2)] $\varepsilon_{H}\circ m_{H}=\varepsilon_{H}\otimes\varepsilon_{H}.$
	\end{itemize}
	\item[(ii)] $m_{H}\circ(H\otimes m_{H})=m_{H}\circ((\mu_{H}\circ(H\otimes m_{H})\circ(\delta_{H}\otimes H))\otimes H)$, which is called the ``weighted'' associativity.
	\item[(iii)] $m_{H}\circ(H\otimes\mu_{H})=\mu_{H}\circ(m_{H}\otimes m_{H})\circ(H\otimes c_{H,H}\otimes H)\circ(\delta_{H}\otimes H\otimes H)$.
	\item[(iv)] The morphism 
	\[\alpha_{H}\coloneqq (H^{\ast}\otimes m_{H})\circ(c_{H,H^{\ast}}\otimes H)\circ(H\otimes   a_{H}(K))\colon H\rightarrow H^{\ast}\otimes H\]
	is convolution invertible in $\Hom(H,H^{\ast}\otimes H)$, which means that there exists $\beta_{H}\colon H\rightarrow H^{\ast}\otimes H$ such that
	\[(H^{\ast}\otimes  b_{H}(K)\otimes H)\circ(\alpha_{H}\otimes\beta_{H})\circ\delta_{H}=\varepsilon_{H}\otimes  a_{H}(K)=(H^{\ast}\otimes  b_{H}(K)\otimes H)\circ (\beta_{H}\otimes\alpha_{H})\circ\delta_{H}.\]
	\end{itemize}
\end{definition}
\begin{remark}
Given a post-Hopf algebra $(H,m_{H})$, conditions (i) and (iii) of Definition \ref{PHopf algebra} imply that 
\begin{equation}\label{oldeq3}
m_{H}\circ(H\otimes\eta_{H})=\varepsilon_{H}\otimes \eta_{H}
\end{equation}
holds by Theorem \ref{th.interest}.
\end{remark}
\begin{definition}
Let $(H,m_{H})$ and $(B,m_{B})$ be post-Hopf algebras in $\sf{C}$ and let $f\colon H\rightarrow B$ be a morphism in $\sf{C}$. We will say that $f$ is a post-Hopf algebra morphism if $f$ is a Hopf algebra morphism and the condition
\begin{equation}\label{CondMorPHopf}
f\circ m_{H}=m_{B}\circ(f\otimes f)
\end{equation}
holds.
\end{definition}
Post-Hopf algebras and their morphisms form a category and we will denote it by $\sf{Post}$-${\sf Hopf}$. When $H$ is cocommutative, we will say that $(H,m_{H})$ is a cocommutative post-Hopf algebra in $\sf{C}$. Cocommutative post-Hopf algebras constitute a full subcategory of $\sf{Post}$-${\sf Hopf}$ which we will denote by $\sf{cocPost}$-$\sf{Hopf}$.
\begin{lemma}\label{Propiedades1PHA}
	Let $(H,m_{H})$ be an object in $\sf{Post}$-${\sf Hopf}$. It is verified that 
\begin{equation}\label{eq1}
	m_{H}\circ c_{H,H}^{-1}=( b_{H}(K)\otimes H)\circ(H\otimes \alpha_{H}).
\end{equation}
Therefore, 
\begin{equation}\label{eq2}
	m_{H}=( b_{H}(K)\otimes H)\circ(H\otimes \alpha_{H})\circ c_{H,H}.
\end{equation}
\end{lemma}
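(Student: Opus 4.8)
The strategy is to establish \eqref{eq1} by transforming its right-hand side, and then to deduce \eqref{eq2} by precomposing with $c_{H,H}$. First I would unfold the right-hand side of \eqref{eq1} using the definition $\alpha_{H}=(H^{\ast}\otimes m_{H})\circ(c_{H,H^{\ast}}\otimes H)\circ(H\otimes a_{H}(K))$, obtaining
\[(b_{H}(K)\otimes H)\circ(H\otimes\alpha_{H})=(b_{H}(K)\otimes H)\circ(H\otimes H^{\ast}\otimes m_{H})\circ\big(H\otimes\big((c_{H,H^{\ast}}\otimes H)\circ(H\otimes a_{H}(K))\big)\big).\]

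The crucial move is to recognize the factor $(c_{H,H^{\ast}}\otimes H)\circ(H\otimes a_{H}(K))$ occurring inside the outer $H\otimes(-)$ and to rewrite it, via \eqref{finitebraid}, as $(H^{\ast}\otimes c_{H,H}^{-1})\circ(a_{H}(K)\otimes H)$. Using functoriality of $\otimes$ (the interchange law) to reorganize the composite, the right-hand side of \eqref{eq1} becomes
\[(b_{H}(K)\otimes H)\circ\big(H\otimes H^{\ast}\otimes(m_{H}\circ c_{H,H}^{-1})\big)\circ(H\otimes a_{H}(K)\otimes H),\]
and once more by the interchange law this equals $(m_{H}\circ c_{H,H}^{-1})\circ\big(\big((b_{H}(K)\otimes H)\circ(H\otimes a_{H}(K))\big)\otimes H\big)$. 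Now the triangle identity for the $\sf{C}$-adjunction $H\otimes-\dashv H^{\ast}\otimes-$ (the ``snake'' relation between the unit $a_{H}$ and the counit $b_{H}$, evaluated at $K$) gives $(b_{H}(K)\otimes H)\circ(H\otimes a_{H}(K))=id_{H}$, so the whole expression collapses to $m_{H}\circ c_{H,H}^{-1}$, which proves \eqref{eq1}. Finally, precomposing \eqref{eq1} with $c_{H,H}$ and using $c_{H,H}^{-1}\circ c_{H,H}=id_{H\otimes H}$ yields \eqref{eq2}.

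The argument is mostly diagram bookkeeping; the one point that must be handled in the right order is the use of \eqref{finitebraid} \emph{before} invoking the snake identity, since it is precisely the braiding rewrite that moves the coevaluation $a_{H}(K)$ into the position in which it cancels against the evaluation $b_{H}(K)$. All the remaining manipulations are instances of the interchange law for $\otimes$ together with the defining properties of $a_{H}$ and $b_{H}$, so I do not expect any genuine obstacle.
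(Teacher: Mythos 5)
Your proposal is correct and follows essentially the same route as the paper: unfold $\alpha_{H}$, rewrite $(c_{H,H^{\ast}}\otimes H)\circ(H\otimes a_{H}(K))$ via \eqref{finitebraid}, cancel $a_{H}(K)$ against $b_{H}(K)$ by the adjunction (snake) identity, and then precompose with $c_{H,H}$ to get \eqref{eq2}. Your remark about applying \eqref{finitebraid} before the snake identity is exactly the order used in the paper's computation.
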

\begin{proof}
Let's start proving \eqref{eq1}:
	\begin{align*}
		&( b_{H}(K)\otimes H)\circ(H\otimes\alpha_{H})\\=& ( b_{H}(K)\otimes m_{H})\circ(H\otimes ((c_{H,H^{\ast}}\otimes H)\circ( H\otimes  a_{H}(K))))\;\footnotesize\textnormal{(by definition of $\alpha_{H}$)}\\=&(b_{H}(K)\otimes (m_{H}\circ c_{H,H}^{-1}))\circ(H\otimes a_{H}(K)\otimes H)\;\footnotesize\textnormal{(by \eqref{finitebraid})}\\=&m_{H}\circ c_{H,H}^{-1}\;\footnotesize\textnormal{(by the adjunction properties)}.
	\end{align*}
So, composing on the right with $c_{H,H}$, we obtain \eqref{eq2}.
\end{proof}
\begin{lemma}\label{Propiedades2PHA}
	Let $(H, m_{H})$ be an object in $\sf{Post}$-${\sf Hopf}$, then
		\begin{equation}\label{eq4}
			m_{H}\circ(\eta_{H}\otimes H)=id_{H}.
		\end{equation}
\end{lemma}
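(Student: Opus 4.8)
The strategy is to show that the morphism $e:=m_{H}\circ(\eta_{H}\otimes H)\colon H\rightarrow H$ is invertible for the composition in $\Hom(H,H)$ and satisfies $e\circ e=e$; from these two facts $e=id_{H}$ follows at once by composing $e\circ e=e$ with $e^{-1}$, which is exactly \eqref{eq4}.

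\emph{Invertibility of $e$.} First I would rewrite $e$ in terms of $\alpha_{H}$. By \eqref{eq2}, the naturality of $c$ and $c_{K,H}=id_{H}$ (so that $c_{H,H}\circ(\eta_{H}\otimes H)=H\otimes\eta_{H}$), one gets
\[e=(b_{H}(K)\otimes H)\circ(H\otimes(\alpha_{H}\circ\eta_{H})).\]
Hence $e$ is the image of $\alpha_{H}\circ\eta_{H}\colon K\rightarrow H^{\ast}\otimes H$ under the bijection $\Hom(K,H^{\ast}\otimes H)\rightarrow\Hom(H,H)$, $\phi\mapsto(b_{H}(K)\otimes H)\circ(H\otimes\phi)$, which by the zig-zag identities of the $\sf C$-adjunction sends $\eta_{H^{\ast}\otimes H}=a_{H}(K)$ to $id_{H}$ and converts the product $\mu_{H^{\ast}\otimes H}$ into composition of morphisms in $\Hom(H,H)$, so it preserves invertible elements. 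On the other hand, condition (iv) of Definition \ref{PHopf algebra} says precisely that $\alpha_{H}$ is invertible in the convolution algebra $\mathcal{H}(H,H^{\ast}\otimes H)$; composing its defining identities with $\eta_{H}$ and using $\delta_{H}\circ\eta_{H}=\eta_{H}\otimes\eta_{H}$ and $\varepsilon_{H}\circ\eta_{H}=id_{K}$ yields
\[(H^{\ast}\otimes b_{H}(K)\otimes H)\circ((\alpha_{H}\circ\eta_{H})\otimes(\beta_{H}\circ\eta_{H}))=a_{H}(K)=(H^{\ast}\otimes b_{H}(K)\otimes H)\circ((\beta_{H}\circ\eta_{H})\otimes(\alpha_{H}\circ\eta_{H})),\]
that is, $\alpha_{H}\circ\eta_{H}$ is invertible in the algebra $H^{\ast}\otimes H$ with inverse $\beta_{H}\circ\eta_{H}$. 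Applying the bijection above, $e$ is invertible in $\Hom(H,H)$ with inverse $(b_{H}(K)\otimes H)\circ(H\otimes(\beta_{H}\circ\eta_{H}))$. (This can also be checked directly from the last displayed identity using \eqref{finitebraid} and the naturality of $c$.) I expect this passage, which is where finiteness of $H$ is genuinely used, to be the main obstacle.

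\emph{Idempotency of $e$.} Now I would precompose the ``weighted'' associativity (ii) of Definition \ref{PHopf algebra} with $H\otimes\eta_{H}\otimes H$. On the left-hand side $(H\otimes m_{H})\circ(H\otimes\eta_{H}\otimes H)=H\otimes e$, so it becomes $m_{H}\circ(H\otimes e)$. On the right-hand side, \eqref{oldeq3} and the counit property give $(H\otimes m_{H})\circ(\delta_{H}\otimes\eta_{H})=H\otimes\eta_{H}$, hence $\mu_{H}\circ(H\otimes m_{H})\circ(\delta_{H}\otimes H)\circ(H\otimes\eta_{H})=id_{H}$ and the right-hand side collapses to $m_{H}$. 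Thus $m_{H}\circ(H\otimes e)=m_{H}$, and precomposing this identity with $\eta_{H}\otimes H$ gives $e\circ e=m_{H}\circ(\eta_{H}\otimes e)=m_{H}\circ(\eta_{H}\otimes H)=e$.

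\emph{Conclusion.} Since $e$ is invertible and $e\circ e=e$, composing with $e^{-1}$ yields $e=id_{H}$, i.e. $m_{H}\circ(\eta_{H}\otimes H)=id_{H}$.
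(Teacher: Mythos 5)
Your proof is correct and follows essentially the same route as the paper's: both establish idempotency of $e=m_{H}\circ(\eta_{H}\otimes H)$ from the weighted associativity together with \eqref{oldeq3}, and both cancel that idempotent using the convolution inverse $\beta_{H}$ restricted along $\eta_{H}$ --- the paper carries the cancellation out explicitly inside the algebra $H^{\ast}\otimes H$ (showing $\alpha_{H}\circ\eta_{H}=a_{H}(K)$), while you transport everything to $(\Hom(H,H),\circ)$ and invoke ``invertible idempotent equals identity''. The only nitpick is that your transfer map $\Phi(\phi)=(b_{H}(K)\otimes H)\circ(H\otimes\phi)$ actually reverses the order of multiplication, $\Phi(\phi\cdot\psi)=\Phi(\psi)\circ\Phi(\phi)$, which is harmless here since you only need it to preserve the unit and invertibility.
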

\begin{proof}
First of all, note that the morphism $m_{H}\circ(\eta_{H}\otimes H)$ is idempotent. Indeed,
\begin{align*}
	 &m_{H} \circ(H\otimes m_{H} )\circ(\eta_{H}\otimes\eta_{H}\otimes H)\\\nonumber =&  m_{H} \circ((\mu_{H}\circ(H\otimes m_{H} )\circ(\delta_{H}\otimes H)\circ(\eta_{H}\otimes\eta_{H}))\otimes H)\;\footnotesize\textnormal{(by (ii) of Definition \ref{PHopf algebra})}\\\nonumber =& m_{H} \circ((\mu_{H}\circ(H\otimes m_{H} )\circ(\eta_{H}\otimes\eta_{H}\otimes\eta_{H}))\otimes H)\;\footnotesize\textnormal{(by the condition of coalgebra morphism for $\eta_{H}$)}\\\nonumber =& m_{H} \circ(( m_{H} \circ(\eta_{H}\otimes \eta_{H}))\otimes H)\;\footnotesize\textnormal{(by unit property)}\\\nonumber =&(\varepsilon_{H}\circ\eta_{H})\otimes( m_{H} \circ(\eta_{H}\otimes H))\;\footnotesize\textnormal{(by \eqref{oldeq3})}\\\nonumber =& m_{H} \circ(\eta_{H}\otimes H)\;\footnotesize\textnormal{(by (co)unit property)}.
\end{align*}
Therefore, the equality
\begin{equation}\label{equality2Lemma3.5}
( b_{H}(K)\otimes b_{H}(K)\otimes H)\circ(H\otimes((\alpha_{H}\otimes\alpha_{H})\circ(\eta_{H}\otimes\eta_{H})))=( b_{H}(K)\otimes H)\circ(H\otimes(\alpha_{H}\circ\eta_{H}))
\end{equation}
holds because
\begin{align*}
	&( b_{H}(K)\otimes b_{H}(K)\otimes H)\circ(H\otimes((\alpha_{H}\otimes\alpha_{H})\circ(\eta_{H}\otimes\eta_{H})))\\\nonumber =& m_{H} \circ(H\otimes m_{H} )\circ(\eta_{H}\otimes\eta_{H}\otimes H)\;\footnotesize\textnormal{(by \eqref{eq1} and naturality of $c$)}\\\nonumber =& m_{H} \circ(\eta_{H}\otimes H)\;\footnotesize\textnormal{(by the idempotent character of $m_{H}\circ(\eta_{H}\otimes H)$)}\\\nonumber =&( b_{H}(K)\otimes H)\circ(H\otimes(\alpha_{H}\circ\eta_{H}))\;\footnotesize\textnormal{(by \eqref{eq1} and naturality of $c$)}.
\end{align*}
Then, using the previous equalities and the finite character of $H$, we have that
\begin{align*}
	&(H^{\ast}\otimes b_{H}(K)\otimes H)\circ(\alpha_{H}\otimes\alpha_{H})\circ(\eta_{H}\otimes\eta_{H})\\\nonumber =&(H^{\ast}\otimes b_{H}(K)\otimes b_{H}(K)\otimes H)\circ(  a_{H}(K)\otimes(\alpha_{H}\circ\eta_{H})\otimes(\alpha_{H}\circ\eta_{H}))\;\footnotesize\textnormal{(by the adjunction properties)}\\\nonumber =&(H^{\ast}\otimes b_{H}(K)\otimes H)\circ(  a_{H}(K)\otimes(\alpha_{H}\circ\eta_{H}))\;\footnotesize\textnormal{(by \eqref{equality2Lemma3.5})}\\\nonumber =&\alpha_{H}\circ\eta_{H}\;\footnotesize\textnormal{(by the adjunction properties)}.
\end{align*}
As a consequence, if $\beta_{H}$ is the convolution inverse of $\alpha_{H}$ in $\Hom(H,H^{\ast}\otimes H)$, we deduce the following:
\begin{align*}
	&(H^{\ast}\otimes b_{H}(K)\otimes b_{H}(K)\otimes H)\circ(\beta_{H}\otimes\alpha_{H}\otimes\alpha_{H})\circ((\delta_{H}\circ\eta_{H})\otimes\eta_{H})\\=& (H^{\ast}\otimes b_{H}(K)\otimes b_{H}(K)\otimes H)\circ(\beta_{H}\otimes\alpha_{H}\otimes\alpha_{H})\circ(\eta_{H}\otimes\eta_{H}\otimes\eta_{H})\;\footnotesize\textnormal{(by the condition of coalgebra}\\&\footnotesize\textnormal{morphism for $\eta_{H}$)}\\=& (H^{\ast}\otimes b_{H}(K)\otimes H)\circ(\beta_{H}\otimes\alpha_{H})\circ(\eta_{H}\otimes\eta_{H})\;\footnotesize\textnormal{(by \eqref{equality2Lemma3.5})}\\=& (H^{\ast}\otimes b_{H}(K)\otimes H)\circ(\beta_{H}\otimes\alpha_{H})\circ\delta_{H}\circ\eta_{H}\;\footnotesize\textnormal{(by the condition of coalgebra morphism for $\eta_{H}$)}\\=& (\varepsilon_{H}\circ\eta_{H})\otimes  a_{H}(K)\;\footnotesize\textnormal{(by (iv) of Definition \ref{PHopf algebra})}\\=&   a_{H}(K)\;\footnotesize\textnormal{(by the (co)unit properties)}
\end{align*}
and, on the other hand:
\begin{align*}\label{proof lema5}
	&(H^{\ast}\otimes b_{H}(K)\otimes b_{H}(K)\otimes H)\circ(\beta_{H}\otimes\alpha_{H}\otimes\alpha_{H})\circ((\delta_{H}\circ\eta_{H})\otimes\eta_{H})\\=&(\varepsilon_{H}\circ\eta_{H})\otimes((H^{\ast}\otimes b_{H}(K)\otimes H)\circ(  a_{H}(K)\otimes(\alpha_{H}\circ\eta_{H})))\;\footnotesize\textnormal{(by (iv) of Definition \ref{PHopf algebra})}\\=&\alpha_{H}\circ\eta_{H}\;\footnotesize\textnormal{(by (co)unit properties and the adjunction properties)}.
\end{align*}
So, by the two previous equalities, we obtain that \begin{equation}\label{proofequal}\alpha_{H}\circ\eta_{H}=a_{H}(K).\end{equation} Therefore, we conclude the proof as follows:
\begin{align*}
&id_{H}\\=&( b_{H}(K)\otimes H)\circ(H\otimes  a_{H}(K))\;\footnotesize\textnormal{(by the adjunction properties)}\\=&( b_{H}(K)\otimes H)\circ(H\otimes  (\alpha_{H}\circ\eta_{H}))\;\footnotesize\textnormal{(by \eqref{proofequal})}\\=&( b_{H}(K)\otimes m_{H} )\circ(H\otimes ((c_{H,H^{\ast}}\otimes H)\circ (\eta_{H}\otimes  a_{H}(K))))\;\footnotesize\textnormal{(by definition of $\alpha_{H}$)}\\=&(b_{H}(K)\otimes(m_{H}\circ c_{H,H}^{-1}))\circ(H\otimes a_{H}(K)\otimes \eta_{H})\;\footnotesize\textnormal{(by \eqref{finitebraid})}\\=&m_{H}\circ(\eta_{H}\otimes H)\;\footnotesize\textnormal{(by naturality of $c$ and the adjunction properties)}.\qedhere
\end{align*}
\end{proof}

The goal of the following results will consist on building a post-Hopf algebra from a brace triple. Suppose that $(H,\gamma_{H},T_{H})$ is a brace triple in $\sf{C}$ with $H$ finite. Note that conditions (ii), (iii) and (iv) of Definition \ref{BTdef} imply that $\gamma_{H}$ satisfies (i), (iii) and (ii) of Definition \ref{PHopf algebra}, respectively. So, in order to construct a post-Hopf algebra from a brace triple, it is enough to prove that $\alpha_{H}=(H^{\ast}\otimes\gamma_{H})\circ(c_{H,H^{\ast}}\otimes H)\circ(H\otimes  a_{H}(K))$ is convolution invertible in $\Hom(H,H^{\ast}\otimes H)$.

\begin{theorem}\label{inversebetaBT}
	Let $(H,\gamma_{H},T_{H})$ be a brace triple in $\sf{C}$ with $H$ finite. The morphism $\alpha_{H}=(H^{\ast}\otimes\gamma_{H})\circ(c_{H,H^{\ast}}\otimes H)\circ(H\otimes  a_{H}(K))$ is invertible for the convolution in $\Hom(H,H^{\ast}\otimes H)$ with inverse
	\[\beta_{H}\coloneqq \alpha_{H}\circ T_{H}^{-1}.\]
\end{theorem}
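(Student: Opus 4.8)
The plan is to transfer the claim through the $\sf{C}$-adjunction $H\otimes-\dashv H^{\ast}\otimes-$ and so reduce it to two identities involving $\gamma_{H}$, the product of $H_{\sf{BT}}$, and $T_{H}$. I would first collect the ingredients. By Theorem \ref{Prop Brace TB}, $H_{\sf{BT}}=(H,\eta_{H},\mu_{H}^{\scalebox{0.6}{\sf{BT}}},\varepsilon_{H},\delta_{H},T_{H})$ is a Hopf algebra with $\Gamma_{H}^{\scalebox{0.6}{\sf{BT}}}=\gamma_{H}$; since $T_{H}$ is an isomorphism, $(H_{\sf{BT}})^{cop}$ is a Hopf algebra in $\overline{\sf{C}}$ with antipode $T_{H}^{-1}$, whence, combining the antipode identities of $H_{\sf{BT}}$ with \eqref{morant} and (vi.1)--(vi.2) of Definition \ref{BTdef},
\[\mu_{H}^{\scalebox{0.6}{\sf{BT}}}\circ(T_{H}^{-1}\otimes H)\circ c_{H,H}^{-1}\circ\delta_{H}=\eta_{H}\circ\varepsilon_{H}=\mu_{H}^{\scalebox{0.6}{\sf{BT}}}\circ(H\otimes T_{H}^{-1})\circ c_{H,H}^{-1}\circ\delta_{H}.\]
Moreover, the computation that proves \eqref{eq1} in Lemma \ref{Propiedades1PHA} uses only the definition of $\alpha_{H}$ and \eqref{finitebraid}, so it applies verbatim here and gives $(b_{H}(K)\otimes H)\circ(H\otimes\alpha_{H})=\gamma_{H}\circ c_{H,H}^{-1}$; precomposing with $T_{H}^{-1}$ and using naturality of $c$, also $(b_{H}(K)\otimes H)\circ(H\otimes\beta_{H})=\gamma_{H}\circ(T_{H}^{-1}\otimes H)\circ c_{H,H}^{-1}$.

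Next I would pass to mates under the adjunction. For $f\colon H\rightarrow H^{\ast}\otimes H$ put $\widehat{f}\coloneqq(b_{H}(K)\otimes H)\circ(H\otimes f)\colon H\otimes H\rightarrow H$; then $f\mapsto\widehat{f}$ is a bijection $\Hom(H,H^{\ast}\otimes H)\cong\Hom(H\otimes H,H)$, and a short check with the adjunction properties shows that along it the convolution product of $\Hom(H,H^{\ast}\otimes H)$ corresponds to $\widehat{f\ast g}=\widehat{g}\circ(\widehat{f}\otimes H)\circ(H\otimes\delta_{H})$ while the convolution unit $\varepsilon_{H}\otimes a_{H}(K)$ corresponds to $H\otimes\varepsilon_{H}$. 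Since $\widehat{\alpha_{H}}=\gamma_{H}\circ c_{H,H}^{-1}$ and $\widehat{\beta_{H}}=\gamma_{H}\circ(T_{H}^{-1}\otimes H)\circ c_{H,H}^{-1}$ by the first paragraph, and $f\mapsto\widehat{f}$ is injective, the statement reduces to the two identities
\[\widehat{\beta_{H}}\circ(\widehat{\alpha_{H}}\otimes H)\circ(H\otimes\delta_{H})=H\otimes\varepsilon_{H}=\widehat{\alpha_{H}}\circ(\widehat{\beta_{H}}\otimes H)\circ(H\otimes\delta_{H}).\]

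For the first identity (the second is handled symmetrically) I would use naturality of $c^{-1}$ together with the braiding identity $c_{H\otimes H,H}^{-1}=(H\otimes c_{H,H}^{-1})\circ(c_{H,H}^{-1}\otimes H)$ to rearrange the left-hand side so that the two occurrences of $\gamma_{H}$ appear nested as $\gamma_{H}\circ(H\otimes\gamma_{H})\circ(\cdots)$, and then apply axiom (iv) of Definition \ref{BTdef} to replace $\gamma_{H}\circ(H\otimes\gamma_{H})$ by $\gamma_{H}\circ(\mu_{H}^{\scalebox{0.6}{\sf{BT}}}\otimes H)$. After re-sorting the braidings, what multiplies into the first leg of $\gamma_{H}$ is exactly $\mu_{H}^{\scalebox{0.6}{\sf{BT}}}\circ(T_{H}^{-1}\otimes H)\circ c_{H,H}^{-1}\circ\delta_{H}$ (for $\beta_{H}\ast\alpha_{H}$ it is instead $\mu_{H}^{\scalebox{0.6}{\sf{BT}}}\circ(H\otimes T_{H}^{-1})\circ c_{H,H}^{-1}\circ\delta_{H}$), which equals $\eta_{H}\circ\varepsilon_{H}$ by the first paragraph; finally axiom (v) of Definition \ref{BTdef}, $\gamma_{H}\circ(\eta_{H}\otimes H)=id_{H}$, together with the counit property collapses everything to $H\otimes\varepsilon_{H}$. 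This yields $\alpha_{H}\ast\beta_{H}=\varepsilon_{H}\otimes a_{H}(K)=\beta_{H}\ast\alpha_{H}$, which is the assertion. Alternatively, the same chain can be run directly on $H^{\ast}\otimes H$, absorbing the copairing $a_{H}(K)$ and the pairing $b_{H}(K)$ via \eqref{finitebraid} and the adjunction properties.

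The step I expect to be the main obstacle is precisely this last braiding bookkeeping: one must route the inverse braidings coming from $\widehat{\alpha_{H}}$ and $\widehat{\beta_{H}}$ so that the two copies of $\gamma_{H}$ line up for axiom (iv), and so that the ``antipode expression'' that appears is the co-opposite one (built from $c_{H,H}^{-1}\circ\delta_{H}$ and $T_{H}^{-1}$) rather than its mirror image; here (vi.1) of Definition \ref{BTdef} enters implicitly through $(H_{\sf{BT}})^{cop}$. No cocommutativity hypothesis is needed for this statement.
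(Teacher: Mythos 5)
Your argument is correct and follows the paper's proof in all essentials: both rest on rewriting $\alpha_{H}$ (hence $\beta_{H}$) as $\gamma_{H}\circ c_{H,H}^{-1}$ composed with the appropriate legs via \eqref{finitebraid} and the adjunction properties, then applying axiom (iv) of Definition \ref{BTdef} to merge the two copies of $\gamma_{H}$ into $\gamma_{H}\circ(\mu_{H}^{\scalebox{0.6}{\sf{BT}}}\otimes H)$, collapsing the resulting factor with the antipode identity \eqref{antipode} for $H_{\sf{BT}}^{cop}$, and finishing with axiom (v) and the counit. Your preliminary transfer of the convolution algebra to $\Hom(H\otimes H,H)$ through the adjunction mate is only a repackaging of what the paper does inline by carrying $a_{H}(K)$ and $b_{H}(K)$ through the computation.
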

\begin{proof}
	On the one side,
	\begin{align*}
		&(H^{\ast}\otimes b_{H}(K)\otimes H)\circ(\beta_{H}\otimes\alpha_{H})\circ\delta_{H}\\=& (H^{\ast}\otimes b_{H}(K)\otimes H)\circ(((H^{\ast}\otimes\gamma_{H})\circ(c_{H,H^{\ast}}\otimes H)\circ(T_{H}^{-1}\otimes  a_{H}(K)))\otimes((H^{\ast}\otimes\gamma_{H})\circ(c_{H,H^{\ast}}\otimes H)\\&\circ(H\otimes  a_{H}(K))))\circ\delta_{H}\;\footnotesize\textnormal{(by definition of $\alpha_{H}$ and $\beta_{H}$)}\\=& (H^{\ast}\otimes(\gamma_{H}\circ c_{H,H}^{-1}\circ(\gamma_{H}\otimes H)))\circ(((c_{H,H^{\ast}}\otimes H)\circ(T_{H}^{-1}\otimes  a_{H}(K)))\otimes H)\circ\delta_{H}\;\footnotesize\textnormal{(by \eqref{finitebraid} and the}\\&\footnotesize\textnormal{adjunction properties)}\\=& (H^{\ast}\otimes(\gamma_{H}\circ(H\otimes\gamma_{H})))\circ(((H^{\ast}\otimes c_{H,H}^{-1})\circ(c_{H,H^{\ast}}\otimes H)\circ(T_{H}^{-1}\otimes c_{H,H^{\ast}}))\otimes H)\circ(\delta_{H}\otimes  a_{H}(K))\\&\footnotesize\textnormal{(by naturality of $c$)}\\=& (H^{\ast}\otimes\gamma_{H})\circ(((H^{\ast}\otimes\mu_{H}^{\scalebox{0.6}{\sf{BT}}})\circ(c_{H,H^{\ast}}\otimes H)\circ(H\otimes c_{H,H^{\ast}}))\otimes H)\circ((c_{H,H}^{-1}\circ(T_{H}^{-1}\otimes H)\circ\delta_{H})\otimes  a_{H}(K))\\&\footnotesize\textnormal{(by naturality of $c$ and (iv) of Definition \ref{BTdef})}\\=& (H^{\ast}\otimes\gamma_{H})\circ(c_{H,H^{\ast}}\otimes H)\circ((\mu_{H}^{\scalebox{0.6}{\sf{BT}}}\circ c_{H,H}^{-1}\circ(T_{H}^{-1}\otimes H)\circ\delta_{H})\otimes  a_{H}(K))\;\footnotesize\textnormal{(by naturality of $c$)}\\=& (H^{\ast}\otimes\gamma_{H})\circ(c_{H,H^{\ast}}\otimes H)\circ((\eta_{H}\circ\varepsilon_{H})\otimes  a_{H}(K))\;\footnotesize\textnormal{(by \eqref{antipode} for $H_{\sf{BT}}^{cop}$)}\\=& \varepsilon_{H}\otimes  a_{H}(K)\;\footnotesize\textnormal{(by naturality of $c$ and (v) of Definition \ref{BTdef})}.
	\end{align*}
	On the other side,
	\begin{align*}
		&(H^{\ast}\otimes b_{H}(K)\otimes H)\circ(\alpha_{H}\otimes\beta_{H})\circ\delta_{H}\\=& (H^{\ast}\otimes b_{H}(K)\otimes H)\circ(((H^{\ast}\otimes\gamma_{H})\circ(c_{H,H^{\ast}}\otimes H)\circ(H\otimes  a_{H}(K)))\otimes((H^{\ast}\otimes\gamma_{H})\circ(c_{H,H^{\ast}}\otimes H)\\&\circ(T_{H}^{-1}\otimes  a_{H}(K))))\circ\delta_{H}\;\footnotesize\textnormal{(by definition of $\alpha_{H}$ and $\beta_{H}$)}\\=& (H^{\ast}\otimes(\gamma_{H}\circ c_{H,H}^{-1}))\circ(((H^{\ast}\otimes\gamma_{H})\circ(c_{H,H^{\ast}}\otimes H)\circ(H\otimes  a_{H}(K)))\otimes T_{H}^{-1})\circ\delta_{H}\;\footnotesize\textnormal{(by \eqref{finitebraid} and the}\\&\footnotesize\textnormal{adjunction properties)}\\=& (H^{\ast}\otimes(\gamma_{H}\circ(H\otimes\gamma_{H})\circ(c_{H,H}^{-1}\otimes H)))\circ (((c_{H,H^{\ast}}\otimes H)\circ (H\otimes c_{H,H^{\ast}}))\otimes H)\\&\circ(((H\otimes T_{H}^{-1})\circ\delta_{H})\otimes  a_{H}(K))\;\footnotesize\textnormal{(by naturality of $c$)}\\=& (H^{\ast}\otimes\gamma_{H})\circ(c_{H,H^{\ast}}\otimes H)\circ((\mu_{H}^{\scalebox{0.6}{\sf{BT}}}\circ(T_{H}^{-1}\otimes H)\circ c_{H,H}^{-1}\circ\delta_{H})\otimes  a_{H}(K))\;\footnotesize\textnormal{(by naturality of $c$ and (iv) of}\\&\footnotesize\textnormal{Definition \ref{BTdef})}\\=& (H^{\ast}\otimes\gamma_{H})\circ(c_{H,H^{\ast}}\otimes H)\circ((\eta_{H}\circ\varepsilon_{H})\otimes  a_{H}(K))\;\footnotesize\textnormal{(by \eqref{antipode} for $H_{\sf{BT}}^{cop}$)}\\=& \varepsilon_{H}\otimes   a_{H}(K)\;\footnotesize\textnormal{(by naturality of $c$ and (v) of Definition \ref{BTdef})}.\qedhere
	\end{align*}
\end{proof}

Previous theorem can be interpreted in a functorial way as follows: If we denote by $\sf{BT^{f}}$ the subcategory of brace triples whose underlying Hopf algebra is finite, then there exists a functor
$P\colon \sf{BT^{f}}\longrightarrow\sf{Post}\textnormal{-}\sf{Hopf}$
acting on objects by $P((H,\gamma_{H},T_{H}))=(H,\gamma_{H})$ and on morphisms by the identity.

\begin{theorem}\label{Th bialg subadj} Let $(H,m_{H})$ be an object in $\sf{cocPost}$-$\sf{Hopf}$, then			\[\widehat{H}=(H,\eta_{H},\widehat{\mu}_{H},\varepsilon_{H},\delta_{H})\]
	is a bialgebra in $\sf{C}$, where $\widehat{\mu}_{H}\coloneqq \mu_{H}\circ(H\otimes m_{H} )\circ(\delta_{H}\otimes H).$
\end{theorem}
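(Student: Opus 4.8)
The plan is to verify the three bialgebra axioms for $\widehat{H}$ directly, following verbatim the bialgebra part of the proof of Theorem \ref{Prop Brace TB} with three substitutions: axiom (i) of Definition \ref{BTdef} is replaced everywhere by cocommutativity of $H$ (which gives $c_{H,H}\circ\delta_{H}=\delta_{H}$, making every instance of that axiom trivial), and axioms (ii.1), (iii), (iv) of Definition \ref{BTdef} are replaced by conditions (i.1), (iii), (ii) of Definition \ref{PHopf algebra}, respectively. Since we only claim that $\widehat{H}$ is a bialgebra, the antipode-related data play no role here.

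First I would establish the unit property. On the one hand, $\widehat{\mu}_{H}\circ(\eta_{H}\otimes H)=\mu_{H}\circ(H\otimes m_{H})\circ((\delta_{H}\circ\eta_{H})\otimes H)=\mu_{H}\circ(\eta_{H}\otimes(m_{H}\circ(\eta_{H}\otimes H)))$, using that $\eta_{H}$ is a coalgebra morphism, and this equals $\mu_{H}\circ(\eta_{H}\otimes H)=id_{H}$ by \eqref{eq4} of Lemma \ref{Propiedades2PHA} and the unit property of $\mu_{H}$. On the other hand, $\widehat{\mu}_{H}\circ(H\otimes\eta_{H})=\mu_{H}\circ(H\otimes(m_{H}\circ(H\otimes\eta_{H})))\circ\delta_{H}=\mu_{H}\circ(H\otimes\varepsilon_{H}\otimes\eta_{H})\circ(\delta_{H}\otimes\eta_{H})=id_{H}$, where the second equality is \eqref{oldeq3} and the last is the counit property.

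Next I would prove associativity of $\widehat{\mu}_{H}$. Expanding $\widehat{\mu}_{H}\circ(\widehat{\mu}_{H}\otimes H)$ by definition, I would apply the coalgebra-morphism property of $\mu_{H}$, then (i.1) of Definition \ref{PHopf algebra} to split $\delta_{H}\circ m_{H}$, then use naturality of $c$, coassociativity of $\delta_{H}$ and cocommutativity of $H$ to reorganise the tensor factors, then (ii) of Definition \ref{PHopf algebra} (the weighted associativity) together with associativity of $\mu_{H}$, and finally (iii) of Definition \ref{PHopf algebra} to arrive at $\widehat{\mu}_{H}\circ(H\otimes\widehat{\mu}_{H})$; this is the associativity computation of Theorem \ref{Prop Brace TB} after the substitutions listed above.

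Finally I would check that $\widehat{\mu}_{H}$ is a coalgebra morphism ($\eta_{H}$ being one is part of the Hopf algebra structure of $H$). The counit part, $\varepsilon_{H}\circ\widehat{\mu}_{H}=\varepsilon_{H}\otimes\varepsilon_{H}$, is immediate from $\mu_{H}$ being a coalgebra morphism, (i.2) of Definition \ref{PHopf algebra} and the counit property. For the coproduct part, starting from $\delta_{H}\circ\widehat{\mu}_{H}$ I would again use the coalgebra-morphism property of $\mu_{H}$, then (i.1) of Definition \ref{PHopf algebra}, and then naturality of $c$, coassociativity of $\delta_{H}$ and cocommutativity, obtaining $(\widehat{\mu}_{H}\otimes\widehat{\mu}_{H})\circ(H\otimes c_{H,H}\otimes H)\circ(\delta_{H}\otimes\delta_{H})$, exactly as in the corresponding computation of Theorem \ref{Prop Brace TB}. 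The only delicate point throughout is the bookkeeping of the many copies of $\delta_{H}$ and $c_{H,H}$ when the tensor factors are shuffled; this is where cocommutativity is essential, since it is the precise replacement for axiom (i) of Definition \ref{BTdef}, and once it is invoked the manipulations are routine. This completes the verification that $\widehat{H}$ is a bialgebra.
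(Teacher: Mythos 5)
Your proposal is correct and follows essentially the same route as the paper: the paper's proof of this theorem is precisely the bialgebra portion of the proof of Theorem \ref{Prop Brace TB} with cocommutativity of $\delta_{H}$ standing in for axiom (i) of Definition \ref{BTdef} and with conditions (i.1), (iii), (ii) of Definition \ref{PHopf algebra} replacing (ii.1), (iii), (iv) of Definition \ref{BTdef}, and the unit property is settled exactly as you do via \eqref{oldeq3} and Lemma \ref{Propiedades2PHA}. Nothing further is needed.
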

\begin{proof}  Note that we already know that $(H,\varepsilon_{H},\delta_{H})$ is a coalgebra in $\sf{C}$ and that $\eta_{H}$ is a coalgebra morphism. Then, firstly, we have to compute that $(H,\eta_{H},\widehat{\mu}_{H})$ is an algebra in $\sf{C}$. Indeed, let's start proving the unit property. On the one hand,
\begin{align*}
	&\widehat{\mu}_{H}\circ(\eta_{H}\otimes H)\\=& \mu_{H}\circ(H\otimes m_{H} )\circ((\delta_{H}\circ\eta_{H})\otimes H)\;\footnotesize\textnormal{(by definition of $\widehat{\mu}_{H}$)}\\=& \mu_{H}\circ(H\otimes m_{H} )\circ(\eta_{H}\otimes\eta_{H}\otimes H)\;\footnotesize\textnormal{(by the condition of coalgebra morphism for $\eta_{H}$)}\\=&   m_{H} \circ(\eta_{H}\otimes H)\;\footnotesize\textnormal{(by unit properties)}\\=&  id_{H}\;\footnotesize\textnormal{(by \eqref{eq4})}
	\end{align*}
	and, on the other hand,
	\begin{align*}
&\widehat{\mu}_{H}\circ(H\otimes \eta_{H})\\=& \mu_{H}\circ(H\otimes m_{H} )\circ(\delta_{H}\otimes \eta_{H})\;\footnotesize\textnormal{(by definition of $\widehat{\mu}_{H}$)}\\=& \mu_{H}\circ(H\otimes\varepsilon_{H}\otimes\eta_{H})\circ\delta_{H}\;\footnotesize\textnormal{(by \eqref{oldeq3})}\\=& id_{H}\;\footnotesize\textnormal{(by (co)unit properties)}.
\end{align*}
The associativity of $\widehat{\mu}_{H}$ follows by:
\begin{align*}
	&\widehat{\mu}_{H}\circ(\widehat{\mu}_{H}\otimes H)\\=& \mu_{H}\circ(H\otimes m_{H} )\circ((\delta_{H}\circ\mu_{H}\circ(H\otimes m_{H} )\circ(\delta_{H}\otimes H))\otimes H)\;\footnotesize\textnormal{(by definition of $\widehat{\mu}_{H}$)}\\=& \mu_{H}\circ(\mu_{H}\otimes( m_{H} \circ(\mu_{H}\otimes H)))\circ(H\otimes c_{H,H}\otimes H\otimes H)\circ (\delta_{H}\otimes(\delta_{H}\circ m_{H} )\otimes H)\circ(\delta_{H}\otimes H\otimes H)\\&\footnotesize\textnormal{(by the condition of coalgebra morphism for $\mu_{H}$)}\\=& \mu_{H}\circ(\mu_{H}\otimes( m_{H} \circ(\mu_{H}\otimes H)))\circ(H\otimes c_{H,H}\otimes H\otimes H)\circ (\delta_{H}\otimes(( m_{H} \otimes m_{H} )\circ(H\otimes c_{H,H}\otimes H)\\&\circ(\delta_{H}\otimes \delta_{H}))\otimes H)\circ(\delta_{H}\otimes H\otimes H)\;\footnotesize\textnormal{(by (i.1) of Definition \ref{PHopf algebra})}\\=& \mu_{H}\circ((\mu_{H}\circ(H\otimes m_{H}))\otimes(m_{H}\circ(\mu_{H}\otimes H)\circ(H\otimes m_{H}\otimes H)))\circ(((H\otimes H\otimes c_{H,H}\otimes H\otimes H)\\&\circ(H\otimes c_{H,H}\otimes c_{H,H}\otimes H)\circ(((\delta_{H}\otimes\delta_{H})\circ\delta_{H})\otimes\delta_{H}))\otimes H)\;\footnotesize\textnormal{(by naturality of $c$)}\\=& \mu_{H}\circ((\mu_{H}\circ(H\otimes m_{H})\circ(\delta_{H}\otimes H))\otimes(m_{H}\circ(\mu_{H}\otimes H)))\\&\circ(H\otimes ((c_{H,H}\otimes m_{H})\circ(H\otimes c_{H,H}\otimes H)\circ(\delta_{H}\otimes H\otimes H))\otimes H)\circ(\delta_{H}\otimes\delta_{H}\otimes H)\\&\footnotesize\textnormal{(by coassociativity and cocommutativity of $\delta_{H}$)}\\=& \mu_{H}\circ((\mu_{H}\circ(H\otimes m_{H})\circ(\delta_{H}\otimes H))\otimes(m_{H}\circ((\mu_{H}\circ(H\otimes m_{H})\circ(\delta_{H}\otimes H))\otimes H)))\\&\circ(((H\otimes c_{H,H}\otimes H)\circ(\delta_{H}\otimes\delta_{H}))\otimes H)\;\footnotesize\textnormal{(by naturality of $c$)}\\=& \mu_{H}\circ((\mu_{H}\circ(H\otimes m_{H})\circ(\delta_{H}\otimes H))\otimes(m_{H}\circ(H\otimes m_{H})))\circ(((H\otimes c_{H,H}\otimes H)\circ(\delta_{H}\otimes\delta_{H}))\otimes H)\\&\footnotesize\textnormal{(by (ii) of Definition \ref{PHopf algebra})}\\=& \mu_{H}\circ(H\otimes(\mu_{H}\circ(m_{H}\otimes m_{H})\circ(H\otimes c_{H,H}\otimes H)\circ(\delta_{H}\otimes H\otimes H)))\circ(H\otimes H\otimes H\otimes m_{H})\\&\circ(\delta_{H}\otimes\delta_{H}\otimes H)\;\footnotesize\textnormal{(by coassociativity of $\delta_{H}$ and associativity of $\mu_{H}$)}\\=& \mu_{H}\circ(H\otimes m_{H})\circ(\delta_{H}\otimes(\mu_{H}\circ(H\otimes m_{H})\circ(\delta_{H}\otimes H)))\;\footnotesize\textnormal{(by (iii) of Definition \ref{PHopf algebra})}\\=& \widehat{\mu}_{H}\circ(H\otimes\widehat{\mu}_{H})\;\footnotesize\textnormal{(by definition of $\widehat{\mu}_{H}$)}.
\end{align*}

Finally, we will prove that $\widehat{\mu}_{H}$ is a coalgebra morphism. By the condition of coalgebra morphism for $\mu_{H}$, (i.2) of Definition \ref{PHopf algebra} and the counit property, it is straightforward to compute that $\varepsilon_{H}\circ\widehat{\mu}_{H}=\varepsilon_{H}\otimes\varepsilon_{H}$. Moreover,
\begin{align*}
	&\delta_{H}\circ\widehat{\mu}_{H}\\=& \delta_{H}\circ\mu_{H}\circ(H\otimes  m_{H} )\circ(\delta_{H}\otimes H)\;\footnotesize\textnormal{(by definition of $\widehat{\mu}_{H}$)}\\=& (\mu_{H}\otimes\mu_{H})\circ(H\otimes c_{H,H}\otimes H)\circ(\delta_{H}\otimes(\delta_{H}\circ m_{H}))\circ(\delta_{H}\otimes H)\;\footnotesize\textnormal{(by the condition of coalgebra}\\&\footnotesize\textnormal{morphism for $\mu_{H}$)}\\=& (\mu_{H}\otimes\mu_{H})\circ(H\otimes c_{H,H}\otimes H)\circ(\delta_{H}\otimes (( m_{H} \otimes m_{H} )\circ(H\otimes c_{H,H}\otimes H)\circ(\delta_{H}\otimes\delta_{H})))\\&\circ(\delta_{H}\otimes H)\;\footnotesize\textnormal{(by (i.1) of Definition \ref{PHopf algebra})}\\=& ((\mu_{H}\circ(H\otimes m_{H}))\otimes(\mu_{H}\circ(H\otimes m_{H})))\circ(H\otimes((H\otimes c_{H,H}\otimes H)\circ(c_{H,H}\otimes c_{H,H}))\otimes H)\\&\circ(((\delta_{H}\otimes\delta_{H})\circ\delta_{H})\otimes\delta_{H})\;\footnotesize\textnormal{(by naturality of $c$)}\\=& ((\mu_{H}\circ(H\otimes m_{H})\circ(\delta_{H}\otimes H))\otimes(\mu_{H}\circ(H\otimes m_{H})))\circ(H\otimes((c_{H,H}\otimes H)\circ(H\otimes c_{H,H})\\&\circ(\delta_{H}\otimes H))\otimes H)\circ(\delta_{H}\otimes\delta_{H})\;\footnotesize\textnormal{(by cocommutativity and coassociativity of $\delta_{H}$)}\\=& ((\mu_{H}\circ(H\otimes m_{H} )\circ(\delta_{H}\otimes H))\otimes(\mu_{H}\circ(H\otimes m_{H} )\circ(\delta_{H}\otimes H)))\circ(H\otimes c_{H,H}\otimes H)\\&\circ(\delta_{H}\otimes \delta_{H})\;\footnotesize\textnormal{(by naturality of $c$)}\\=& (\widehat{\mu}_{H}\otimes\widehat{\mu}_{H})\circ(H\otimes c_{H,H}\otimes H)\circ(\delta_{H}\otimes\delta_{H})\;\footnotesize\textnormal{(by definition of $\widehat{\mu}_{H}$)}.\qedhere
\end{align*}
\end{proof}
\begin{corollary}
	If $(H,m_{H})$ is an object in $\sf{cocPost}$-$\sf{Hopf}$, then $(H,m_{H})$ is a left $\widehat{H}$-module algebra-coalgebra.
\end{corollary}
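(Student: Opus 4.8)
The plan is to verify, one by one, the defining conditions of a left $\widehat{H}$-module algebra and of a left $\widehat{H}$-module coalgebra, observing that each of them is literally one of the post-Hopf axioms or one of the identities already established for $m_{H}$. Throughout, recall that $\widehat{H}$ has the same underlying object, counit and coproduct as $H$, so that the left $\widehat{H}$-action on $H\otimes H$ is $\varphi_{H\otimes H}=(m_{H}\otimes m_{H})\circ(H\otimes c_{H,H}\otimes H)\circ(\delta_{H}\otimes H\otimes H)$.

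First I would check that $(H,m_{H})$ is a left $\widehat{H}$-module. The unit condition $m_{H}\circ(\eta_{H}\otimes H)=id_{H}$ is exactly \eqref{eq4}. For the associativity condition $m_{H}\circ(\widehat{H}\otimes m_{H})=m_{H}\circ(\widehat{\mu}_{H}\otimes H)$, I would simply unfold $\widehat{\mu}_{H}=\mu_{H}\circ(H\otimes m_{H})\circ(\delta_{H}\otimes H)$ on the right-hand side and recognise that the resulting equality is precisely the weighted associativity (ii) of Definition \ref{PHopf algebra}.

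Next, the module-algebra conditions. That $\eta_{H}$ is a morphism of left $\widehat{H}$-modules, i.e. $m_{H}\circ(\widehat{H}\otimes\eta_{H})=\varepsilon_{H}\otimes\eta_{H}$, is \eqref{oldeq3}. That $\mu_{H}$ is a morphism of left $\widehat{H}$-modules, i.e. $m_{H}\circ(\widehat{H}\otimes\mu_{H})=\mu_{H}\circ\varphi_{H\otimes H}$, is, after substituting the description of $\varphi_{H\otimes H}$ recalled above, exactly condition (iii) of Definition \ref{PHopf algebra}. Hence $(H,m_{H})$ is a left $\widehat{H}$-module algebra.

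Finally, for the module-coalgebra structure it suffices, by the characterisation recalled right after the definition of left module coalgebra (namely that $(D,\varphi_{D})$ is a left module coalgebra precisely when $\varphi_{D}$ is a coalgebra morphism), to note that $m_{H}$ is a coalgebra morphism, which is condition (i) of Definition \ref{PHopf algebra}. Since the underlying object $H$ carries simultaneously its algebra and coalgebra structures, combining the previous two paragraphs shows that $(H,m_{H})$ is a left $\widehat{H}$-module algebra-coalgebra. There is no real obstacle here: the only point requiring care is that $\widehat{H}$ is genuinely a bialgebra — which is where cocommutativity of $H$ enters, via Theorem \ref{Th bialg subadj} — so that the notions of $\widehat{H}$-module algebra and $\widehat{H}$-module coalgebra make sense; once that is granted, the verification is a direct translation between the module axioms and the post-Hopf axioms.
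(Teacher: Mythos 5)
Your proposal is correct and follows essentially the same route as the paper: the module axioms are matched to \eqref{eq4} and condition (ii) of Definition \ref{PHopf algebra}, the module-algebra conditions to \eqref{oldeq3} and condition (iii), and the module-coalgebra condition to the fact that $m_{H}$ is a coalgebra morphism by condition (i). Your additional remark that Theorem \ref{Th bialg subadj} is what makes these notions meaningful is a sensible clarification but does not change the argument.
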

\begin{proof} It is a consequence of the following facts: Thanks to conditions (ii) of Definition \ref{PHopf algebra} and \eqref{eq4}, $(H,m_{H})$ is a left $\widehat{H}$-module. Moreover, by \eqref{oldeq3} and (iii) of Definition \ref{PHopf algebra}, $\eta_{H}$ and $\mu_{H}$ are morphisms of left $\widehat{H}$-modules, respectively. To finish, $m_{H}$ is a coalgebra morphism by (i) of Definition \ref{PHopf algebra} which implies that $(H,m_{H})$ is a left $\widehat{H}$-module coalgebra.
\end{proof}
Along the following results, we are going to study some properties about the morphism
\begin{align*}&\widehat{\lambda}_{H}\coloneqq ( b_{H}(K)\otimes H)\circ(c_{H^{\ast},H}\otimes H)\circ(H^{\ast}\otimes c_{H,H})\circ(\beta_{H}\otimes\lambda_{H})\circ\delta_{H}\\=& ( b_{H}(K)\otimes H)\circ(H\otimes\beta_{H})\circ c_{H,H}\circ(H\otimes\lambda_{H})\circ\delta_{H}\;\footnotesize\textnormal{(by naturality of $c$)}\end{align*}
with the final objective of proving that it is the antipode for $\widehat{H}$. We are going to denote by $\hat{\ast}$ to the convolution in $\mathcal{H}(H,\widehat{H})$.
\begin{remark}
	First of all, note that 
	\begin{equation}\label{lambdatr cocom}
	\widehat{\lambda}_{H}=( b_{H}(K)\otimes H)\circ(\lambda_{H}\otimes\beta_{H})\circ\delta_{H}
	\end{equation}
	when $(H,m_{H})$ is a cocommutative post-Hopf algebra. Indeed,
	\begin{align*}
		&\widehat{\lambda}_{H}= ( b_{H}(K)\otimes H)\circ(c_{H^{\ast},H}\otimes H)\circ(H^{\ast}\otimes c_{H,H})\circ(\beta_{H}\otimes\lambda_{H})\circ\delta_{H}\\\nonumber =&( b_{H}(K)\otimes H)\circ(\lambda_{H}\otimes\beta_{H})\circ c_{H,H}\circ\delta_{H}\;\footnotesize\textnormal{(by naturality of $c$)}\\\nonumber =&( b_{H}(K)\otimes H)\circ(\lambda_{H}\otimes\beta_{H})\circ \delta_{H}\;\footnotesize\textnormal{(by cocommutativity of $\delta_{H}$)}.
	\end{align*}
\end{remark}
\begin{lemma}\label{relation lambdaH y lambdaHhat}
	If $(H, m_{H} )$ is a cocommutative post-Hopf algebra in $\sf{C}$, then
	\begin{equation}\label{prop lambdatr 1}
		 m_{H} \circ(H\otimes\widehat{\lambda}_{H})\circ\delta_{H}=\lambda_{H}.
	\end{equation}
	As a consequence,
		\begin{equation}\label{convolution1}
		id_{H}\hatast\widehat{\lambda}_{H}=\varepsilon_{H}\otimes\eta_{H}.
	\end{equation}
\end{lemma}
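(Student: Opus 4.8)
The plan is to establish \eqref{prop lambdatr 1} first and then read off \eqref{convolution1}. For \eqref{prop lambdatr 1} the three ingredients I would use are: the cocommutative expression \eqref{lambdatr cocom} for $\widehat{\lambda}_{H}$, the identity \eqref{eq2} that rebuilds $m_{H}$ out of $\alpha_{H}$, and the fact that, by (iv) of Definition \ref{PHopf algebra}, $\beta_{H}$ is the convolution inverse of $\alpha_{H}$ in $\Hom(H,H^{\ast}\otimes H)$, that is $(H^{\ast}\otimes b_{H}(K)\otimes H)\circ(\beta_{H}\otimes\alpha_{H})\circ\delta_{H}=\varepsilon_{H}\otimes a_{H}(K)$.

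First I would substitute $m_{H}=(b_{H}(K)\otimes H)\circ(H\otimes\alpha_{H})\circ c_{H,H}$ from \eqref{eq2}; pushing $c_{H,H}$ past $H\otimes\widehat{\lambda}_{H}$ by naturality of $c$ and then absorbing it via cocommutativity ($c_{H,H}\circ\delta_{H}=\delta_{H}$) turns the left-hand side of \eqref{prop lambdatr 1} into $(b_{H}(K)\otimes H)\circ(\widehat{\lambda}_{H}\otimes\alpha_{H})\circ\delta_{H}$. Next, substitute \eqref{lambdatr cocom}, that is $\widehat{\lambda}_{H}=(b_{H}(K)\otimes H)\circ(\lambda_{H}\otimes\beta_{H})\circ\delta_{H}$. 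The two occurrences of $b_{H}(K)$ now act on disjoint tensor factors, so a routine rearrangement (the interchange law together with the adjunction properties) recombines them into $(b_{H}(K)\otimes H)\circ(H\otimes\mu_{H^{\ast}\otimes H})$, with $\mu_{H^{\ast}\otimes H}=H^{\ast}\otimes b_{H}(K)\otimes H$; using coassociativity of $\delta_{H}$ to group the $\beta_{H}$- and $\alpha_{H}$-legs, the expression becomes
\[(b_{H}(K)\otimes H)\circ\bigl(\lambda_{H}\otimes(\mu_{H^{\ast}\otimes H}\circ(\beta_{H}\otimes\alpha_{H})\circ\delta_{H})\bigr)\circ\delta_{H}.\]
The inner morphism is precisely $\beta_{H}\ast\alpha_{H}$, which is $\varepsilon_{H}\otimes a_{H}(K)$ by (iv) of Definition \ref{PHopf algebra}; substituting and simplifying with the counit property leaves $(b_{H}(K)\otimes H)\circ(H\otimes a_{H}(K))\circ\lambda_{H}$, and this equals $\lambda_{H}$ by the adjunction properties. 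This proves \eqref{prop lambdatr 1}.

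For \eqref{convolution1} I would argue directly. By definition $id_{H}\hatast\widehat{\lambda}_{H}=\widehat{\mu}_{H}\circ(H\otimes\widehat{\lambda}_{H})\circ\delta_{H}$ with $\widehat{\mu}_{H}=\mu_{H}\circ(H\otimes m_{H})\circ(\delta_{H}\otimes H)$, and coassociativity of $\delta_{H}$ gives $(\delta_{H}\otimes H)\circ(H\otimes\widehat{\lambda}_{H})\circ\delta_{H}=(H\otimes((H\otimes\widehat{\lambda}_{H})\circ\delta_{H}))\circ\delta_{H}$, so that the last two factors assemble into $m_{H}\circ(H\otimes\widehat{\lambda}_{H})\circ\delta_{H}=\lambda_{H}$ by \eqref{prop lambdatr 1}. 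Hence $id_{H}\hatast\widehat{\lambda}_{H}=\mu_{H}\circ(H\otimes\lambda_{H})\circ\delta_{H}=id_{H}\ast\lambda_{H}=\eta_{H}\circ\varepsilon_{H}=\varepsilon_{H}\otimes\eta_{H}$ by \eqref{antipode}.

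The step I expect to be the main obstacle is the recombination of the two $b_{H}(K)$'s in the second paragraph: conceptually it is just "associativity of evaluation" for the canonical $H^{\ast}\otimes H$-module structure on the finite object $H$, but making it rigorous requires carefully keeping track of where each $a_{H}(K)$, each $b_{H}(K)$ and each braiding acts. It is also exactly where cocommutativity is essential — through \eqref{lambdatr cocom} and, in the first reduction, through $c_{H,H}\circ\delta_{H}=\delta_{H}$ — while everything else is routine bookkeeping with coassociativity, naturality of $c$, the counit property and the triangle identities.
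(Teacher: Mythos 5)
Your proof is correct and follows essentially the same route as the paper: both reduce $m_{H}\circ(H\otimes\widehat{\lambda}_{H})\circ\delta_{H}$, via \eqref{eq2} and \eqref{lambdatr cocom}, to the convolution identity $(H^{\ast}\otimes b_{H}(K)\otimes H)\circ(\beta_{H}\otimes\alpha_{H})\circ\delta_{H}=\varepsilon_{H}\otimes a_{H}(K)$ from (iv) of Definition \ref{PHopf algebra}, using cocommutativity and coassociativity to line up the legs, and then derive \eqref{convolution1} exactly as you do. The only difference is cosmetic: you absorb the braiding $c_{H,H}$ into $\delta_{H}$ at the outset by cocommutativity, whereas the paper carries it through several naturality steps; your recombination of the two evaluations via the interchange law is sound.
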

\begin{proof} Let's start proving \eqref{prop lambdatr 1}:
	\begin{align*}
&m_{H} \circ(H\otimes\widehat{\lambda}_{H})\circ\delta_{H}\\=&m_{H}\circ(H\otimes((b_{H}(K)\otimes H)\circ(\lambda_{H}\otimes\beta_{H})\circ\delta_{H}))\circ\delta_{H}\;\footnotesize\textnormal{(by \eqref{lambdatr cocom})}\\=&(b_{H}(K)\otimes H)\circ(H\otimes\alpha_{H})\circ c_{H,H}\circ(H\otimes((b_{H}(K)\otimes H)\circ(\lambda_{H}\otimes\beta_{H})\circ\delta_{H}))\circ\delta_{H}\;\footnotesize\textnormal{(by \eqref{eq2})}\\=&((b_{H}(K)\circ(\lambda_{H}\otimes H^{\ast}))\otimes(b_{H}(K)\circ c_{H^{\ast},H})\otimes H)\circ(H\otimes c_{H^{\ast},H^{\ast}}\otimes c_{H,H})\circ(H\otimes H^{\ast}\otimes c_{H,H^{\ast}}\otimes H)\\&\circ(((c_{H^{\ast},H}\otimes H)\circ(H^{\ast}\otimes c_{H,H})\circ(\alpha_{H}\otimes H))\otimes \beta_{H})\circ(H\otimes\delta_{H})\circ\delta_{H}\;\footnotesize\textnormal{(by naturality of $c$)}\\=&((b_{H}(K)\circ(\lambda_{H}\otimes H^{\ast}))\otimes H)\circ(H\otimes((H^{\ast}\otimes b_{H}(K)\otimes H)\circ(\beta_{H}\otimes\alpha_{H})))\circ(H\otimes c_{H,H})\\&\circ(c_{H,H}\otimes H)\circ(H\otimes\delta_{H})\circ\delta_{H}\;\footnotesize\textnormal{(by naturality of $c$)}\\=&((b_{H}(K)\circ(\lambda_{H}\otimes H^{\ast}))\otimes H)\circ(H\otimes((H^{\ast}\otimes b_{H}(K)\otimes H)\circ(\beta_{H}\otimes\alpha_{H})\circ\delta_{H}))\circ\delta_{H}\;\footnotesize\textnormal{(by naturality of $c$ and}\\&\footnotesize\textnormal{cocommutativity and coassociativity of $\delta_{H}$)}\\=&((b_{H}(K)\circ(\lambda_{H}\otimes H^{\ast}))\otimes H)\circ(H\otimes(\varepsilon_{H}\otimes a_{H}(K)))\circ\delta_{H}\;\footnotesize\textnormal{(by (iv) of Definition \ref{PHopf algebra})}\\=&\lambda_{H}\;\footnotesize\textnormal{(by counit property and the adjunction properties)}.
	\end{align*}
From the previous identity we obtain the following:
	\begin{align*}
		&id_{H}\hatast \widehat{\lambda}_{H}\\=& \widehat{\mu}_{H}\circ(H\otimes \widehat{\lambda}_{H})\circ\delta_{H}\;\footnotesize\textnormal{(by definition of $\hatast$)}\\=& \mu_{H}\circ(H\otimes m_{H} )\circ(\delta_{H}\otimes\widehat{\lambda}_{H})\circ\delta_{H}\;\footnotesize\textnormal{(by definition of $\widehat{\mu}_{H}$)}\\=&  \mu_{H}\circ(H\otimes( m_{H} \circ(H\otimes\widehat{\lambda}_{H})\circ\delta_{H}))\circ\delta_{H}\;\footnotesize\textnormal{(by coassociativity of $\delta_{H}$)}\\=& id_{H}\ast\lambda_{H}\;\footnotesize\textnormal{(by \eqref{prop lambdatr 1})}\\=& \varepsilon_{H}\otimes\eta_{H}\;\footnotesize\textnormal{(by \eqref{antipode})}.\qedhere
	\end{align*}
\end{proof}

The aim of the following results will be to prove that the convolution in the opposite direction is also the identity element, i.e., $\widehat{\lambda}_{H}\hatast id_{H}=\varepsilon_{H}\otimes \eta_{H}$.
\begin{lemma}\label{tildealpha de coalgebras}
	If $(H,m_{H})$ is a cocommutative post-Hopf algebra in $\sf{C}$, then $$\widetilde{\alpha}_{H}\coloneqq ( b_{H}(K)\otimes H)\circ(H\otimes\alpha_{H})\colon H\otimes H\rightarrow H$$ is a coalgebra morphism.
\end{lemma}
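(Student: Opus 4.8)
The plan is to reduce everything to the description of $\widetilde{\alpha}_{H}$ already obtained in Lemma \ref{Propiedades1PHA}. By \eqref{eq1} one has $\widetilde{\alpha}_{H}=m_{H}\circ c_{H,H}^{-1}$, and since $H$ is cocommutative, \eqref{ccb} gives $c_{H,H}^{-1}=c_{H,H}$, so in fact $\widetilde{\alpha}_{H}=m_{H}\circ c_{H,H}$. It then suffices to verify the two conditions defining a coalgebra morphism from $H\otimes H$ (endowed with its tensor coalgebra structure $\varepsilon_{H\otimes H}=\varepsilon_{H}\otimes\varepsilon_{H}$, $\delta_{H\otimes H}=(H\otimes c_{H,H}\otimes H)\circ(\delta_{H}\otimes\delta_{H})$) to $H$.

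For the counit the computation $\varepsilon_{H}\circ\widetilde{\alpha}_{H}=\varepsilon_{H}\circ m_{H}\circ c_{H,H}=(\varepsilon_{H}\otimes\varepsilon_{H})\circ c_{H,H}=\varepsilon_{H}\otimes\varepsilon_{H}$ is immediate, using (i.2) of Definition \ref{PHopf algebra}, naturality of $c$, and $c_{K,K}=id_{K}$. For the coproduct I would start from $\delta_{H}\circ\widetilde{\alpha}_{H}=\delta_{H}\circ m_{H}\circ c_{H,H}$ and apply (i.1) of Definition \ref{PHopf algebra} to $\delta_{H}\circ m_{H}$, obtaining
\[\delta_{H}\circ\widetilde{\alpha}_{H}=(m_{H}\otimes m_{H})\circ(H\otimes c_{H,H}\otimes H)\circ(\delta_{H}\otimes\delta_{H})\circ c_{H,H},\]
while the target morphism is
\[(\widetilde{\alpha}_{H}\otimes\widetilde{\alpha}_{H})\circ\delta_{H\otimes H}=(m_{H}\otimes m_{H})\circ(c_{H,H}\otimes c_{H,H})\circ(H\otimes c_{H,H}\otimes H)\circ(\delta_{H}\otimes\delta_{H}).\]
Thus the statement reduces to the braiding identity
\[(H\otimes c_{H,H}\otimes H)\circ(\delta_{H}\otimes\delta_{H})\circ c_{H,H}=(c_{H,H}\otimes c_{H,H})\circ(H\otimes c_{H,H}\otimes H)\circ(\delta_{H}\otimes\delta_{H}).\]
This I would prove by moving $c_{H,H}$ to the left through $\delta_{H}\otimes\delta_{H}$ via naturality of $c$, rewriting the resulting braiding on $H^{\otimes 2}$ in terms of the elementary braidings through the hexagon axioms, and then cancelling a repeated crossing by means of \eqref{ccb} (this is precisely where cocommutativity of $\delta_{H}$ enters), together with coassociativity of $\delta_{H}$. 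Equivalently, one may observe that under cocommutativity \eqref{ccb} makes the braiding restrict to a symmetry on the tensor powers of $H$, whence $c_{H,H}$ is a coalgebra automorphism of $(H\otimes H,\delta_{H\otimes H})$ and $\widetilde{\alpha}_{H}=m_{H}\circ c_{H,H}$ is a composite of coalgebra morphisms.

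The only genuinely delicate point is this braiding identity in the coproduct step: it is exactly here that cocommutativity is indispensable, since the analogous equality fails for a general (non-cocommutative) braided post-Hopf algebra, as a crossing-number count shows. All the remaining manipulations are short calculations with \eqref{eq1}, naturality of $c$, coassociativity of $\delta_{H}$, and the (co)unit properties.
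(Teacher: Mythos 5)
Your proof is correct and follows essentially the same route as the paper: both identify $\widetilde{\alpha}_{H}=m_{H}\circ c_{H,H}^{-1}=m_{H}\circ c_{H,H}$ via \eqref{eq1} and \eqref{ccb}, and then establish the comultiplication condition from (i.1) of Definition \ref{PHopf algebra} together with naturality of $c$ and one cancellation of a double crossing through \eqref{ccb} (the paper packages this as equation \eqref{mHdeltaHc}). Your closing observation that \eqref{ccb} makes $c_{H,H}$ a coalgebra automorphism of $H\otimes H$ is a clean reformulation of the same computation.
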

\begin{proof}
	From (i.1) of Definition \ref{PHopf algebra}, \eqref{ccb} and the naturality of $c$ we can deduce that 
	\begin{equation}\label{mHdeltaHc}
	\delta_{H}\circ m_{H} \circ c_{H,H}=(( m_{H} \circ c_{H,H})\otimes( m_{H} \circ c_{H,H}))\circ (H\otimes c_{H,H}\otimes H)\circ(\delta_{H}\otimes\delta_{H})
	\end{equation}
	holds. Therefore, we obtain that:
	\begin{align*}
	&\delta_{H}\circ\widetilde{\alpha}_{H}\\=& ( b_{H}(K)\otimes\delta_{H})\circ(H\otimes\alpha_{H})\;\footnotesize\textnormal{(by definition of $\widetilde{\alpha}_{H}$)}\\=& \delta_{H}\circ m_{H}\circ c_{H,H}\;\footnotesize\textnormal{(by \eqref{ccb} and \eqref{eq1})}\\=& ((m_{H}\circ c_{H,H})\otimes(m_{H}\circ c_{H,H}))\circ (H\otimes c_{H,H}\otimes H)\circ(\delta_{H}\otimes\delta_{H})\;\footnotesize\textnormal{(by \eqref{mHdeltaHc})}\\=& (\widetilde{\alpha}_{H}\otimes\widetilde{\alpha}_{H})\circ(H\otimes c_{H,H}\otimes H)\circ(\delta_{H}\otimes\delta_{H})\;\footnotesize\textnormal{(by \eqref{ccb}, \eqref{eq1} and definition of $\widetilde{\alpha}_{H}$)}.
	\end{align*}
Moreover, by \eqref{eq1} and (i.2) of Definition \ref{PHopf algebra},  it is easy to prove that $\varepsilon_{H}\circ\widetilde{\alpha}_{H}=\varepsilon_{H}\otimes\varepsilon_{H}$.
\end{proof}
	
Let $(H,m_{H})$ be a post-Hopf algebra in $\sf{C}$ and consider now the morphism $$\widetilde{\beta}_{H}\coloneqq ( b_{H}(K)\otimes H)\circ(H\otimes\beta_{H})\colon H\otimes H\rightarrow H.$$
\begin{lemma}
	Let $(H,m_{H})$ be a post-Hopf algebra in $\sf{C}$. It is satisfied that
	\begin{equation}\label{property for betatilde1}
		\varepsilon_{H}\circ\widetilde{\beta}_{H}=\varepsilon_{H}\otimes\varepsilon_{H}.
	\end{equation}
\end{lemma}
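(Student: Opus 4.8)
The plan is to reduce the whole statement to the single identity $(H^{\ast}\otimes\varepsilon_{H})\circ\beta_{H}=(H^{\ast}\otimes\varepsilon_{H})\circ\alpha_{H}$, which ``linearises'' the convolution-invertibility of $\beta_{H}$ against the counit of $H$, and then to invoke \eqref{eq1}. First I would record two auxiliary facts. Set $e\coloneqq(H^{\ast}\otimes\varepsilon_{H})\circ a_{H}(K)\colon K\rightarrow H^{\ast}$. Using that $b_{H}$ is a natural transformation, its naturality with respect to $\varepsilon_{H}\colon H\rightarrow K$ together with the $\sf{C}$-adjunction identity $(b_{H}(K)\otimes H)\circ(H\otimes a_{H}(K))=id_{H}$ yields $b_{H}(K)\circ(H\otimes e)=\varepsilon_{H}$. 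Secondly, composing $H^{\ast}\otimes\varepsilon_{H}$ with the defining expression of $\alpha_{H}$ and using (i.2) of Definition \ref{PHopf algebra} (that is, $\varepsilon_{H}\circ m_{H}=\varepsilon_{H}\otimes\varepsilon_{H}$), the naturality of $c$ and $c_{K,H^{\ast}}=id_{H^{\ast}}$, one obtains $(H^{\ast}\otimes\varepsilon_{H})\circ\alpha_{H}=e\circ\varepsilon_{H}$.

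The main step is then to post-compose one of the equalities in condition (iv) of Definition \ref{PHopf algebra}, namely $(H^{\ast}\otimes b_{H}(K)\otimes H)\circ(\beta_{H}\otimes\alpha_{H})\circ\delta_{H}=\varepsilon_{H}\otimes a_{H}(K)$, with the morphism $H^{\ast}\otimes\varepsilon_{H}\colon H^{\ast}\otimes H\rightarrow H^{\ast}$. On the right-hand side this collapses to $e\circ\varepsilon_{H}$. On the left-hand side, $H^{\ast}\otimes\varepsilon_{H}$ turns $\alpha_{H}$ into $(H^{\ast}\otimes\varepsilon_{H})\circ\alpha_{H}=e\circ\varepsilon_{H}$ by the second auxiliary fact; the counit law $(H\otimes\varepsilon_{H})\circ\delta_{H}=id_{H}$ then absorbs the resulting $\varepsilon_{H}$, and the identity $b_{H}(K)\circ(H\otimes e)=\varepsilon_{H}$ performs the remaining evaluation, so that the left-hand side reduces exactly to $(H^{\ast}\otimes\varepsilon_{H})\circ\beta_{H}$. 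Hence $(H^{\ast}\otimes\varepsilon_{H})\circ\beta_{H}=e\circ\varepsilon_{H}=(H^{\ast}\otimes\varepsilon_{H})\circ\alpha_{H}$.

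To conclude, I would unwind $\varepsilon_{H}\circ\widetilde{\beta}_{H}$: since $\varepsilon_{H}\circ(b_{H}(K)\otimes H)=b_{H}(K)\otimes\varepsilon_{H}$, we get $\varepsilon_{H}\circ\widetilde{\beta}_{H}=b_{H}(K)\circ(H\otimes((H^{\ast}\otimes\varepsilon_{H})\circ\beta_{H}))$; replacing $(H^{\ast}\otimes\varepsilon_{H})\circ\beta_{H}$ by $(H^{\ast}\otimes\varepsilon_{H})\circ\alpha_{H}$ this becomes $\varepsilon_{H}\circ\bigl((b_{H}(K)\otimes H)\circ(H\otimes\alpha_{H})\bigr)=\varepsilon_{H}\circ m_{H}\circ c_{H,H}^{-1}$ by \eqref{eq1} of Lemma \ref{Propiedades1PHA}, and finally $\varepsilon_{H}\circ m_{H}\circ c_{H,H}^{-1}=(\varepsilon_{H}\otimes\varepsilon_{H})\circ c_{H,H}^{-1}=\varepsilon_{H}\otimes\varepsilon_{H}$ by (i.2) of Definition \ref{PHopf algebra} and the naturality of $c$. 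I do not expect a genuine obstacle: the argument is essentially a diagram chase, and the only points requiring care are keeping track of which tensor factor the braiding occurring in $\alpha_{H}$ acts on, and applying the naturality of $b_{H}$ (and of $c$) with respect to $\varepsilon_{H}$ on the correct slot.
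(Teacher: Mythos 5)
Your argument is correct and uses the same ingredients as the paper's proof: the $\beta_{H}\otimes\alpha_{H}$ half of condition (iv) of Definition \ref{PHopf algebra}, the identity $\varepsilon_{H}\circ\widetilde{\alpha}_{H}=\varepsilon_{H}\otimes\varepsilon_{H}$ coming from \eqref{eq1} and (i.2), and the adjunction triangle identities. The only difference is organizational — you first isolate the intermediate identity $(H^{\ast}\otimes\varepsilon_{H})\circ\beta_{H}=(H^{\ast}\otimes\varepsilon_{H})\circ\alpha_{H}$ and then substitute, whereas the paper inserts $\delta_{H}$ directly into $\varepsilon_{H}\circ\widetilde{\beta}_{H}$ and collapses via (iv) — so this is essentially the same computation.
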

\begin{proof}
	\begin{align*}
		&\varepsilon_{H}\circ\widetilde{\beta}_{H}\\=& ((\varepsilon_{H}\circ\widetilde{\beta}_{H})\otimes\varepsilon_{H})\circ(H\otimes\delta_{H})\;\footnotesize\textnormal{(by counit properties)}\\=& ( b_{H}(K)\otimes\varepsilon_{H}\otimes\varepsilon_{H})\circ(H\otimes\beta_{H}\otimes H)\circ(H\otimes\delta_{H})\;\footnotesize\textnormal{(by definition of $\widetilde{\beta}_{H}$)}\\=& ( b_{H}(K)\otimes(\varepsilon_{H}\circ\widetilde{\alpha}_{H}))\circ(H\otimes\beta_{H}\otimes H)\circ(H\otimes\delta_{H})\;\footnotesize\textnormal{(by the condition of coalgebra morphism for $\widetilde{\alpha}_{H}$)}\\=& ( b_{H}(K)\otimes\varepsilon_{H})\circ(H\otimes((H^{\ast}\otimes  b_{H}(K)\otimes H)\circ(\beta_{H}\otimes\alpha_{H})\circ\delta_{H}))\;\footnotesize\textnormal{(by definition of $\widetilde{\alpha}_{H}$)}\\=& (( b_{H}(K)\otimes\varepsilon_{H})\circ(H\otimes  a_{H}(K)))\otimes\varepsilon_{H}\;\footnotesize\textnormal{(by (iv) of Definition \ref{PHopf algebra})}\\=& \varepsilon_{H}\otimes\varepsilon_{H}\;\footnotesize\textnormal{(by the adjunction properties)}.\qedhere
	\end{align*}
\end{proof}
Let $(H,m_{H})$ be a post-Hopf algebra and suppose that $\widetilde{\beta}_{H}$ satisfies that
\begin{equation}\label{property for betatilde2}
	\delta_{H}\circ\widetilde{\beta}_{H}=(\widetilde{\beta}_{H}\otimes\widetilde{\beta}_{H})\circ(H\otimes c_{H,H}\otimes H)\circ(\delta_{H}\otimes\delta_{H}).
\end{equation}
Note that if \eqref{property for betatilde2} holds, then $\widetilde{\beta}_{H}$ is a coalgebra morphism by the previous lemma.
\begin{lemma}\label{Lema prop widetildelambda}
	Let $(H,m_{H})$ be a cocommutative post-Hopf algebra in $\sf{C}$, then
		\begin{equation}\label{prop lambdatr3}
		\varepsilon_{H}\circ\widehat{\lambda}_{H}=\varepsilon_{H}.
	\end{equation}
	 In addition, if the identity \eqref{property for betatilde2} holds, then 	
	\begin{equation}\label{prop lambdatr2}
		\delta_{H}\circ\widehat{\lambda}_{H}=(\widehat{\lambda}_{H}\otimes\widehat{\lambda}_{H})\circ\delta_{H},
	\end{equation}
	i.e. $\widehat{\lambda}_{H}$ is a coalgebra morphism,
	\begin{equation}\label{prop lambdatr4}
		\widehat{\lambda}_{H}\circ\widehat{\lambda}_{H}=id_{H},
	\end{equation}
	and
	\begin{equation}\label{other conv}\widehat{\lambda}_{H}\hatast id_{H}=\varepsilon_{H}\otimes\eta_{H}.\end{equation}
\end{lemma}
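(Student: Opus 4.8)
The plan is to prove the identities in the order \eqref{prop lambdatr3}, \eqref{prop lambdatr2}, \eqref{other conv}, \eqref{prop lambdatr4}, using throughout that \eqref{lambdatr cocom} and the definition of $\widetilde{\beta}_{H}$ yield the factorization $\widehat{\lambda}_{H}=\widetilde{\beta}_{H}\circ(\lambda_{H}\otimes H)\circ\delta_{H}$. For \eqref{prop lambdatr3}, composing this with $\varepsilon_{H}$ and using $\varepsilon_{H}\circ\widetilde{\beta}_{H}=\varepsilon_{H}\otimes\varepsilon_{H}$ from \eqref{property for betatilde1} (which does not require \eqref{property for betatilde2}), then $\varepsilon_{H}\circ\lambda_{H}=\varepsilon_{H}$ from \eqref{u-antip} and the counit property, gives $\varepsilon_{H}$. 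Assuming now \eqref{property for betatilde2}, I would obtain \eqref{prop lambdatr2} by noting that $\widehat{\lambda}_{H}$ is a composite of coalgebra morphisms: $\widetilde{\beta}_{H}$ is one by the remark preceding the lemma, $\lambda_{H}$ is one because $H$ is cocommutative, and $\delta_{H}\colon H\to H\otimes H$ is one by coassociativity and cocommutativity of $\delta_{H}$ (moving the braiding through the iterated coproduct via \eqref{ccb}); the counit part of \eqref{prop lambdatr2} re-proves \eqref{prop lambdatr3}.

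The substantive step is \eqref{other conv}, $\widehat{\lambda}_{H}\hatast id_{H}=\varepsilon_{H}\otimes\eta_{H}$, which I would establish by a direct computation parallel to the proof of \eqref{prop lambdatr1} and \eqref{convolution1} in Lemma \ref{relation lambdaH y lambdaHhat} but feeding in the \emph{other} half of condition (iv) of Definition \ref{PHopf algebra}. Concretely: expand $\widehat{\lambda}_{H}\hatast id_{H}=\mu_{H}\circ(H\otimes m_{H})\circ(\delta_{H}\otimes H)\circ(\widehat{\lambda}_{H}\otimes H)\circ\delta_{H}$ from the definition of $\widehat{\mu}_{H}$; use \eqref{prop lambdatr2} to push $\delta_{H}$ past $\widehat{\lambda}_{H}$; rewrite the remaining occurrence of $\widehat{\lambda}_{H}$ by \eqref{lambdatr cocom} and of $m_{H}$ by \eqref{eq2}; and then, using only naturality of $c$ together with coassociativity and cocommutativity of $\delta_{H}$, reorganize the expression so that its inner block becomes exactly $(H^{\ast}\otimes b_{H}(K)\otimes H)\circ(\alpha_{H}\otimes\beta_{H})\circ\delta_{H}=\varepsilon_{H}\otimes a_{H}(K)$. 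Substituting this, and finishing with \eqref{antipode} for $H$, the adjunction triangle identities for $a_{H}$ and $b_{H}$, and the (co)unit properties, leaves $\varepsilon_{H}\otimes\eta_{H}$. The hard part will be precisely this bookkeeping: the coproducts, braidings and the factor $\lambda_{H}$ must be threaded so that it is the $\alpha_{H}\otimes\beta_{H}$ identity — not the $\beta_{H}\otimes\alpha_{H}$ identity used for \eqref{convolution1} — that surfaces, and this is where cocommutativity and \eqref{prop lambdatr2} are genuinely used.

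Finally, \eqref{prop lambdatr4} follows formally. By Theorem \ref{Th bialg subadj}, $\widehat{H}$ is a bialgebra whose underlying coalgebra is that of $H$, hence cocommutative. Combining \eqref{convolution1} from Lemma \ref{relation lambdaH y lambdaHhat} with \eqref{other conv}, the morphism $\widehat{\lambda}_{H}$ is the two-sided convolution inverse of $id_{H}$ in $\mathcal{H}(H,\widehat{H})$; hence $\widehat{H}$ is a cocommutative Hopf algebra with antipode $\widehat{\lambda}_{H}$, and therefore $\widehat{\lambda}_{H}\circ\widehat{\lambda}_{H}=id_{H}$ by the property of antipodes of cocommutative Hopf algebras recalled in the Preliminaries.
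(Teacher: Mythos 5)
Your treatment of \eqref{prop lambdatr3} matches the paper's, and your argument for \eqref{prop lambdatr2} (that $\widehat{\lambda}_{H}=\widetilde{\beta}_{H}\circ(\lambda_{H}\otimes H)\circ\delta_{H}$ is a composite of coalgebra morphisms, using that $\delta_{H}$ and $\lambda_{H}$ are coalgebra morphisms under cocommutativity and that $\widetilde{\beta}_{H}$ is one by \eqref{property for betatilde1} and \eqref{property for betatilde2}) is correct and in fact cleaner than the paper's direct computation. The problem is the step you yourself flag as the substantive one: the direct computation of \eqref{other conv}. As described, it does not work. After you push $\delta_{H}$ past the outer $\widehat{\lambda}_{H}$ via \eqref{prop lambdatr2} and substitute \eqref{eq2}, the inner block is $(b_{H}(K)\otimes H)\circ(H\otimes(\alpha_{H}\circ\widehat{\lambda}_{H}))\circ c_{H,H}$, i.e.\ $\alpha_{H}$ is evaluated at $\widehat{\lambda}_{H}$ of a Sweedler leg of $h$, while the $\beta_{H}$ produced by \eqref{lambdatr cocom} sits inside the \emph{other} copy of $\widehat{\lambda}_{H}$ and is evaluated at a different leg; moreover the evaluation $b_{H}(K)$ inside each copy of $\widehat{\lambda}_{H}$ already pairs its $\beta_{H}$ against $\lambda_{H}$ of yet another leg. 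No amount of naturality of $c$, coassociativity and cocommutativity will reassemble this into $(H^{\ast}\otimes b_{H}(K)\otimes H)\circ(\alpha_{H}\otimes\beta_{H})\circ\delta_{H}$, because that identity requires $\alpha_{H}$ and $\beta_{H}$ to be applied to two adjacent legs of a single coproduct and paired with each other through the middle $b_{H}(K)$ --- a configuration your expression never attains. A genuine direct proof of \eqref{other conv} can be pushed through, but only with further inputs (essentially the identity $\widetilde{\beta}_{H}(a\otimes h)=m_{H}(\widehat{\lambda}_{H}(h)\otimes a)$, which itself uses \eqref{convolution1} together with the module properties (ii) and (iii) of Definition \ref{PHopf algebra}); it is not the bookkeeping exercise you describe.

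Since \eqref{prop lambdatr4} in your plan is deduced from \eqref{other conv}, the gap propagates to it as well. The repair is to reverse your order of deduction, which is what the paper does: \eqref{prop lambdatr4} follows from the already-available \emph{left} inverse identity \eqref{convolution1} alone, by the standard convolution sandwich. Writing $e=\eta_{H}\circ\varepsilon_{H}$, one has $\widehat{\lambda}_{H}\circ\widehat{\lambda}_{H}=\widehat{\mu}_{H}\circ((id_{H}\hatast\widehat{\lambda}_{H})\otimes(\widehat{\lambda}_{H}\circ\widehat{\lambda}_{H}))\circ\delta_{H}$, and regrouping by coassociativity, associativity of $\widehat{\mu}_{H}$ and \eqref{prop lambdatr2} turns the inner factor into $(id_{H}\hatast\widehat{\lambda}_{H})\circ\widehat{\lambda}_{H}=e\circ\widehat{\lambda}_{H}=e$ by \eqref{prop lambdatr3}, leaving $id_{H}$. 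Then \eqref{other conv} is immediate: $\widehat{\lambda}_{H}\hatast id_{H}=\widehat{\lambda}_{H}\hatast(\widehat{\lambda}_{H}\circ\widehat{\lambda}_{H})=(id_{H}\hatast\widehat{\lambda}_{H})\circ\widehat{\lambda}_{H}=\varepsilon_{H}\otimes\eta_{H}$, using \eqref{prop lambdatr2} again. No new computation with $\alpha_{H}$ and $\beta_{H}$ is needed at this stage.
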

\begin{proof}
	First of all, note that it is straightforward to prove that $\varepsilon_{H}\circ \widehat{\lambda}_{H}=\varepsilon_{H}$ using \eqref{lambdatr cocom}, \eqref{property for betatilde1}, \eqref{u-antip} and counit property. Moreover
\begin{align*}
	&\delta_{H}\circ\widehat{\lambda}_{H}\\=& \delta_{H}\circ\widetilde{\beta}_{H}\circ(\lambda_{H}\otimes H)\circ\delta_{H}\;\footnotesize\textnormal{(by \eqref{lambdatr cocom} and definition of $\widetilde{\beta}_{H}$)}
	\\=& (\widetilde{\beta}_{H}\otimes\widetilde{\beta}_{H})\circ(H\otimes c_{H,H}\otimes H)\circ(\delta_{H}\otimes\delta_{H})\circ (\lambda_{H}\otimes H)\circ\delta_{H}\;\footnotesize\textnormal{(by \eqref{property for betatilde2})}\\=&(\widetilde{\beta}_{H}\otimes\widetilde{\beta}_{H})\circ(H\otimes c_{H,H}\otimes H)\circ(((\lambda_{H}\otimes\lambda_{H})\circ\delta_{H})\otimes\delta_{H})\circ\delta_{H}\;\footnotesize\textnormal{(by \eqref{a-antip} and cocommutativity of $\delta_{H}$)}\\=&((\widetilde{\beta}_{H}\circ(\lambda_{H}\otimes H))\otimes(\widetilde{\beta}_{H}\circ(\lambda_{H}\otimes H)))\circ(H\otimes(c_{H,H}\circ\delta_{H})\otimes H)\circ(H\otimes\delta_{H})\circ\delta_{H}\;\footnotesize\textnormal{(by naturality of $c$ and}\\&\footnotesize\textnormal{coassociativity of $\delta_{H}$)}\\=&((\widetilde{\beta}_{H}\circ(\lambda_{H}\otimes H)\circ\delta_{H})\otimes(\widetilde{\beta}_{H}\circ(\lambda_{H}\otimes H)\circ\delta_{H}))\circ\delta_{H}\;\footnotesize\textnormal{(by cocommutativity and coassociativity of $\delta_{H}$)}\\=&(\widehat{\lambda}_{H}\otimes\widehat{\lambda}_{H})\circ\delta_{H}\;\footnotesize\textnormal{(by \eqref{lambdatr cocom} and definition of $\widetilde{\beta}_{H}$)}.
\end{align*}

As a consequence, we can prove that $\widehat{\lambda}_{H}\circ\widehat{\lambda}_{H}=id_{H}$. Indeed,
	\begin{align*}
		&\widehat{\lambda}_{H}\circ\widehat{\lambda}_{H}\\=& \widehat{\mu}_{H}\circ((\eta_{H}\circ\varepsilon_{H})\otimes(\widehat{\lambda}_{H}\circ\widehat{\lambda}_{H}))\circ\delta_{H}\;\footnotesize\textnormal{(by (co)unit properties)}\\=& \widehat{\mu}_{H}\circ((id_{H}\hatast\widehat{\lambda}_{H})\otimes(\widehat{\lambda}_{H}\circ\widehat{\lambda}_{H}))\circ\delta_{H}\;\footnotesize\textnormal{(by \eqref{convolution1})}\\=& \widehat{\mu}_{H}\circ(H\otimes (\widehat{\mu}_{H}\circ(\widehat{\lambda}_{H}\otimes(\widehat{\lambda}_{H}\circ\widehat{\lambda}_{H}))\circ\delta_{H}))\circ\delta_{H}\;\footnotesize\textnormal{(by coassociativity of $\delta_{H}$ and associativity of $\widehat{\mu}_{H}$)}\\=& \widehat{\mu}_{H}\circ(H\otimes((id_{H}\hatast\widehat{\lambda}_{H})\circ\widehat{\lambda}_{H}))\circ\delta_{H}\;\footnotesize\textnormal{(by \eqref{prop lambdatr2})}\\=& \widehat{\mu}_{H}\circ(H\otimes(\eta_{H}\circ\varepsilon_{H}\circ\widehat{\lambda}_{H}))\circ\delta_{H}\;\footnotesize\textnormal{(by \eqref{convolution1})}\\=& \widehat{\mu}_{H}\circ(H\otimes(\eta_{H}\circ\varepsilon_{H}))\circ\delta_{H}\;\footnotesize\textnormal{(by \eqref{prop lambdatr3})}\\=& id_{H}\;\footnotesize\textnormal{(by (co)unit properties)}.
	\end{align*}
	
To finish, we will see that $\widehat{\lambda}_{H}\hatast id_{H}=\varepsilon_{H}\otimes\eta_{H}$. Indeed,
	\begin{align*}
		&\widehat{\lambda}_{H}\hatast id_{H}\\=& \widehat{\mu}_{H}\circ(\widehat{\lambda}_{H}\otimes H)\circ\delta_{H}\;\footnotesize\textnormal{(by definition of $\hatast$)}\\=&  \widehat{\mu}_{H}\circ(\widehat{\lambda}_{H}\otimes (\widehat{\lambda}_{H}\circ\widehat{\lambda}_{H}))\circ\delta_{H}\;\footnotesize\textnormal{(by \eqref{prop lambdatr4})}\\=& (id_{H}\hatast\widehat{\lambda}_{H})\circ\widehat{\lambda}_{H}\;\footnotesize\textnormal{(by \eqref{prop lambdatr2})}\\=& \eta_{H}\circ\varepsilon_{H}\circ\widehat{\lambda}_{H}\;\footnotesize\textnormal{(by \eqref{convolution1})}\\=& \varepsilon_{H}\otimes\eta_{H}\;\footnotesize\textnormal{(by \eqref{prop lambdatr3})}.\qedhere
	\end{align*}
\end{proof}
\begin{theorem}\label{HHatHA}
Let $(H,m_{H})$ be a cocommutative post-Hopf algebra in $\sf{C}$. If the identity \eqref{property for betatilde2} holds, then $\widehat{H}=(H,\eta_{H},\widehat{\mu}_{H},\varepsilon_{H},\delta_{H},\widehat{\lambda}_{H})$ is a cocommutative Hopf algebra in $\sf{C}$. This particular Hopf algebra structure is called the subadjacent Hopf algebra of $(H,m_{H})$.
\end{theorem}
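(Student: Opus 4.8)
The plan is simply to assemble the pieces already established in the preceding results. First I would observe that Theorem \ref{Th bialg subadj} gives that $\widehat{H}=(H,\eta_{H},\widehat{\mu}_{H},\varepsilon_{H},\delta_{H})$ is a bialgebra in $\sf{C}$; in particular its underlying coalgebra is literally $(H,\varepsilon_{H},\delta_{H})$, so nothing about the coalgebra structure, nor about the compatibility of $\widehat{\mu}_{H}$ with $\varepsilon_{H}$ and $\delta_{H}$, remains to be checked. Since $(H,m_{H})$ is a \emph{cocommutative} post-Hopf algebra, $H$ is a cocommutative Hopf algebra, i.e. $c_{H,H}\circ\delta_{H}=\delta_{H}$, and because $\widehat{H}$ carries the same coproduct $\delta_{H}$, the bialgebra $\widehat{H}$ is automatically cocommutative.

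Next I would verify that $\widehat{\lambda}_{H}$ is an antipode for $\widehat{H}$, which by definition amounts to exhibiting it as the two-sided inverse of $id_{H}$ in $\mathcal{H}(H,\widehat{H})$ with respect to the convolution $\hatast$. The identity $id_{H}\hatast\widehat{\lambda}_{H}=\varepsilon_{H}\otimes\eta_{H}$ is exactly \eqref{convolution1} from Lemma \ref{relation lambdaH y lambdaHhat}, and the identity $\widehat{\lambda}_{H}\hatast id_{H}=\varepsilon_{H}\otimes\eta_{H}$ is exactly \eqref{other conv} from Lemma \ref{Lema prop widetildelambda}; the latter is where the hypothesis \eqref{property for betatilde2} is used, through the coalgebra-morphism property of $\widetilde{\beta}_{H}$ and hence of $\widehat{\lambda}_{H}$. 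Combining the two identities shows that $\widehat{\lambda}_{H}$ satisfies \eqref{antipode} for $\widehat{H}$, so $\widehat{H}=(H,\eta_{H},\widehat{\mu}_{H},\varepsilon_{H},\delta_{H},\widehat{\lambda}_{H})$ is a Hopf algebra, and by the first paragraph it is cocommutative.

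Since all the technical computations have been front-loaded into Theorem \ref{Th bialg subadj} and Lemmas \ref{relation lambdaH y lambdaHhat} and \ref{Lema prop widetildelambda}, there is essentially no obstacle left in the proof of the theorem itself. The only point I would stress is that the assumption \eqref{property for betatilde2} is genuinely necessary: without it one obtains only the one-sided convolution identity \eqref{convolution1}, which does not suffice to conclude that $id_{H}$ is invertible for $\hatast$, so $\widehat{\lambda}_{H}$ would in general fail to be an antipode.
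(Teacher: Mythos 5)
Your proof is correct and follows exactly the paper's own argument: the theorem is obtained by combining Theorem \ref{Th bialg subadj} with the convolution identities \eqref{convolution1} and \eqref{other conv}. The additional remarks on cocommutativity of $\widehat{H}$ and on where \eqref{property for betatilde2} enters are accurate but do not change the route.
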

\begin{proof}
It is a direct consequence of Theorem \ref{Th bialg subadj} and equalities \eqref{convolution1} and \eqref{other conv}.
\end{proof}
So, we have deduced that it is possible to obtain from any cocommutative post-Hopf algebra satisfying \eqref{property for betatilde2} another Hopf algebra structure whose underlying coalgebra is the same as that of $H$. Therefore, at this point it is natural to wonder whether $\widehat{\mathbb{H}}=(H,\widehat{H})$ is a Hopf brace in $\sf{C}$. The following theorem solves this question.
\begin{theorem}\label{Hhat brace}
	Let $(H,m_{H})$ be a cocommutative post-Hopf algebra in $\sf{C}$. If the identity \eqref{property for betatilde2} is satisfied, then
	\[\widehat{\mathbb{H}}=(H,\widehat{H})\]
	is a cocommutative Hopf brace in $\sf{C}$.
\end{theorem}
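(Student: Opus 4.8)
The plan is to set $H_{1}:=H$, the given cocommutative Hopf algebra, and $H_{2}:=\widehat{H}$, the subadjacent Hopf algebra furnished by Theorem \ref{HHatHA}, and then to check the three conditions of Definition \ref{H-brace}. Condition (i) holds by hypothesis and condition (ii) is exactly the statement of Theorem \ref{HHatHA}, which is where \eqref{property for betatilde2} enters. Since $\widehat{H}$ has the same underlying coalgebra $(H,\varepsilon_{H},\delta_{H})$ as $H$, which is cocommutative, the resulting Hopf brace will automatically be cocommutative; thus the genuine content lies in verifying the compatibility condition (iii).

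The crucial preliminary step --- the analogue of \eqref{Gbt=gamma} in the brace triple setting --- is the identification $\Gamma_{H_{1}}=m_{H}$. To prove it I would unfold $\Gamma_{H_{1}}=\mu_{H}\circ(\lambda_{H}\otimes\widehat{\mu}_{H})\circ(\delta_{H}\otimes H)$, substitute the definition $\widehat{\mu}_{H}=\mu_{H}\circ(H\otimes m_{H})\circ(\delta_{H}\otimes H)$, and then use associativity of $\mu_{H}$ and coassociativity of $\delta_{H}$ to re-bracket the expression as $\mu_{H}\circ((\lambda_{H}\ast id_{H})\otimes m_{H})\circ(\delta_{H}\otimes H)$; applying \eqref{antipode} and the (co)unit properties collapses $\lambda_{H}\ast id_{H}$ to $\eta_{H}\circ\varepsilon_{H}$ and leaves $\Gamma_{H_{1}}=m_{H}$. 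In passing, note that $\widehat{H}$ shares the unit $\eta_{H}$ with $H$ and, by \eqref{eq4}, $m_{H}\circ(\eta_{H}\otimes H)=id_{H}$, consistently with the general fact $\eta_{H}^{1}=\eta_{H}^{2}$ recorded after Definition \ref{H-brace}.

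With $\Gamma_{H_{1}}=m_{H}$ established, condition (iii) of Definition \ref{H-brace} takes the form
\[\widehat{\mu}_{H}\circ(H\otimes\mu_{H})=\mu_{H}\circ(\widehat{\mu}_{H}\otimes m_{H})\circ(H\otimes c_{H,H}\otimes H)\circ(\delta_{H}\otimes H\otimes H).\]
I would prove this by a direct computation beginning from the left-hand side: expand $\widehat{\mu}_{H}$ by its definition, rewrite $m_{H}\circ(H\otimes\mu_{H})$ via condition (iii) of Definition \ref{PHopf algebra}, and then use coassociativity of $\delta_{H}$, associativity of $\mu_{H}$ and naturality of $c$ to re-bracket the result so that its first tensor factor is recognised again as $\widehat{\mu}_{H}$ while the second remains $m_{H}=\Gamma_{H_{1}}$. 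This is essentially the same manipulation performed in the associativity part of the proof of Theorem \ref{Th bialg subadj} and in the compatibility check of Theorem \ref{Prop Brace TB}, so cocommutativity of $\delta_{H}$ and, when convenient, \eqref{ccb} may be used freely.

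I do not expect a genuine obstacle; the argument is a reshuffling of the post-Hopf axioms. The step requiring the most care is the braiding bookkeeping in the last computation: the instance of $c_{H,H}$ produced when one applies condition (iii) of Definition \ref{PHopf algebra} must be transported, via naturality of $c$, to the precise position occupied by $c_{H,H}$ in condition (iii) of Definition \ref{H-brace}, and it is exactly the cocommutative hypothesis (through \eqref{ccb}) that makes these manipulations go through cleanly.
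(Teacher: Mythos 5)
Your proposal is correct and follows essentially the same route as the paper: both reduce everything to the identification $\widehat{\Gamma}_{H}=m_{H}$ (proved exactly as you describe) and then verify condition (iii) of Definition \ref{H-brace} by re-bracketing via associativity of $\mu_{H}$ and coassociativity of $\delta_{H}$ and invoking condition (iii) of Definition \ref{PHopf algebra}, with conditions (i) and (ii) supplied by Theorem \ref{HHatHA}. One minor correction: the compatibility computation itself does not actually need cocommutativity or \eqref{ccb} — those enter only through Theorem \ref{HHatHA} — so the ``braiding bookkeeping'' you flag as the delicate point is in fact handled by coassociativity alone.
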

\begin{proof} By Theorem \ref{HHatHA}, to prove that $\widehat{\mathbb{H}}=(H,\widehat{H})$ is a Hopf brace we only have to show that (iii) of Definition \ref{H-brace} holds. Note that
\begin{equation}\label{Ghat=mH}
\widehat{\Gamma}_{H}=m_{H}.
\end{equation}
Indeed,
	\begin{align*}
		&\widehat{\Gamma}_{H}\\=&\mu_{H}\circ(\lambda_{H}\otimes\widehat{\mu}_{H})\circ(\delta_{H}\otimes H)\;\footnotesize\textnormal{(by definition of $\widehat{\Gamma}_{H}$)}\\=& \mu_{H}\circ(\lambda_{H}\otimes(\mu_{H}\circ(H\otimes m_{H} )\circ(\delta_{H}\otimes H)))\circ(\delta_{H}\otimes H)\;\footnotesize\textnormal{(by definition of $\widehat{\mu}_{H}$)}\\=& \mu_{H}\circ((\lambda_{H}\ast id_{H})\otimes m_{H} )\circ(\delta_{H}\otimes H)\;\footnotesize\textnormal{(by coassociativity of $\delta_{H}$ and associativity of $\mu_{H}$)}\\=& \mu_{H}\circ((\eta_{H}\circ\varepsilon_{H})\otimes m_{H} )\circ(\delta_{H}\otimes H)\;\footnotesize\textnormal{(by \eqref{antipode})}\\=&  m_{H} \;\footnotesize\textnormal{(by (co)unit property)}.
	\end{align*}
	So, we obtain the following:
	\begin{align*}
		&\mu_{H}\circ(\widehat{\mu}_{H}\otimes\widehat{\Gamma}_{H})\circ(H\otimes c_{H,H}\otimes H)\circ(\delta_{H}\otimes H\otimes H)\\=& \mu_{H}\circ((\mu_{H}\circ(H\otimes m_{H} )\circ(\delta_{H}\otimes H))\otimes m_{H} )\circ(H\otimes c_{H,H}\otimes H)\circ(\delta_{H}\otimes H\otimes H)\\&\footnotesize\textnormal{(by definition of $\widehat{\mu}_{H}$ and \eqref{Ghat=mH})}\\=& \mu_{H}\circ(H\otimes(\mu_{H}\circ( m_{H} \otimes m_{H} )\circ(H\otimes c_{H,H}\otimes H)\circ(\delta_{H}\otimes H\otimes H)))\circ(\delta_{H}\otimes H\otimes H)\\&\footnotesize\textnormal{(by associativity of $\mu_{H}$ and coassociativity of $\delta_{H}$)}\\=& \mu_{H}\circ(H\otimes m_{H} )\circ(\delta_{H}\otimes \mu_{H})\;\footnotesize\textnormal{(by (iii) of Definition \ref{PHopf algebra})}\\=& \widehat{\mu}_{H}\circ(H\otimes \mu_{H})\;\footnotesize\textnormal{(by definition of $\widehat{\mu}_{H}$)}.\qedhere
	\end{align*}
\end{proof}
It is possible to interpret the previous result in the following sense: If $\sf{cocPost}\textnormal{-}\sf{Hopf}^{\star}$ denotes the full subcategory of cocommutative post-Hopf algebras such that \eqref{property for betatilde2} holds, and $\sf{cocHBr^{f}}$ denotes the category of finite cocommutative Hopf braces, then a functor
$Q\colon \sf{cocPost}\textnormal{-}\sf{Hopf}^{\star}\longrightarrow \sf{cocHBr^{f}}$ exists which
acts on objects by $Q((H,m_{H}))=\widehat{\mathbb{H}}$ and on morphisms by the identity. To see that $Q$ is well-defined on morphisms, we have to prove that if $f\colon (H,m_{H})\rightarrow (B,m_{B})$ is a morphism in $\sf{cocPost}\textnormal{-}\sf{Hopf}$, then $f$ is a morphism of Hopf braces between $\widehat{\mathbb{H}}$ and $\widehat{\mathbb{B}}$. Indeed:
\begin{align*}
&f\circ\widehat{\mu}_{H}\\=& f\circ\mu_{H}\circ(H\otimes m_{H})\circ(\delta_{H}\otimes H)\;\footnotesize\textnormal{(by definition of $\widehat{\mu}_{H}$)}\\=& \mu_{B}\circ(f\otimes f)\circ(H\otimes m_{H})\circ(\delta_{H}\otimes H)\;\footnotesize\textnormal{(by the condition of algebra morphism for $f\colon H\rightarrow B$)}\\=& \mu_{B}\circ(B\otimes m_{B})\circ(((f\otimes f)\circ\delta_{H})\otimes f)\;\footnotesize\textnormal{(by \eqref{CondMorPHopf})}\\=& \mu_{B}\circ(B\otimes m_{B})\circ(\delta_{B}\otimes B)\circ (f\otimes f)\;\footnotesize\textnormal{(by the condition of coalgebra morphism for $f$)}\\=& \widehat{\mu}_{B}\circ(f\otimes f)\;\footnotesize\textnormal{(by definition of $\widehat{\mu}_{B}$)}.
\end{align*}
The following theorem is the main result of this section. 
\begin{theorem}\label{iso2}
The categories $\sf{cocPost}\textnormal{-}\sf{Hopf}^{\star}$ and $\sf{cocBT^{f}}$ are isomorphic.
\end{theorem}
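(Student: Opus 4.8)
The plan is to produce explicit mutually inverse functors between the two categories, namely $P'$ and $R:=G''\circ Q$, where $P'$ is the restriction to $\sf{cocBT^{f}}$ of the functor $P$ of Theorem \ref{inversebetaBT}, $Q$ is the functor of Theorem \ref{Hhat brace}, and $G''$ is the restriction to finite objects of the functor $G'$ of Corollary \ref{Cor iso cocHB cocTB}. On objects, $R$ sends a cocommutative post-Hopf algebra $(H,m_{H})$ satisfying \eqref{property for betatilde2} to $G''(\widehat{\mathbb H})=(H,\widehat{\Gamma}_{H},\widehat{\lambda}_{H})=(H,m_{H},\widehat{\lambda}_{H})$, using \eqref{Ghat=mH}, and on morphisms it acts as the identity (as do $P'$, $Q$ and $G''$).

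Before this, I would check that $P'$ is well defined, i.e. that for a cocommutative finite brace triple $(H,\gamma_{H},T_{H})$ the post-Hopf algebra $P((H,\gamma_{H},T_{H}))=(H,\gamma_{H})$ indeed lies in $\sf{cocPost}\textnormal{-}\sf{Hopf}^{\star}$. Cocommutativity is inherited from $H$, so the only nontrivial point is condition \eqref{property for betatilde2}. By Theorem \ref{inversebetaBT} the convolution inverse of $\alpha_{H}$ is $\beta_{H}=\alpha_{H}\circ T_{H}^{-1}$, whence $\widetilde{\beta}_{H}=\widetilde{\alpha}_{H}\circ(H\otimes T_{H}^{-1})$. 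By Lemma \ref{tildealpha de coalgebras} the morphism $\widetilde{\alpha}_{H}$ is a coalgebra morphism, and by (vi.1) of Definition \ref{BTdef} together with cocommutativity (see Remark \ref{TB-cocTB}) the morphism $T_{H}$, hence also $T_{H}^{-1}$, is a coalgebra morphism; therefore $\widetilde{\beta}_{H}$ is a coalgebra morphism and in particular satisfies \eqref{property for betatilde2}. The functor $R=G''\circ Q$ is automatically well defined as a composite of functors already constructed.

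The computational core of the proof is the identity $Q\circ P'=F''$, where $F''$ is the restriction to $\sf{cocBT^{f}}$ of the functor $F$. For $(H,\gamma_{H},T_{H})$ in $\sf{cocBT^{f}}$ one has $(Q\circ P')((H,\gamma_{H},T_{H}))=Q((H,\gamma_{H}))=\widehat{\mathbb H}=(H,\widehat{H})$ built with $m_{H}=\gamma_{H}$, while $F''((H,\gamma_{H},T_{H}))=\mathbb{H}_{\sf{BT}}=(H,H_{\sf{BT}})$. Since $m_{H}=\gamma_{H}$, the products $\widehat{\mu}_{H}=\mu_{H}\circ(H\otimes m_{H})\circ(\delta_{H}\otimes H)$ and $\mu_{H}^{\scalebox{0.6}{\sf{BT}}}$ coincide, so $\widehat{H}$ and $H_{\sf{BT}}$ are the same bialgebra; by Theorems \ref{Prop Brace TB} and \ref{HHatHA} both $T_{H}$ and $\widehat{\lambda}_{H}$ are convolution inverses of $id_{H}$ in $\mathcal{H}(H,H_{\sf{BT}})=\mathcal{H}(H,\widehat{H})$, so uniqueness of the antipode forces $\widehat{\lambda}_{H}=T_{H}$ and hence $\widehat{H}=H_{\sf{BT}}$ as Hopf algebras. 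As both functors act as the identity on morphisms, $Q\circ P'=F''$.

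From here the two composites follow formally. First, $R\circ P'=G''\circ(Q\circ P')=G''\circ F''=\sf{id}_{\sf{cocBT^{f}}}$, because $F''$ and $G''$ are the restrictions to the finiteness-preserving full subcategories of finite objects of the mutually inverse functors of Corollary \ref{Cor iso cocHB cocTB}. Second, for $(H,m_{H})$ in $\sf{cocPost}\textnormal{-}\sf{Hopf}^{\star}$ one computes $(P'\circ R)((H,m_{H}))=(P'\circ G'')((H,\widehat{H}))=P'((H,\widehat{\Gamma}_{H},\widehat{\lambda}_{H}))=(H,\widehat{\Gamma}_{H})=(H,m_{H})$ by \eqref{Ghat=mH}, and morphisms are again sent to themselves, so $P'\circ R=\sf{id}_{\sf{cocPost}\textnormal{-}\sf{Hopf}^{\star}}$. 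The only delicate points are the well-definedness of $P'$ (where the explicit formula $\beta_{H}=\alpha_{H}\circ T_{H}^{-1}$ and the cocommutative form of (vi.1) are essential) and the identification $\widehat{H}=H_{\sf{BT}}$; everything else is a formal consequence of the already established isomorphism $\sf{cocBT^{f}}\simeq\sf{cocHBr^{f}}$ together with the identities $\widehat{\Gamma}_{H}=m_{H}$ and $\Gamma_{H}^{\scalebox{0.6}{\sf{BT}}}=\gamma_{H}$.
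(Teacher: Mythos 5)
Your proposal is correct and follows the same overall strategy as the paper: check that $P'$ lands in $\sf{cocPost}\textnormal{-}\sf{Hopf}^{\star}$ via $\beta_{H}=\alpha_{H}\circ T_{H}^{-1}$ and the cocommutative form of (vi.1), then verify the two composite identities using \eqref{Ghat=mH} and \eqref{Gbt=gamma}. The one place where you genuinely diverge is the identification $\widehat{\lambda}_{H}=T_{H}$: the paper establishes it by an explicit convolution computation showing $id_{H}\ast_{\scalebox{0.6}{\sf{BT}}}\widehat{\lambda}_{H}=\varepsilon_{H}\otimes\eta_{H}$ from the brace-triple axioms, whereas you observe that $\widehat{\mu}_{H}=\mu_{H}^{\scalebox{0.6}{\sf{BT}}}$ makes $\widehat{H}$ and $H_{\sf{BT}}$ the same bialgebra and then invoke uniqueness of the antipode, with $T_{H}$ and $\widehat{\lambda}_{H}$ both being antipodes by Theorems \ref{Prop Brace TB} and \ref{HHatHA}; this is a legitimate and cleaner shortcut that makes the paper's computation redundant. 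Your factorization of the argument through the identity $Q\circ P'=F''$ is also a tidier packaging of the same content, and I see no gap.
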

\begin{proof}
At first, let's consider the following commutative diagram of functors:
\[\xymatrix{&\sf{cocPost}\textnormal{-}\sf{Hopf}^{\star}\ar[rr]^-{Q} & &\sf{cocHBr^{f}}\ar@<1ex>[dd]^-{G''}\\
& & &\rotatebox{90}{$\simeq$}\\
& & &\sf{cocBT^{f}},\ar[lluu]_-{P'}\ar@<1ex>[uu]^-{F''}
}\]
where $F''$ and $G''$ are the restrictions of functors $F'$ and $G'$ introduced in Corollary \ref{Cor iso cocHB cocTB} to the subcategories of finite objects, and $P'$ is the restriction of functor $P$ to $\sf{cocBT^{f}}$. 

To begin with, we are going to see that $P'$ is well-defined on objects. That is to say, we have to prove that if $(H,\gamma_{H},T_{H})$ is a finite cocommutative brace triple, then the post-Hopf algebra $(H,\gamma_{H})$ satisfies \eqref{property for betatilde2}. In this situation, by Theorem \ref{inversebetaBT} and \eqref{TH2=ID}, \begin{equation}\label{betaparticular}\beta_{H}=\alpha_{H}\circ T_{H},\end{equation}and then, \begin{equation}\label{betatildeparticular}\widetilde{\beta}_{H}=\widetilde{\alpha}_{H}\circ (H\otimes T_{H}).\end{equation} Therefore, \eqref{property for betatilde2} follows by:
\begin{align*}
&\delta_{H}\circ\widetilde{\beta}_{H}\\=&\delta_{H}\circ\widetilde{\alpha}_{H}\circ(H\otimes T_{H})\;\footnotesize\textnormal{(by $\widetilde{\beta}_{H}=\widetilde{\alpha}_{H}\circ (H\otimes T_{H})$)}\\=& (\widetilde{\alpha}_{H}\otimes \widetilde{\alpha}_{H})\circ(H\otimes c_{H,H}\otimes H)\circ(\delta_{H}\otimes(\delta_{H}\circ T_{H}))\;\footnotesize\textnormal{(by Lemma \ref{tildealpha de coalgebras})}\\=& (\widetilde{\alpha}_{H}\otimes \widetilde{\alpha}_{H})\circ(H\otimes c_{H,H}\otimes H)\circ(\delta_{H}\otimes((T_{H}\otimes T_{H})\circ\delta_{H}))\;\footnotesize\textnormal{(by (vi.1) of Definition \ref{BTdef}}\\&\footnotesize\textnormal{and cocommutativity of $\delta_{H}$)}\\=& (\widetilde{\beta}_{H}\otimes \widetilde{\beta}_{H})\circ(H\otimes c_{H,H}\otimes H)\circ(\delta_{H}\otimes\delta_{H})\;\footnotesize\textnormal{(by naturality of $c$ and \eqref{betatildeparticular})}.\end{align*}

Taking into account functors $P'$, $Q$ and $G''$, on the one hand, we have that
\begin{align*}
&(P'\circ(G'' \circ Q))((H,m_{H}))\\=&(P'\circ G'')(\widehat{\mathbb{H}})\;\footnotesize\textnormal{(by definition of $Q$)}\\=&P'((H,m_{H},\widehat{\lambda}_{H}))\;\footnotesize\textnormal{(by definition of $G''$ and \eqref{Ghat=mH})}\\=&(H,m_{H})\;\footnotesize\textnormal{(by definition of $P'$)}.
\end{align*}
So, $P'\circ(G''\circ Q)=\sf{id}_{\sf{cocPost}\textnormal{-}\sf{Hopf}^{\star}}$. On the other hand,
\begin{align*}
&((G''\circ Q)\circ P')((H,\gamma_{H},T_{H}))\\=&(G''\circ Q)((H,\gamma_{H}))\;\footnotesize\textnormal{(by definition of $P'$)}\\=&G''(\widehat{\mathbb{H}})\;\footnotesize\textnormal{(by definition of $Q$)}\\=&(H,\gamma_{H},\widehat{\lambda}_{H})\;\footnotesize\textnormal{(by definition of $G''$ and \eqref{Ghat=mH})}\\=&(H,\gamma_{H},T_{H}),
\end{align*}
where the last equality is due to the fact that $\widehat{\lambda}_{H}=T_{H}$. Indeed, firstly note that
\begin{align*}
&\widehat{\lambda}_{H}\\=&( b_{H}(K)\otimes H)\circ(\lambda_{H}\otimes\beta_{H})\circ\delta_{H}\;\footnotesize\textnormal{(by \eqref{lambdatr cocom})}\\=&( b_{H}(K)\otimes H)\circ(\lambda_{H}\otimes(\alpha_{H}\circ T_{H}))\circ\delta_{H}\;\footnotesize\textnormal{(by Theorem \ref{inversebetaBT} and \eqref{TH2=ID})}\\=&\gamma_{H}\circ c_{H,H}\circ(\lambda_{H}\otimes T_{H})\circ\delta_{H}\;\footnotesize\textnormal{(by \eqref{eq1} and \eqref{ccb})}\\=&\gamma_{H}\circ(T_{H}\otimes\lambda_{H})\circ\delta_{H}\;\footnotesize\textnormal{(by naturality of $c$ and cocommutativity of $\delta_{H}$)}.
\end{align*}
As a result,
\begin{align*}
&id_{H}\ast_{\scalebox{0.6}{\sf{BT}}} \widehat{\lambda}_{H}\\=&\mu_{H}^{\scalebox{0.6}{\sf{BT}}}\circ(H\otimes\widehat{\lambda}_{H})\circ\delta_{H}\;\footnotesize\textnormal{(by definition of $\ast_{\scalebox{0.6}{\sf{BT}}}$)}\\=&\mu_{H}\circ(H\otimes\gamma_{H})\circ(\delta_{H}\otimes(\gamma_{H}\circ(T_{H}\otimes\lambda_{H})\circ\delta_{H}))\circ\delta_{H}\;\footnotesize\textnormal{(by definition of $\mu_{H}^{\scalebox{0.6}{\sf{BT}}}$ and previous equality)}\\=&\mu_{H}\circ(H\otimes(\gamma_{H}\circ((\mu_{H}\circ(H\otimes \gamma_{H})\circ(\delta_{H}\otimes H))\otimes H)))\circ(\delta_{H}\otimes((T_{H}\otimes\lambda_{H})\circ\delta_{H}))\circ\delta_{H}\\&\footnotesize\textnormal{(by (iv) of Definition \ref{BTdef})}\\=&\mu_{H}\circ(H\otimes(\gamma_{H}\circ(\mu_{H}\otimes\lambda_{H})\circ(H\otimes(\gamma_{H}\circ(H\otimes T_{H})\circ\delta_{H})\otimes H)))\circ(\delta_{H}\otimes\delta_{H})\circ\delta_{H}\\&\footnotesize\textnormal{(by coassociativity of $\delta_{H}$)}\\=&\mu_{H}\circ(H\otimes(\gamma_{H}\circ((id_{H}\ast\lambda_{H})\otimes\lambda_{H})\circ\delta_{H}))\circ\delta_{H}\;\footnotesize\textnormal{(by (vi.4) of Definition \ref{BTdef} and coassociativity of $\delta_{H}$)}\\=&id_{H}\ast\lambda_{H}\;\footnotesize\textnormal{(by \eqref{antipode}, counit property and (v) of Definition \ref{BTdef})}\\=&\varepsilon_{H}\otimes\eta_{H}\;\footnotesize\textnormal{(by \eqref{antipode})},
\end{align*}
which implies, due to the uniqueness of the antipode for the Hopf algebra $H_{\sf{BT}}$, that $\widehat{\lambda}_{H}=T_{H}.$ Hence, $(G''\circ Q)\circ P'=\sf{id}_{\sf{cocBT^{f}}}$.
\end{proof}
\begin{corollary}
Categories $\sf{cocBT^{f}}$, $\sf{cocHBr^{f}}$ and $\sf{cocPost}\textnormal{-}\sf{Hopf}^{\star}$ are isomorphic.
\end{corollary}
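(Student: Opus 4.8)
The plan is to deduce the statement simply by chaining the two categorical isomorphisms already established. On the one hand, Theorem~\ref{iso2} provides an isomorphism $\sf{cocPost}\textnormal{-}\sf{Hopf}^{\star}\cong\sf{cocBT^{f}}$, realized by the functors $P'$ and $G''\circ Q$. On the other hand, Corollary~\ref{Cor iso cocHB cocTB} provides an isomorphism $\sf{cocHBr}\cong\sf{cocBT}$, realized by the restricted functors $F'$ and $G'$. So it suffices to observe that $F'$ and $G'$ restrict to mutually inverse functors $F''$ and $G''$ between the full subcategories of \emph{finite} objects, $\sf{cocHBr^{f}}$ and $\sf{cocBT^{f}}$, and then to compose the two isomorphisms.

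First I would check that $F'$ and $G'$ preserve finiteness of the underlying object, which is the only bookkeeping point involved. This is immediate from Definition~\ref{finite}, since ``finite'' is a property of the underlying object only: for a cocommutative brace triple $(H,\gamma_{H},T_{H})$ the Hopf brace $\mathbb{H}_{\sf{BT}}=F'((H,\gamma_{H},T_{H}))$ has the same underlying coalgebra, and hence the same underlying object $H$; symmetrically, $G'$ sends a cocommutative Hopf brace $\mathbb{H}$ to a brace triple whose underlying object is that of $H_{1}$, i.e. again $H$. As both functors act as the identity on morphisms, the restrictions $F''\colon\sf{cocBT^{f}}\to\sf{cocHBr^{f}}$ and $G''\colon\sf{cocHBr^{f}}\to\sf{cocBT^{f}}$ are well defined and, exactly as in the proof of Corollary~\ref{Cor iso cocHB cocTB}, satisfy $F''\circ G''=\sf{id}_{\sf{cocHBr^{f}}}$ and $G''\circ F''=\sf{id}_{\sf{cocBT^{f}}}$. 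Hence $\sf{cocBT^{f}}\cong\sf{cocHBr^{f}}$.

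Finally I would combine the two isomorphisms. Composing $G''\circ Q\colon\sf{cocPost}\textnormal{-}\sf{Hopf}^{\star}\to\sf{cocHBr^{f}}$ with $P'\circ G''$ (in the other direction) gives, using $P'\circ(G''\circ Q)=\sf{id}_{\sf{cocPost}\textnormal{-}\sf{Hopf}^{\star}}$, $(G''\circ Q)\circ P'=\sf{id}_{\sf{cocBT^{f}}}$ from Theorem~\ref{iso2} together with $F''\circ G''=\sf{id}$, $G''\circ F''=\sf{id}$, mutually inverse functors between $\sf{cocPost}\textnormal{-}\sf{Hopf}^{\star}$ and $\sf{cocHBr^{f}}$. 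Together with Theorem~\ref{iso2} this yields that the three categories $\sf{cocBT^{f}}$, $\sf{cocHBr^{f}}$ and $\sf{cocPost}\textnormal{-}\sf{Hopf}^{\star}$ are pairwise isomorphic, which is precisely the bottom row of the summary diagram of Section~\ref{sec3}. There is no serious obstacle here: both nontrivial isomorphisms have already been proved, and the only thing to verify is the harmless fact that the functors of Corollary~\ref{Cor iso cocHB cocTB} respect the subcategory of finite objects, which holds because they never change the underlying object.
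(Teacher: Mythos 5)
Your proposal is correct and follows exactly the paper's argument: the paper likewise deduces the corollary by composing the isomorphism of Theorem \ref{iso2} with that of Corollary \ref{Cor iso cocHB cocTB}, restricted to finite objects. The extra observation you make — that $F'$ and $G'$ preserve finiteness because they never change the underlying object — is a harmless bookkeeping point the paper leaves implicit.
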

\begin{proof}
It is a direct consequence of the previous theorem and Corollary \ref{Cor iso cocHB cocTB}.
\end{proof}
\section*{Funding}
The  authors were supported by  Ministerio de Ciencia e Innovaci\'on of Spain. Agencia Estatal de Investigaci\'on. Uni\'on Europea - Fondo Europeo de Desarrollo Regional (FEDER). Grant PID2020-115155GB-I00: Homolog\'{\i}a, homotop\'{\i}a e invariantes categ\'oricos en grupos y \'algebras no asociativas.

Moreover, José Manuel Fernández Vilaboa and Brais Ramos Pérez were funded by Xunta de Galicia, grant ED431C 2023/31 (European FEDER support included, UE).

Also, Brais Ramos Pérez was financially supported by Xunta de Galicia Scholarship ED481A-2023-023.

\bibliographystyle{amsalpha}

\end{document}